\newlength{\defbaselineskip}
\newcommand{\setlinespacing}[1]%
           {\setlength{\baselineskip}{#1 \defbaselineskip}}
\theoremstyle{plain}
\newtheorem{thm}{Theorem}[section]
\newtheorem{cor}[thm]{Corollary}
\newtheorem{lem}[thm]{Lemma}
\newtheorem{prop}[thm]{Proposition}
\theoremstyle{definition}
\newtheorem{defn}{Definition}[section]
\newtheorem{rmk}{Remark}[section]
\newcommand{\eps}{\varepsilon}
\DeclareMathOperator*{\esssup}{esssup}
\DeclareMathOperator*{\essinf}{essinf}
\newcommand{\cL}{\mathcal{L}}
\newcommand{\cB}{\mathcal{B}}
\newcommand{\cA}{\mathcal{A}}
\newcommand{\cS}{\mathcal{S}}
\newcommand{\cE}{\mathcal{E}}
\newcommand{\cU}{\mathcal{U}}
\newcommand{\cD}{\mathcal{D}}
\newcommand{\bH}{\mathbb{H}}
\newcommand{\bP}{\mathbb{P}}
\newcommand{\bR}{\mathbb{R}}
\newcommand{\bN}{\mathbb{N}}
\newcommand{\sF}{\mathscr{F}}
\newcommand{\sP}{\mathscr{P}}
\makeatletter\@addtoreset{equation}{section} \makeatother
\begin{document}

\title{Weak Solution for Fully Nonlinear  Stochastic Hamilton-Jacobi-Bellman Equations
}

\author{Jinniao Qiu\footnotemark[1]  }
\footnotetext[1]{Department of Mathematics, Humboldt-Universit\"at zu Berlin, Unter den Linden 6, 10099 Berlin, Germany. \textit{E-mail}: \texttt{qiujinn@gmail.com}. Financial support from the chair Applied Financial Mathematics  is gratefully acknowledged. The author would like to thank Professor Shanjian Tang for the kind encouragement and helpful discussions.}

%
%

\maketitle

\begin{abstract}
This paper is concerned with the stochastic Hamilton-Jacobi-Bellman equation with controlled leading coefficients, which is a type of fully nonlinear backward stochastic partial differential equation (BSPDE for short). In order to formulate the  weak solution for such kind of BSPDEs, a class of regular random parabolic potentials are introduced in the backward stochastic framework. The existence and uniqueness of  weak solution is proved, and for the \textit{partially} non-Markovian case, we obtain the associated gradient estimate. As a byproduct, the existence and uniqueness of solution for a class of \textit{degenerate} reflected BSPDEs is discussed as well.
\end{abstract}

{\bf Mathematics Subject Classification (2010):} 60H15, 49L20, 93E20, 35D30

{\bf Keywords:} stochastic Hamilton-Jacobi-Bellman equation, weak solution, non-Markovian control, potential, backward stochastic partial differential equation

\section{Introduction}
Let $(\Omega,\sF,\{\sF_t\}_{t\geq0},\bP)$ be a complete filtered probability space on which is defined an $m$-dimensional Wiener process $W=\{W_t:t\in[0,\infty)\}$ such that $\{\sF_t\}_{t\geq0}$ is the natural filtration generated by $W$ and augmented by all the
$\bP$-null sets in $\sF$. We denote by $\sP$ the $\sigma$-algebra of the predictable sets on $\Omega\times[0,T]$ associated with $\{\sF_t\}_{t\geq0}$. Consider the following fully nonlinear BSPDE:
\begin{equation}\label{SHJB}
  \left\{\begin{array}{l}
  \begin{split}
  -du(t,x)=\,&\displaystyle 
  \essinf_{\sigma\in U} \bigg\{\text{tr}\left(\frac{1}{2}\sigma \sigma' D^2 u+\sigma D\psi\right)(t,x)
        +f(t,x,\sigma)
                \bigg\} \,dt-\psi(t,x)\, dW_{t},\\ &\displaystyle
            \quad
                     (t,x)\in Q:=[0,T]\times \bR^d;\\
    u(T,x)=\, &G(x), \quad x\in\bR^d.
    \end{split}
  \end{array}\right.
\end{equation}
Here and in the following $U$ is a nonempty bounded subset of $R^{d\times m}$,  $T\in(0,\infty)$  is a fixed deterministic terminal time, and $D$ and $D^2$ {denote} respectively the gradient operator and  the second-order differential operator. A solution of BSPDE (\ref{SHJB}) is a random couple $(u,\psi)$ defined on $\Omega\times[0,T]\times\bR^d$ such that   (\ref{SHJB}) holds in the sense of Definition \ref{def-weak-sltn-HJB} below.

The fully nonlinear BSPDE like \eqref{SHJB} is also called stochastic Hamilton-Jacobi-Bellman (HJB) equation, which was first introduced by Peng \cite{Peng_92} to characterize the value function for the stochastic optimal control problem of non-Markovian type. For the utility maximization with habit formation, a specific fully nonlinear stochastic HJB equation was formulated by Englezos and Karatzas \cite{EnglezosKaratzas09} and the value function was verified to be its classical solution. The study of linear BSPDEs  dates back to  about thirty years ago (see Bensoussan
\cite{Bensousan_83} and Pardoux \cite{Pardoux1979}). They arise in many applications of probability theory and stochastic processes, for
instance in the nonlinear filtering and stochastic control theory for processes with incomplete information, as an adjoint equation of the Duncan-Mortensen-Zakai filtration equation (for instance, see \cite{Bensousan_83,Hu_Ma_Yong02,Hu_Peng_91,Tang_98,Zhou_93}). The representation relationship between forward-backward stochastic differential equations and BSPDEs yields the stochastic Feynman-Kac formula (see \cite{Hu_Ma_Yong02,ma1999linear,QiuTangYou-SPA-2011}). In addition, as the obstacle problems of BSPDEs, the reflected BSPDE arises as the HJB equation for the optimal stopping problems (see \cite{ChangPangYong-2009,Oksend-Sulem-Zhang-2011,QiuWei-RBSPDE-2013,Tang-Yang-2011}).

The linear and semilinear BSPDEs have been extensively studied, we refer to \cite{DuQiuTang10,DuTang2010,DuTangZhang-2013,Hu_Ma_Yong02,Hu_Peng_91,ma2012non,ma1999linear,Tang-Wei-2013} among many others. For the weak solutions and associated local behavior analysis for general quasi-linear BSPDEs, see \cite{QiuTangBDSDES2010,QiuTangMPBSPDE11}, and we refer to \cite{GraeweHorstQui13,Horst-Qiu-Zhang-14} for BSPDEs with singular terminal conditions. However, for the fully nonlinear case, there are few results on the stochastic HJB equations, even for the simplified cases like \eqref{SHJB}. The existence and uniqueness of solution for stochastic HJB equations with controlled leading coefficients is still an open problem, which is claimed
in Peng's plenary lecture of ICM 2010 (see \cite{peng1999open,peng2011backward}).


Put
\begin{align*}
\mu^{\sigma}(dt,dx)=\bigg( &\text{tr}\left(\frac{1}{2}\sigma \sigma' D^2 u+\sigma D\psi\right)(t,x)
        +f(t,x,\sigma)\\
        &-\essinf_{\bar\sigma\in U} \bigg\{\text{tr}\left(\frac{1}{2}\bar\sigma \bar\sigma' D^2 u+\bar\sigma D\psi\right)(t,x)
        +f(t,x,\bar\sigma)
                \bigg\}\bigg)dtdx.
\end{align*}
Then BSPDE \eqref{SHJB} reads
\begin{equation}\label{SHJB-sigm}
  \left\{\begin{array}{l}
  \begin{split}
  -du(t,x)+\mu^{\sigma}(dt,x)=\,&\displaystyle 
  \bigg\{\text{tr}\left(\frac{1}{2}\sigma \sigma' D^2 u+\sigma D\psi\right)(t,x)
        +f(t,x,\sigma)
                \bigg\} \,dt-\psi(t,x)\, dW_{t},\\ &\displaystyle
            \quad
                     (t,x)\in Q;\\
    u(T,x)=\, &G(x), \quad x\in\bR^d.
    \end{split}
  \end{array}\right.
\end{equation}
Heuristically, $\mu^{\sigma}(dt,dx)$ can be seen as a random measure and if the family of triples $(u,\psi,\mu^{\sigma})$ satisfying BSPDE \eqref{SHJB-sigm} such that the infimum of family $\{\mu^{\sigma}\}$ indexed by $\sigma$ vanishes, then one can conjecture that  $(u,\psi)$ should be a weak solution for BSPDE \eqref{SHJB} in some sense. For the Markovian case where the coefficients $f$ and $G$ are deterministic functions, BSPDE \eqref{SHJB} becomes a classical \textit{deterministic}  HJB equation and a similar idea was conjectured by Lions \cite{lions-1983-HJB}, but to the best of our knowledge, the only existing partial result along this line owes to Coron and Lions \cite{coron-lions-1986-remark} for the one-dimensional elliptic case.

To formulate the weak solution, we characterize first the random measure $\mu^{\sigma}$. Inspired by the recent work on BSPDEs with random measures by Qiu and Wei \cite{QiuWei-RBSPDE-2013}, and incorporating the degenerateness of BSPDE \eqref{SHJB-sigm}, we introduce a class of regular parabolic potentials  in the backward stochastic framework and identify the measure $\mu^{\sigma}$ as the corresponding regular random Radon measure. Such regular potential is  equivalently described via backward stochastic differential equation (BSDE) and in a variational way respectively. Furthermore, a monotonic convergence theorem is proved, the regular potential is further characterized as its own Snell envelope, and as a byproduct, the existence and uniqueness of solution for a class of \textit{degenerate} reflected BSPDEs is obtained.   These results are presented in Section 3.

Basing on the generalized potential theory, we give the definition and prove the existence and uniqueness of weak solution for BSPDE \eqref{SHJB} (see Theorem \ref{thm-EU-main} for the main result). For the partially non-Markovian case where the randomness of the coefficients $f$ and $G$ is from the filtration $\{\tilde\sF_t\}_{t\in[0,T]}$ generated by $\tilde{W}:=(W^1,\dots,W^{m_0})$ ($m_0<m$) that is part of the Wiener process $W$, the solution $(u,\psi)$ is verified to be only adapted to $\{\tilde\sF_t\}$ and the gradient estimates are obtained. The reader can find such main results in Section 4 and a short comment on generalizations in Section 5. 

In addition, we set notations and list the standing assumptions in Section 2, and in the appendix, we recall the generalized  It\^o-Wentzell formula by  Krylov \cite{Krylov_09} and the existence and uniqueness of solution for a class of Banach space-valued BSDEs, from which the existence and uniqueness of solution for a class of degenerate BSPDEs is derived.



\section{Preliminaries}

Denote by $|\cdot|$ the norm in  Euclidean spaces.
For each $l\in \mathbb{N}^+$ and domain $\Pi\subset \bR^l$, denote by $C_c^{\infty}(\Pi)$ the space of infinitely differentiable functions with compact supports in $\Pi$. We write $C_c^{\infty}:=C_c^{\infty}(\bR^l)$ when there is no confusion on the dimension. In this work, we shall use
$\mathcal{D}_T:=C_c^{\infty}(\bR)\otimes C_c^{\infty}(\bR^d)$ as the space of test functions in the definition of weak solutions for BSPDEs. The Lebesgue measure in $\bR^d$ will be denoted by $dx$. $L^2(\bR^d)$ ($L^2$ for short) is the usual Lebesgue integrable space with scalar product and norm defined
$$
\langle \phi,\,\psi\rangle=\int_{\bR^d}\phi(x)\psi(x)dx,\quad \|\phi\|=\langle\phi,\,\phi\rangle^{1/2},\,\,\forall
\phi,\psi\in L^2.
$$
In addition, for each $(n,p)\in\bR\times (1,\infty)$ we define  the $n$-th order Bessel potential space $(H^n_p,\|\cdot\|_{n,p})$ as usual (see Appendix \ref{sec:Banch-BSDE}). For convenience, we shall also use $\langle \cdot,\,\cdot\rangle$ to denote the duality between $(H^n_p)^k$ and $(H^{-n}_q)^k$ ($k\in\bN^+,\,n\in\bR,\,\frac{1}{p}+\frac{1}{q}=1$) as well as that between the Schwartz function space $\mathscr{D}$ and  $C_c^{\infty}$.

Let $V$ be a Banach space equipped with norm $\|\cdot\|_V$. For $p\in[1,\infty]$, $\cS ^p (V)$ is the set of all the $V$-valued,
 $(\sF_t)$-adapted and continuous processes $\{X_{t}\}_{t\in [0,T]}$ such
 that
{\small $$\|X\|_{\cS ^p(V)}:= \left\|\sup_{t\in [0,T]} \|X_t\|_V\right\|_{L^p(\Omega,\sF,\bP)}< \infty.$$
}
 Denote by $\mathcal{L}^p(V)$ the totality of all  the $V$-valued,
 $(\sF_t)$-adapted processes $\{X_{t}\}_{t\in [0,T]}$ such
 that
 {\small
 $$
 \|X\|_{\mathcal{L}^p(V)}:=\left\| \bigg(\int_0^T \|X_t\|_V^2\,dt\bigg)^{1/2} \right\|_{L^p(\Omega,\sF,\bP)}< \infty.
 $$
 }
Obviously, $(\cS^p(V),\,\|\cdot\|_{\cS^p(V)})$ and $(\mathcal{L}^p(V),\|\cdot\|_{\mathcal{L}^p(V)})$
are Banach spaces.

By convention, we treat elements of spaces like $\cS^2(H^n_2)$ and $\cL^2(H^n_2)$ as functions rather than distributions or classes of equivalent functions, and if a function of such class admits a version with better properties, we always denote this version by itself. For example, if $u\in \cL^2(H^n_2)$ and $u$ admits a version lying in $\mathcal{S}^2(H^n_2)$, we always adopt the modification $u\in \cL^2(H^n_2)\cap \cS^2(H^n_2)$.
\medskip

Consider BSPDE~(\ref{SHJB}). We define the following assumption.

\bigskip\medskip
   $({\mathcal A} 1)$ \it $G\in L^{2}(\Omega,\sF_T;L^2)$ is nonnegative and the   random function
$
  f:~\Omega\times[0,T]\times\bR^d\times U\rightarrow[0,\infty)
$
is $\sP\otimes\cB(\bR^d)\otimes\cB(U)$-measurable. There exist $\alpha\in(0,1]$ and $L>0$ and some $g\in \cL^2(L^2)$   such that for all $x_1,x_2\in \bR^d$, $v\in U$   and $(\omega,t)\in \Omega\times[0,T]$,
   $ f(\omega,t,x_1,v)
       \leq\,
       g(\omega,t,x_1)$
       and
   \begin{equation*}
     \begin{split}
       &|f(\omega,t,x_1,v)-f(\omega,t,x_2,v)|+|G(\omega,x_1)-G(\omega,x_2)|
       \leq\, L|x_1-x_2|^{\alpha}.
     \end{split}
   \end{equation*}\rm

Note that in this work, $f(t,x,v)$ does not necessarily depend continuously on the \textit{control} $v$. In view of BSPDE \eqref{SHJB}, we also note that the nonnegativity of $G$ and $f$ is assumed for simplicity and that it can be replaced equivalently by the lower-boundedness.

%
%
%

\section{$\sigma$-quasi-continuity and regular $\sigma$-potential}
Throughout this work, denote by $\mathcal{U}$ the set of all the $U$-valued and ${\sF}_t$-adapted processes and for each $\sigma\in \cU$,
\begin{align}
X^{\sigma}_t:=\int_0^t\sigma_s\,dW_s,\quad t\in[0,T].
\end{align}

In this section, we fix some $\sigma\in\cU$. Obviously, one has
\begin{align}
E\int_0^T\|h(t,\cdot+X_t^{\sigma})\|^2dt=\|h\|^2_{\cL^2(L^2)} \leq T\int_{\bR^d}E\sup_{t\in[0,T]}|h(t,x+X^{\sigma}_t)|^2\,dx.\label{eq-equiv-norm}
\end{align}
\subsection{$\sigma$-quasi-continuity }
\begin{defn}
  Random function $u:\Omega\times [0,T]\times \bR^d\rightarrow \bar{\bR}$ is said to be $\sigma$-quasi-continuous provided that for each $\eps >0$, there exists a predictable random set $D^{\eps}\subset \Omega\times [0,T]\times \bR^d$ such that $\mathbb{P}$-a.s. the section $D^{\eps}_{\omega}$ is open and $u(\omega,\cdot,\cdot)$ is continuous on its complement $(D^{\eps}_{\omega})^{c}$ and
  $$
  {\mathbb{P}}\otimes{dx}
  \left((\omega,x)| \exists t\in [0,T]\,\,\textrm{s.t.}\,\,(\omega,t,x+X^{\sigma}_t(\omega))\in D^{\eps} \right)\leq \eps.
  $$
\end{defn}

  If $u$ is $\sigma$-quasi-continuous, we can check that the process $\{u(t,x+X^{\sigma}_t)\}_{t\in[0,T]}$ has continuous trajectories, ${\bP}\otimes{dx}$-a.e.   In order to verify the $\sigma$-quasi-continuity of some random function, we always use the following lemma on the closeness.

\begin{lem}\label{lem-S-quasi-contin}
  Let $\{u_n\}_{n\in\bN^+}$ be a sequence of $\sigma$-quasi-continuous processes.
  Assume that  there exists random function $u$ such that for some $p\in(0,\infty)$
  \[
  E\int_{\bR^d}\sup_{t\in[0,T]}|u(t,x+X^{\sigma}_{t})|^p\,dx<\infty
  \quad \text{and}\quad
  \lim_{n\rightarrow\infty}E\int_{\bR^d}\sup_{t\in[0,T]}|u(t,x+X^{\sigma}_{t})-u_n(t,x+X^{\sigma}_{t})|^p\,dx=0.\]
  Then $u$ is $\sigma$-quasi-continuous.
\end{lem}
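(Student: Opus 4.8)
\emph{Proof sketch.} The plan is to run the classical ``a limit of quasi-continuous functions is quasi-continuous'' argument, transplanted into the present backward/shifted framework, with the role of capacity played by the set function
\[
\mathrm{cap}(E):=\bP\otimes dx\big(\{(\omega,x):\ \exists\,t\in[0,T],\ (\omega,t,x+X^{\sigma}_t(\omega))\in E\}\big),\qquad E\subset\Omega\times[0,T]\times\bR^d\ \text{predictable}.
\]
Note that the projection condition in the definition of $\sigma$-quasi-continuity is exactly $\mathrm{cap}(D^{\eps})\le\eps$, and that, writing $\|v\|_{*}^{p}:=E\int_{\bR^d}\sup_{t\in[0,T]}|v(t,x+X^{\sigma}_t)|^{p}\,dx$, the hypothesis says $\|u\|_{*}<\infty$ and $\|u-u_n\|_{*}\to0$, so $\{u_n\}$ is $\|\cdot\|_{*}$-Cauchy. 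First I would pass to a subsequence (relabelled $u_n$) with $\|u_n-u_{n+1}\|_{*}^{p}\le 2^{-n(p+1)}$. Markov's inequality (valid for any $p\in(0,\infty)$) then yields $\mathrm{cap}(W_n)\le 2^{-n}$ for the superlevel sets $W_n:=\{(\omega,t,y):|u_n(t,y)-u_{n+1}(t,y)|>2^{-n}\}$, which are predictable because the $u_n$ admit predictable jointly measurable representatives.

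Next I would upgrade each $W_n$ to an \emph{open}-sectioned predictable set of comparable capacity. Let $D^{(n)}$ witness the $\sigma$-quasi-continuity of $u_n$, taken with $\mathrm{cap}(D^{(n)})\le 2^{-n}$. On the closed section $\big(D^{(n)}_\omega\cup D^{(n+1)}_\omega\big)^c$ both $u_n$ and $u_{n+1}$ are continuous, hence so is $|u_n-u_{n+1}|$, so $W_n$ meets this closed set in a relatively open set; expressing that set as $O_{n,\omega}\cap(\cdots)^c$ with $O_{n,\omega}$ open (chosen by a canonical distance-function construction to remain predictable in $\omega$) gives $\hat D_n:=O_n\cup D^{(n)}\cup D^{(n+1)}=W_n\cup D^{(n)}\cup D^{(n+1)}$, which is predictable with open sections and $\mathrm{cap}(\hat D_n)\le \mathrm{cap}(W_n)+\mathrm{cap}(D^{(n)})+\mathrm{cap}(D^{(n+1)})\le 3\cdot 2^{-n}$. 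Fixing $\eps>0$ and choosing $m$ with $3\cdot2^{1-m}\le\eps/2$, set $G:=\bigcup_{n\ge m}\hat D_n$, an open-sectioned predictable set with $\mathrm{cap}(G)\le\eps/2$. Since $(G_\omega)^c\subset\bigcap_{n\ge m}(D^{(n)}_\omega)^c$ and $(G_\omega)^c\subset\bigcap_{n\ge m}(W_n)_\omega^c$, on $(G_\omega)^c$ every $u_n$ $(n\ge m)$ is continuous and $|u_n-u_{n+1}|\le2^{-n}$, so $(u_n)$ is uniformly Cauchy there and converges uniformly to a function $\bar u(\omega,\cdot,\cdot)$ that is continuous on $(G_\omega)^c$.

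Finally I would identify $\bar u$ with the given $u$ and conclude. From $\|u-u_n\|_{*}\to0$, a further subsequence satisfies $\sup_t|u-u_n|(t,x+X^{\sigma}_t)\to0$ for $\bP\otimes dx$-a.e.\ $(\omega,x)$; let $N_\omega\subset\bR^d$ be the a.s.\ Lebesgue-null set of these exceptional $x$ together with those for which $\sup_t|u(t,x+X^{\sigma}_t)|=\infty$. Choose (by a measurable selection of open supersets of the null set $N$) an open set $O'_\omega\supseteq N_\omega$ with $|O'_\omega|\le\eps/2$, predictably in $\omega$, and put $R_\omega:=\Phi_\omega([0,T]\times O'_\omega)$, where $\Phi_\omega(t,x):=(t,x+X^{\sigma}_t(\omega))$ is a homeomorphism of $[0,T]\times\bR^d$; then $R$ is open-sectioned, predictable, and $\mathrm{cap}(R)=E|O'_\omega|\le\eps/2$. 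Set $D^{\eps}:=G\cup R$. For $(s,y)\in(D^{\eps}_\omega)^c$ one has $x:=y-X^{\sigma}_s(\omega)\notin O'_\omega\supseteq N_\omega$, so along the path through $x$ the value converges both to $u(s,y)$ (as $x\notin N_\omega$) and to $\bar u(s,y)$ (as $(s,y)\in(G_\omega)^c$), forcing $u(s,y)=\bar u(s,y)$; hence $u(\omega,\cdot,\cdot)=\bar u(\omega,\cdot,\cdot)$ is continuous on $(D^{\eps}_\omega)^c$, while $D^{\eps}$ is predictable, open-sectioned, with $\mathrm{cap}(D^{\eps})\le\eps$. This is precisely the $\sigma$-quasi-continuity of $u$. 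The main obstacle I anticipate is not the analytic core but the measurability bookkeeping: keeping the exceptional sets simultaneously \emph{predictable} and \emph{open}-sectioned with controlled $\mathrm{cap}$ throughout, i.e.\ the predictable selection of open supersets of the superlevel sets $W_n$ and of the null set $N$, since the naive superlevel sets are predictable but not open and naive open enlargements need not be predictable.
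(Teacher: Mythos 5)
Your proposal is correct and follows essentially the same route as the paper's proof: pass to a rapidly convergent subsequence, apply Markov's inequality to the superlevel sets of successive differences, union these with the exceptional sets witnessing the quasi-continuity of each $u_n$, and conclude by uniform convergence of continuous functions on the complement. You are somewhat more explicit than the paper about two bookkeeping points it glosses over --- arranging the superlevel sets to have open sections and identifying the uniform limit with $u$ via a.e.\ convergence along the shifted paths --- but this is a refinement of the same argument, not a different one.
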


\begin{proof}
For each $\delta\in(0,1)$ and $n\in\bN^+$, since $u_n$ is $\sigma$-quasi-continuous, there exists $D^{\delta_n}$ whose section $D^{\delta_n}_{\omega}$ is open  such that $u_n(\omega,\cdot,\cdot)$ is continuous on its complement $(D^{\delta_n}_{\omega})^c$ and
$$
{\bP}\otimes{dx}\left(
(\omega,x)| \exists t\in [0,T]\,\,\textrm{s.t.}\,\,(\omega,t,x+X_t^{\sigma}(\omega))\in D^{\delta_n}
\right)\leq \frac{\delta}{2^n}.
$$
 Put $D^{\delta}=\cup_{n} D^{\delta_n}$.
Choosing a subsequence if necessary, we assume
\[
E\int_{\bR^d}\sup_{t\in[0,T]}|u_{n+1}(t,x+X^{\sigma}_t)-u_n(t,x+X^{\sigma}_t)|^p\,dx<\frac{1}{2^n}.
\]
For each $\eps>0$ and $k,n\in\bN^+$, set $F^n=\{|u_n-u_{n-1}|>\eps\}$ and $D^k=\cup_{n\geq k}F^n$. Then
\begin{align*}
&\eps^{p}{\bP}\otimes{dx}\left((\omega,x)| \exists t\in [0,T]\,\,\textrm{s.t.}\,\,(\omega,t,x+X^{\sigma}_t(\omega))\in F^n   \right)
\\
\leq\,&
E\int_{\bR^d}\sup_{t\in[0,T]}|u_n(t,x+X^{\sigma}_t)-u_{n+1}(t,x+X^{\sigma}_t)|^p\,dx\leq \frac{1}{2^n}.
\end{align*}
Taking $\eps=\frac{1}{n^{2}}$, we get the continuity of $u(\omega,\cdot,\cdot)$ on the \textit{closed} complement of the section $D^k_{\omega}\cup D^{\delta}_{\omega}$ and
\begin{align*}
{\bP}\otimes {dx}\left((\omega,x)| \exists t\in [0,T]\,\,\textrm{s.t.}\,\,(\omega,t,x+X^{\sigma}_t(\omega))\in D^k \cup D^{\delta}  \right)
\leq
\delta+\sum_{n=k}^{\infty} \frac{n^{2p}}{2^n},
\end{align*}
which implies the $\sigma$-quasi-continuity of $u$.
\end{proof}
We are going to study the $\sigma$-quasi-continuity of weak solution for BSPDEs. Consider the following BSPDE:
\begin{equation}\label{SHJB-linr-sigm}
  \left\{\begin{array}{l}
  \begin{split}
  -du(t,x)=\,&\displaystyle 
  \left[
  \text{tr}\left(\frac{1}{2}\sigma \sigma' D^2 u+\sigma D\psi\right)(t,x)
        +f(t,x)
                \right] \,dt
           -\psi(t,x)\, dW_{t}, \quad
                     (t,x)\in Q;\\
    u(T,x)=\, &\Psi(x), \quad x\in\bR^d,
    \end{split}
  \end{array}\right.
\end{equation}
with $f\in \mathcal{L}^2(L^2)$, $\Psi\in L^2(\Omega,\sF_T;L^2)$. We recall that a weak solution of BSPDE \eqref{SHJB-linr-sigm} is a pair of processes $(u,\psi)\in \cS^2(L^2)\times \mathcal{L}^2((H^{-1}_2)^m)$ such that for each test function $\varphi\in C_c^{\infty}$ and any  $t\in[0,T]$, we have
\begin{equation*}
  \begin{split}
    &\langle u(t),\,\varphi \rangle \\
    =\,&
    \langle \Psi,\varphi \rangle+\!\!\int_t^T\!\!\left[\langle f(s),\,\varphi \rangle
    +\frac{1}{2}\langle  u(s),\, \text{tr}\left(\sigma\sigma'D^2\varphi\right)\rangle -\langle \sigma\psi(s),\, D\varphi \rangle \right]\,ds-\int_t^T\langle \varphi,\,\psi(s)\,dW_s\rangle,\ \text{a.s.},
  \end{split}
\end{equation*}
where, similar to \cite[Remark 2.1]{QiuTangMPBSPDE11}, the test function space $C_c^{\infty}(\bR^d)$ can be replaced by $\mathcal{D}_T$, i.e., for each test function $\varphi\in \mathcal{D}_T$ and any  $t\in[0,T]$,
\begin{equation*}
  \begin{split}
    &\langle u(t),\,\varphi(t) \rangle +\!\!\int_t^T\!\langle u(s),\,\partial_s \varphi(s)   \rangle  \,ds\\
    =\,&
    \langle \Psi,\varphi(T) \rangle+\!\!\int_t^T\!\left[\langle f,\,\varphi \rangle+\frac{1}{2}\langle  u,\, \text{tr}\left(\sigma\sigma'D^2\varphi\right)\rangle -\langle \sigma\psi,\, D\varphi \rangle \right](s)\,ds
    -\int_t^T\langle \varphi(s),\,\psi(s)\,dW_s\rangle ,\,\text{a.s.}
  \end{split}
\end{equation*}

Setting
$$
(\bar{u}(s,x),\bar{\psi}(s,x))=(u(s,X_s^{\sigma}+x),(\psi+ Du \sigma)(s,X_s^{\sigma}+x)),\quad (s,x)\in Q,
$$
from Theorem \ref {Ito-Wentzell} we conclude that $(u,\psi)$ is a solution of BSPDE \eqref{SHJB-linr-sigm} if and only if $(\bar{u},\bar{\psi})$ satisfies the following trivial one:
\begin{equation}\label{SHJB-linr-sigm-trvl}
  \left\{\begin{array}{l}
  \begin{split}
  -d\bar{u}(t,x)=\,&\displaystyle 
        f(t,x+X_t^{\sigma}) \,dt
           -\bar{\psi}(t,x)\, dW_{t}, \quad
                     (t,x)\in Q;\\
    \bar{u}(T,x)=\, &\Psi(x+X_T^{\sigma}), \quad x\in\bR^d.
    \end{split}
  \end{array}\right.
\end{equation}
Therefore, BSPDE \eqref{SHJB-linr-sigm} admits a unique solution by Proposition \ref{prop-banah-BSDE} and
{\small
\begin{align}
\|u\|^2_{\cS^2(L^2)}+\|\psi+Du\sigma\|^2_{\cL^2((L^2)^m)}+E\int_{\bR^d} \sup_{s\in[0,T]}|u(s,X_s^{\sigma}+x)|^2\,dx
\leq C\left\{
E\|\Psi\|^2+\|f\|^2_{\cL^2(L^2)}
\right\}.
\end{align}
}
Moreover, the process $\{u(t,x+X^{\sigma}_t)\}_{t\in[0,T]}$ has continuous trajectories, ${\bP}\otimes{dx}$-a.e.
%
%
%
In what follows, we denote the unique solution of BSPDE \eqref{SHJB-linr-sigm}   associated with $(\sigma,f,\Psi)$ by
$$(u,\psi):=\mathbb{S}(\sigma,f,\Psi). $$
 As an immediate consequence of  Theorem \ref {Ito-Wentzell}, we give without any proof the following
\begin{prop}\label{prop-BSPDE-Flow}
  Given $(u,\psi)=\mathbb{S}(\sigma,f,\Psi)$, one has the following stochastic representations, for $0\leq t\leq s\leq T$,
  \begin{equation*}
    \begin{split}
      &u(t,x+X^{\sigma}_t)+\!\int_t^s\!\!\!
      (\psi+ Du\sigma)(\tau,x+X^{\sigma}_{\tau})\,dW_{\tau}
      =\,u(s,x+X^{\sigma}_{s})+\!\!\int_t^s\!\!f(\tau,x+X^{\sigma}_{\tau})\,d\tau,\ {\bP}\otimes{dx}\text{-a.e.}
    \end{split}
  \end{equation*}
  with $\psi+Du\sigma\in\cL^2((L^2)^m)$.
\end{prop}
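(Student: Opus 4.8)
The work is essentially done by the equivalence recorded just above the statement: by the generalized It\^o--Wentzell formula (Theorem \ref{Ito-Wentzell}), the pair $(u,\psi)=\mathbb{S}(\sigma,f,\Psi)$ solves \eqref{SHJB-linr-sigm} if and only if the shifted pair $(\bar u,\bar\psi)$, defined by $\bar u(s,x)=u(s,X^{\sigma}_s+x)$ and $\bar\psi(s,x)=(\psi+Du\sigma)(s,X^{\sigma}_s+x)$, solves the spatially decoupled BSPDE \eqref{SHJB-linr-sigm-trvl}, whose well-posedness is furnished by Proposition \ref{prop-banah-BSDE}. The plan is to read the asserted representation directly off the integral form of \eqref{SHJB-linr-sigm-trvl}.

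First I would note that \eqref{SHJB-linr-sigm-trvl} contains no spatial differential operator, so that for $\bP\otimes dx$-a.e.\ fixed $x$ it reduces to a scalar backward equation driven by $W$ with generator $f(\tau,x+X^{\sigma}_{\tau})$ and terminal value $\Psi(x+X^{\sigma}_T)$. Writing its integral form over an arbitrary subinterval $[t,s]\subset[0,T]$ gives
\begin{equation*}
\bar u(t,x)=\bar u(s,x)+\int_t^s f(\tau,x+X^{\sigma}_{\tau})\,d\tau-\int_t^s\bar\psi(\tau,x)\,dW_{\tau},\quad\bP\otimes dx\text{-a.e.}
\end{equation*}
Substituting the definitions of $\bar u$ and $\bar\psi$ and transposing the stochastic integral to the left-hand side yields exactly the claimed identity. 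The integrability statement $\psi+Du\sigma\in\cL^2((L^2)^m)$ is precisely the term controlled in the a priori estimate displayed before the proposition, and it is what renders the stochastic integral $\int_t^s(\psi+Du\sigma)(\tau,x+X^{\sigma}_{\tau})\,dW_{\tau}$ well defined.

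The one step that deserves attention, and the main (if modest) obstacle, is justifying the passage from the weak, distributional-in-$x$ formulation of \eqref{SHJB-linr-sigm-trvl} to the pointwise-in-$x$ integral identity above, holding $\bP\otimes dx$-a.e.\ and simultaneously for all $0\le t\le s\le T$. To handle this I would use the continuity of the trajectory $\{u(t,x+X^{\sigma}_t)\}_{t\in[0,T]}$, which holds $\bP\otimes dx$-a.e.\ as already established, together with the $\cS^2(L^2)$-regularity of $u$ and the $\cL^2((L^2)^m)$-regularity of $\psi+Du\sigma$. These regularities ensure that both sides admit a common continuous-in-$t$ version for $\bP\otimes dx$-a.e.\ $(\omega,x)$, so that the identity is a genuine almost-everywhere statement rather than merely a family of fixed-time equalities. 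Since no new estimate beyond Theorem \ref{Ito-Wentzell} and the displayed bound is required, the representation follows immediately.
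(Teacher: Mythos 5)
Your argument is correct and is precisely the route the paper intends: the paper states this proposition without proof as an immediate consequence of the It\^o--Wentzell formula (Theorem \ref{Ito-Wentzell}), having just established the equivalence between \eqref{SHJB-linr-sigm} and the spatially decoupled equation \eqref{SHJB-linr-sigm-trvl}, whose solution via Proposition \ref{prop-banah-BSDE} satisfies the pointwise-in-$x$ BSDE with the stated $\bP\otimes dx$-a.e.\ continuous trajectories. Your extra care about upgrading the weak formulation to the a.e.-in-$x$ integral identity is exactly what the $\bH^{0}_{2,\infty}$-regularity in Proposition \ref{prop-banah-BSDE} supplies, so nothing is missing.
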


\begin{prop}\label{prop-quasi-cont-BSPDE}
  For $(u,\psi)=\mathbb{S}(\sigma,f,\Psi)$, $u$ is $\sigma$-quasi-continuous.
\end{prop}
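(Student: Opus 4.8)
The plan is to establish $\sigma$-quasi-continuity of $u$ by approximating it by a sequence of manifestly $\sigma$-quasi-continuous solutions and then invoking the closeness Lemma \ref{lem-S-quasi-contin}. The key structural fact I would exploit is that the transformation $\bar u(s,x)=u(s,X^\sigma_s+x)$ turns the degenerate BSPDE \eqref{SHJB-linr-sigm} into the trivial backward equation \eqref{SHJB-linr-sigm-trvl}, whose solution has an explicit conditional-expectation representation. Indeed, from Proposition \ref{prop-BSPDE-Flow} we have, $\bP\otimes dx$-a.e.,
\begin{equation*}
  u(t,x+X^{\sigma}_t)=E\left[\Psi(x+X^{\sigma}_T)+\int_t^T f(\tau,x+X^{\sigma}_{\tau})\,d\tau\,\Big|\,\sF_t\right].
\end{equation*}
This representation is the engine of the proof: smoothing the data $(f,\Psi)$ will smooth $\bar u$, and hence $u$.

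First I would reduce to smooth data. Choose sequences $f_n\in\cL^2(L^2)$ and $\Psi_n\in L^2(\Omega,\sF_T;L^2)$ with $f_n(\omega,t,\cdot),\Psi_n(\omega,\cdot)$ smooth (e.g.\ spatial mollification) such that $\|f_n-f\|_{\cL^2(L^2)}\to0$ and $E\|\Psi_n-\Psi\|^2\to0$. Let $(u_n,\psi_n)=\mathbb{S}(\sigma,f_n,\Psi_n)$. The a priori estimate displayed just before the definition of $\mathbb{S}$, applied to the differences $(u-u_n,\psi-\psi_n)=\mathbb{S}(\sigma,f-f_n,\Psi-\Psi_n)$, gives
\begin{equation*}
  E\int_{\bR^d}\sup_{s\in[0,T]}|u(s,X^{\sigma}_s+x)-u_n(s,X^{\sigma}_s+x)|^2\,dx\leq C\left\{E\|\Psi-\Psi_n\|^2+\|f-f_n\|^2_{\cL^2(L^2)}\right\}\to0.
\end{equation*}
Together with $E\int_{\bR^d}\sup_{s}|u(s,X^{\sigma}_s+x)|^2\,dx<\infty$ (also from that estimate), this verifies the two hypotheses of Lemma \ref{lem-S-quasi-contin} with $p=2$. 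Hence it suffices to prove that each $u_n$ is $\sigma$-quasi-continuous.

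For the smooth-data case, I would use the representation above for $u_n$. Because $f_n$ and $\Psi_n$ are smooth in $x$ and uniformly controlled, I expect $\bar u_n(s,x)=u_n(s,X^\sigma_s+x)$ to have a jointly continuous modification in $(s,x)$ on $[0,T]\times\bR^d$, which would make $u_n$ $\sigma$-quasi-continuous trivially (take $D^\eps=\emptyset$). The continuity in $x$ comes from dominated convergence in the conditional expectation using the spatial Lipschitz/Hölder control on $f_n,\Psi_n$; the continuity in $s$, and joint continuity, comes from a Kolmogorov-type moment estimate on increments of the martingale $E[\,\cdot\mid\sF_s]$, exploiting the continuity of $X^\sigma$ and the boundedness of $U$. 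This is where I expect the main technical work and the main obstacle to lie: establishing joint $(s,x)$-continuity of the conditional-expectation field with enough uniformity to apply a Kolmogorov continuity criterion, given that $\Psi_n$ is only in $L^2(\Omega)$ rather than bounded, so the moment bounds must be handled carefully (possibly through a further truncation/approximation of $\Psi_n$, or by working directly with the $\cS^2$-estimate and the explicit structure of \eqref{SHJB-linr-sigm-trvl}). Once the smooth case yields genuine continuity, the closeness lemma upgrades it to $\sigma$-quasi-continuity of the original $u$.
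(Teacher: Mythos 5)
Your overall architecture is exactly the paper's: approximate $(\Psi,f)$ by regular data, use the a priori estimate for differences to get
$E\int_{\bR^d}\sup_t|u-u_n|^2(t,x+X^\sigma_t)\,dx\to0$, and conclude by Lemma \ref{lem-S-quasi-contin}. That reduction is correct. The problem is the step you yourself flag as the ``main obstacle'': proving that $u_n$ is genuinely continuous for smooth data. You leave this unproved, and the route you sketch --- a Kolmogorov continuity criterion for the conditional-expectation field $\bar u_n(s,x)=E_{\sF_s}[\Psi_n(x+X^\sigma_T)+\int_s^T f_n(\tau,x+X^\sigma_\tau)\,d\tau]$ --- is genuinely problematic: Kolmogorov's criterion in $d+1$ parameters requires an increment bound of the form $E|\bar u_n(s,x)-\bar u_n(s',x')|^p\leq C(|s-s'|+|x-x'|)^{d+1+\delta}$, and with data only in $L^2(\Omega)$ you have no access to moments of order $p>2$, so for $d\geq 2$ the exponent bookkeeping does not close without substantial truncation machinery that you do not supply. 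As written, the proof is incomplete at its central point.

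The paper closes this gap without any Kolmogorov argument. It takes $(\Psi_n,f_n)\in L^2(\Omega,\sF_T;H^k_2)\times\cL^2(H^k_2)$ with integer $k>\frac d2$ and invokes Corollary \ref{cor-degenerate-BSPDE} (the Banach-space-valued BSDE theory of Appendix \ref{sec:Banch-BSDE}, which works for arbitrary Sobolev order $n$): this yields $u_n\in\cS^2(H^k_2)$, i.e., $u_n$ is \emph{pathwise} a continuous $H^k_2$-valued process. Since $H^k_2$ embeds continuously into the H\"older space $C^\alpha$ for $0<\alpha<1\wedge(k-\frac d2)$, the map $t\mapsto u_n(\omega,t,\cdot)$ is continuous into $C^\alpha$, hence $u_n(\omega,t,x)$ is almost surely jointly continuous in $(t,x)$, and one takes $D^\eps=\emptyset$. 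The point is that the regularity is obtained at the level of the function-space-valued trajectory, where only $\cS^2$ (i.e.\ $L^2(\Omega)$) integrability is needed, rather than at the level of the real-valued random field, where higher moments would be required. If you replace your Kolmogorov step by an appeal to Corollary \ref{cor-degenerate-BSPDE} with $n=k>\frac d2$ and the Sobolev embedding, your argument becomes the paper's proof.
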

\begin{proof}
  Given integer $k>\frac{d}{2}$, the $k$-th order Sobolev space $H^k_2$ is continuously embedded into the H\"{o}lder space $C^{\alpha}$ with $0<\alpha<1\wedge (k-\frac{d}{2})$. Let $\{(\Psi_n,f_n)\}_{n\in\bN^+}\subset L^2(\Omega,\sF_T;H^k_2)\times\cL^2(H^k_2)$ be a sequence converging to $(\Psi,f)$ in $ L^2(\Omega,\sF_T;L^2)\times \cL^2(L^2)$. Set $(u_n,\psi_n)=\mathbb{S}(\sigma,f_n,\Psi_n)$. By Corollary \ref{cor-degenerate-BSPDE}, $u_n\in \cS^2(H^{k}_2)$, i.e., $u_n$ is an $H^k_2$-valued continuous process and hence $u_n(\omega,t,x)$ is almost surely continuous in $(t,x)$. Moreover,
  \begin{align}
  E\int_{\bR^d}\sup_{t\in[0,T]}|(u_n-u)(t,x+X_t^{\sigma})|^2\,dx
  \leq\,& C \left(E\|\Psi-\Psi_n\|^2+\|f_n-f\|^2_{\cL^2(L^2)}\right)\nonumber\\
  &\longrightarrow 0,\quad \text{as }n\rightarrow\infty.
  \end{align}
Hence, by Lemma \ref{lem-S-quasi-contin}, $u$ is $\sigma$-quasi-continuous.
\end{proof}

\subsection{Regular $\sigma$-potential}

For each $s\geq 0$, we define operator $P^{\sigma}_s$ on $\cL^2(L^2)$ such that for each $u\in \cL^2(L^2)$,
  \begin{equation*}
  {P}^{\sigma}_s u(t_0,x):=
  \left\{\begin{array}{l}
  \begin{split}
  \tilde{u}(t_0,x),\ &\textrm{ if }s+t_0\leq T;\\
                           0,\ &\textrm{ otherwise,}
    \end{split}
  \end{array}\right.
 \end{equation*}
where $\tilde{u}$ together with some random field $\tilde{\psi}$ constitutes the weak solution to the following BSPDE
\begin{equation*}
  \left\{\begin{array}{l}
  \begin{split}
  -d\tilde{u}(t,x)=\,&\displaystyle 
  \text{tr}\left(\frac{1}{2}\sigma \sigma' D^2 \tilde{u}+\sigma D\tilde{\psi}\right)(t,x)
                 \,dt
           -\tilde{\psi}(t,x)\, dW_{t},
                     \,\,(t,x)\in [0,t_0+s]\times\bR^d;\\
    \tilde{u}(t_0+s,x)=\, &u(t_0+s,x), \quad x\in\bR^d.
    \end{split}
  \end{array}\right.
\end{equation*}
In view of Proposition \ref{prop-BSPDE-Flow}, we have another representation for $P^{\sigma}_s$, i.e.,
\begin{align}
{P}^{\sigma}_s u(t_0,x)=E_{{\sF}_{t_0}}[u(t_0+s,x+X^{\sigma}_{t_0+s}-X^{\sigma}_{t_0})],\quad 0\leq t_0\leq t_0+s\leq T.\label{P-sgm-exptn}
\end{align}
Therefore, for any $(\hat{u},\psi)=\mathbb{S}(\sigma,f,0)$, we have
\begin{align}
\hat{u}(t,x)=E_{{\sF}_t}\int_t^Tf(s,x+X^{\sigma}_{s}-X^{\sigma}_{t})\,ds
=\int_t^TP^{\sigma}_{s-t}f(t,x)\,ds,\quad (t,x)\in Q.\label{P-BSPDE-repn}
\end{align}
Moreover, it is obvious that $\|P^{\sigma}_su(t_0,\cdot)\|\leq \|u(t_0+s,\cdot)\|$ for any $(t_0,s)\in[0,T]\times [0,\infty)$. In view of representation \eqref{P-sgm-exptn}, we have further the following
\begin{lem}\label{lem-P-smgp}
 $({P}^{\sigma}_t)_{t\geq 0}$ is a strongly continuous one-parameter contraction semigroup on $\cL^2(L^2)$.
\end{lem}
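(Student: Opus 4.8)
The plan is to verify the three defining properties of a strongly continuous contraction semigroup on $\cL^2(L^2)$: the semigroup law $P^{\sigma}_s P^{\sigma}_t = P^{\sigma}_{s+t}$ together with $P^{\sigma}_0 = \mathrm{Id}$; the contraction bound $\|P^{\sigma}_t u\|_{\cL^2(L^2)} \le \|u\|_{\cL^2(L^2)}$; and strong continuity $\|P^{\sigma}_t u - u\|_{\cL^2(L^2)} \to 0$ as $t\downarrow 0$. The first two are essentially bookkeeping built on the representation \eqref{P-sgm-exptn}, while the strong continuity is where the real work lies.

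For the semigroup law I would work entirely from \eqref{P-sgm-exptn}. Writing $v = P^{\sigma}_t u$ and using that the increment $X^{\sigma}_{t_0+s+t}-X^{\sigma}_{t_0}$ splits as $(X^{\sigma}_{t_0+s}-X^{\sigma}_{t_0}) + (X^{\sigma}_{t_0+s+t}-X^{\sigma}_{t_0+s})$, I evaluate $v(t_0+s,\cdot)$ at the $\sF_{t_0+s}$-measurable spatial argument $x + X^{\sigma}_{t_0+s}-X^{\sigma}_{t_0}$ (freezing this shift as a parameter inside the inner conditional expectation) to obtain $v(t_0+s, x+X^{\sigma}_{t_0+s}-X^{\sigma}_{t_0}) = E_{\sF_{t_0+s}}[u(t_0+s+t, x+X^{\sigma}_{t_0+s+t}-X^{\sigma}_{t_0})]$ whenever $t_0+s+t\le T$. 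Taking $E_{\sF_{t_0}}$ and invoking the tower property ($\sF_{t_0}\subseteq\sF_{t_0+s}$) collapses this to $P^{\sigma}_{s+t}u(t_0,x)$; the killing convention makes both sides vanish when $t_0+s+t>T$, and the case $s=0$ gives $P^{\sigma}_0 = \mathrm{Id}$ because $u(t_0,\cdot)$ is $\sF_{t_0}$-measurable. The contraction bound is immediate from the already noted pointwise estimate $\|P^{\sigma}_s u(t_0,\cdot)\|\le \|u(t_0+s,\cdot)\|$: squaring, integrating $t_0$ over $[0,T]$ (the integrand vanishing for $t_0>T-s$), and substituting $\tau = t_0+s$ yields $\|P^{\sigma}_s u\|^2_{\cL^2(L^2)} \le E\int_s^T \|u(\tau,\cdot)\|^2\, d\tau \le \|u\|^2_{\cL^2(L^2)}$.

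The hard part is strong continuity. Since $P^{\sigma}_t$ is a contraction, it suffices by a standard $3\eps$ argument to prove $\|P^{\sigma}_t u - u\|_{\cL^2(L^2)}\to 0$ for $u$ ranging over a dense subset, and I would take the subset spanned by products $u(t_0,x)=\eta_{t_0}\phi(x)$ with $\eta$ a bounded, time-continuous, $\{\sF_t\}$-adapted scalar process (any predictable process being approximable by such via time-averaging) and $\phi\in C_c^{\infty}(\bR^d)$. For such $u$ I decompose $P^{\sigma}_t u - u$ into three pieces. The boundary piece comes from the region $t_0\in(T-t,T]$, where $P^{\sigma}_t u(t_0,\cdot)=0$, and contributes $E\int_{T-t}^T\|u(t_0,\cdot)\|^2\,dt_0$, which tends to $0$ by absolute continuity of the integral. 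On $t_0\le T-t$ I write $P^{\sigma}_t u(t_0,x)-u(t_0,x) = E_{\sF_{t_0}}\big[\eta_{t_0+t}\,(\phi(x+\Delta)-\phi(x))\big] + E_{\sF_{t_0}}[\eta_{t_0+t}-\eta_{t_0}]\,\phi(x)$ with $\Delta = X^{\sigma}_{t_0+t}-X^{\sigma}_{t_0}$. The spatial-shift piece is controlled by conditional Minkowski and $\|\phi(\cdot+\Delta)-\phi\|\le |\Delta|\,\|D\phi\|$ together with the bound $E_{\sF_{t_0}}|\Delta|^2 \le M^2 t$ coming from boundedness of $U$ (so $|\sigma_s|\le M$) and the conditional It\^o isometry, giving an $O(t)$ contribution to the squared norm. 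The time-difference piece is controlled by conditional Jensen, $|E_{\sF_{t_0}}[\eta_{t_0+t}-\eta_{t_0}]|^2\le E_{\sF_{t_0}}|\eta_{t_0+t}-\eta_{t_0}|^2$, after which $E\int_0^T|\eta_{t_0+t}-\eta_{t_0}|^2\,dt_0\to 0$ by the continuity of $\eta$ and dominated convergence. I expect the genuine obstacle to be precisely this strong-continuity step --- specifically, disentangling the conditional expectation from the simultaneous time and space translations --- which is why the reduction to the tensor-product dense class (where the time shift acts only on $\eta$ and the space shift only on $\phi$) is the crux of the argument.
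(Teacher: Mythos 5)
Your proof is correct and follows essentially the same route as the paper: the semigroup law and contraction are read off from the representation \eqref{P-sgm-exptn}, and strong continuity is reduced by density to an explicit increment estimate on product test elements, with the spatial shift controlled through the It\^o isometry and the boundedness of $U$. The only real difference is the choice of dense class and bookkeeping: the paper uses $\zeta\phi(t,x)$ with $\zeta$ a single bounded random variable and $\phi\in\cD_T$ smooth in time, absorbing the time and space increments into one It\^o-formula estimate of order $O(s)$, whereas you use $\eta_t\phi(x)$ with a continuous adapted $\eta$ and treat the time increment of $\eta$ separately by dominated convergence.
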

\begin{proof}
It is sufficient to check that for each $u\in\cL^2(L^2)$, there holds
\begin{align}
\lim_{s\rightarrow 0}\left(
\int_0^{T-s}E\|P^{\sigma}_{s}u(t)-u(t)\|^2\,dt+\int_{T-s}^TE\|u(t)\|^2\,dt
\right)=0.\label{eq-lem-lem-P-smgp}
\end{align}
Notice that
\begin{align*}
&\int_0^{T-s}E\|P^{\sigma}_{s}u(t)-u(t)\|^2\,dt\\
&=\int_0^{T-s}E\left\|E_{\sF_t}\left[u(t+s,\cdot+X^{\sigma}_{t+s}-X^{\sigma}_t)-u(t,\cdot)\right]\right\|^2\,dt
\\
&\leq
\int_0^{T-s}E\left\|u(t+s,\cdot+X^{\sigma}_{t+s}-X^{\sigma}_t)-u(t,\cdot)\right\|^2\,dt.
\end{align*}
Fix some $t\in[0,T)$. For any $\zeta\in L^{\infty}(\Omega,\sF_{t+s};\bR)$ and $\phi\in \mathcal{D}_T$, one has
{\small
\begin{align*}
&E\|\zeta\phi(t+s,\cdot+X^{\sigma}_{t+s}-X^{\sigma}_t)-\zeta\phi(t,\cdot)\|^2
\\
&\leq C E\|\phi(t+s,\cdot+X^{\sigma}_{t+s}-X^{\sigma}_t)-\phi(t,\cdot)\|^2
\\
&\leq
C E\int_{\bR^d}\Big(
\int_0^{s}\!\!\left(\left|\partial_r\phi\right|+\left|D^2\phi\right|\right)^2(t+r,x+X^{\sigma}_{t+r}-X^{\sigma}_t)\,dr
+\int_0^{s}\left| D\phi(t+r,x+X^{\sigma}_{t+r}-X^{\sigma}_t)\right|^2\,dr\Big)\,dx
\\
&
\leq Cs\,\rightarrow 0,\text{ as }s\rightarrow 0,
\end{align*}
}
where $C$ is independent of $(s,t)$.
Then the standard density argument yields  \eqref{eq-lem-lem-P-smgp}.
\end{proof}
\begin{defn}
  $u\in \cS^2(L^2)$ is called a regular $\sigma$-potential, provided that $u$ is $\sigma$-quasi-continuous,
   $\lim_{t\rightarrow T}u(t,\cdot)=0$ in $L^2(\bR^d)$ a.s.,
  \begin{equation}\label{Eq0 Stocha-Potential}
    E\int_{\bR^d} \sup_{t\in[0,T]} |u(t,x+X^{\sigma}_t)|^2 \,dx< \infty,
  \end{equation}
  and
  \begin{equation}\label{Eq Stoch-Potential}
    P^{\sigma}_su(t)\leq u(t),\,\mathbb{P}\otimes dx\text{-a.e.}\,\,\,\forall\, (t,s)\in [0,T]\times(0,\infty).
  \end{equation}
\end{defn}

 In view of the above definition, it is obvious that each regular $\sigma$-potential is nonnegative.

\begin{thm}\label{thm stoch-Potential}
  Let $u\in\cS^2(L^2)$. Then $u$  is a regular $\sigma$-potential if and only if there exist random field $\psi\in \mathcal{L}^2((H^{-1}_2)^{m})$ and a continuous increasing process $K=\{K_t\}_{t\in[0,T]}$ such that $K_0=0$, $K_t$ is ${\sF}_t\otimes\cB(\bR^d)$-measurable for each $t\in[0,T]$, $K_T\in L^2({\Omega},{\sF}_T;L^2)$, $\psi+Du\sigma\in\cL^2((L^2)^m)$
   and

  (i)
  $$
  u(t,x+X^{\sigma}_t)=K_T(x)-K_t(x)-\int_t^T \!\! (\psi+Du\sigma)(s,x+X^{\sigma}_s)\,dW_s,\ {\bP}\otimes {dx}\text{-a.e.}
  $$
  for each $t\in[0,T]$. The processes $K$ and $\psi$ are uniquely determined by those properties. Moreover, there hold the following relations:

  (ii)
  \begin{equation*}
  \begin{split}
  &E\left[\|u(t)\|^2+\int_t^T\!\!\|(\psi+Du\sigma)(s)\|^2\,ds\right]
  =E\int_{\bR^d}\!\! (K_T(x)-K_t(x))^2\,dx,\quad \forall\,t\in[0,T];
  \end{split}
  \end{equation*}

  (iii) for any $(\varphi,t)\in \mathcal{D}_T\times [0,T]$,
  \begin{equation*}
  \begin{split}
    &\langle u(t),\,\varphi(t)\rangle
    +\!\!\int_t^T\!\!\! \left( \langle \sigma\psi,\,D \varphi \rangle -\frac{1}{2}\langle u,\,\text{tr}\left(\sigma\sigma'D^2\varphi\right)\rangle +\langle u,\,\partial_s \varphi \rangle\right)(s)\,ds +\!\int_t^T\!\!\! \langle\varphi(s), \psi(s)\,dW_s  \rangle\\
    &=\mu(\varphi1_{[t,T]})=\, \int_t^T\int_{\bR^d}\varphi(s,x)\mu(ds,dx),\quad \text{a.s.},
  \end{split}
  \end{equation*}
  where $\mu$ is the random measure $\mu:\Omega\rightarrow \mathcal{M}([0,T]\times\bR^d)$
  $$
  \textrm{(iv) }~~\quad\quad\quad\quad \quad~~~~~~~~~
  \mu(\varphi 1_{[t,T]})=\int_{\bR^d}\int_t^T\! \varphi(s,x+X^{\sigma}_s)\,dK_s(x)dx,\ \,\varphi\in \mathcal{D}_T, \, \text{a.s.},\quad\quad~~~~~~
  $$
  with $\mathcal{M}([0,T]\times\bR^d)$ denoting the set of all the Radon measures on $[0,T]\times\bR^d$.
\end{thm}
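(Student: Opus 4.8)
The plan is to undo the degeneracy by the random space shift $x\mapsto x+X^\sigma_t$, reducing the claim to a classical Doob--Meyer decomposition for a family of continuous potentials indexed by $x$, and then to transfer the decomposition back to the original coordinates through the It\^o--Wentzell formula (Theorem \ref{Ito-Wentzell}). For fixed $x$ I set $\bar{u}(t,x):=u(t,x+X^\sigma_t)$. Using the representation \eqref{P-sgm-exptn} with the shift $y=x+X^\sigma_t$, the potential inequality \eqref{Eq Stoch-Potential} is seen to be equivalent to $E_{\sF_t}[\bar{u}(t+s,x)]\leq \bar{u}(t,x)$ for $\bP\otimes dx$-a.e. $x$; that is, $\{\bar{u}(t,x)\}_{t\in[0,T]}$ is a supermartingale. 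Together with the continuity of $t\mapsto\bar{u}(t,x)$ (which follows from $\sigma$-quasi-continuity), the nonnegativity of $u$, the terminal value $\bar{u}(T,x)=0$ coming from $\lim_{t\to T}u(t,\cdot)=0$ in $L^2$, and the class-(D) property guaranteed by the uniform bound \eqref{Eq0 Stocha-Potential}, this makes $\bar{u}(\cdot,x)$ a continuous potential in the classical sense for a.e. $x$.

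For the forward (``only if'') implication I would then apply, for a.e. $x$, the Doob--Meyer theorem to obtain a unique continuous increasing process $K_\cdot(x)$ with $K_0(x)=0$ and a continuous martingale $M_\cdot(x)$ with $\bar{u}(t,x)=M_t(x)-K_t(x)$; the terminal condition forces $M_T(x)=K_T(x)$, hence $M_t(x)=E_{\sF_t}[K_T(x)]$. The Brownian martingale representation then yields a predictable $\bar{\psi}(\cdot,x)$ with $M_t(x)-M_0(x)=\int_0^t\bar{\psi}(s,x)\,dW_s$, and subtracting the representations at $t$ and $T$ gives exactly (i) upon setting $\bar{\psi}=(\psi+Du\sigma)(\cdot,\cdot+X^\sigma_\cdot)$. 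The integrability $K_T\in L^2(\Omega,\sF_T;L^2)$ and $\psi+Du\sigma\in\cL^2((L^2)^m)$ follow from \eqref{Eq0 Stocha-Potential} and It\^o's isometry, and the energy identity (ii) comes from It\^o's formula applied to $|\bar{u}(t,x)|^2$, integrated in $x$ and in expectation, after noting that the shift $x\mapsto x+X^\sigma_s$ is Lebesgue-measure preserving so that $\|(\psi+Du\sigma)(s)\|^2=\int_{\bR^d}|\bar{\psi}(s,x)|^2\,dx$. The converse (``if'') implication is immediate: taking $E_{\sF_t}$ in (i) gives $\bar{u}(t,x)=E_{\sF_t}[K_T(x)-K_t(x)]$, and monotonicity of $K$ yields $E_{\sF_t}[\bar{u}(t+s,x)]=E_{\sF_t}[K_T(x)-K_{t+s}(x)]\leq\bar{u}(t,x)$, which is \eqref{Eq Stoch-Potential}; the remaining defining properties of a regular $\sigma$-potential are read off directly from (i).

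It remains to produce the weak formulation (iii)--(iv). I define the random measure $\mu$ by (iv); the bound $K_T\in L^2$ and the monotonicity of $K$ guarantee $\mu\in\cM([0,T]\times\bR^d)$ almost surely. The decomposition (i) is precisely the integrated form of the trivial backward equation $-d\bar{u}(t,x)=dK_t(x)-\bar{\psi}(t,x)\,dW_t$ with $\bar{u}(T,x)=0$, which is the analogue of \eqref{SHJB-linr-sigm-trvl} with the source $f\,dt$ replaced by the measure $dK_t(x)$. Running the equivalence between \eqref{SHJB-linr-sigm} and \eqref{SHJB-linr-sigm-trvl} in reverse by the It\^o--Wentzell formula (Theorem \ref{Ito-Wentzell}), with $\psi=\bar{\psi}(\cdot,\cdot-X^\sigma_\cdot)-Du\sigma$, and then testing against $\varphi\in\mathcal{D}_T$ and integrating by parts to move the spatial derivatives onto $\varphi$, I obtain (iii); the measure term appearing is $\int_{\bR^d}\int_t^T\varphi(s,x+X^\sigma_s)\,dK_s(x)\,dx$ after the change of variables $x\mapsto x+X^\sigma_s$, which is (iv).

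Uniqueness of $(K,\psi)$ follows from the uniqueness in the Doob--Meyer decomposition and in the martingale representation, applied $x$-wise. I expect the main obstacle to be the measurable and adapted dependence of the Doob--Meyer pair $(K_\cdot(x),\bar{\psi}(\cdot,x))$ on the parameter $x$, with the correct $L^2$-norms, since the classical decomposition is produced only for each fixed $x$; I would handle this by constructing $K$ via dyadic discrete-time approximations of the supermartingale (whose increments are explicit conditional expectations and hence jointly measurable) and passing to the limit using \eqref{Eq0 Stocha-Potential}, in the spirit of Lemma \ref{lem-S-quasi-contin}. A secondary technical point is keeping track of $\psi\in\cL^2((H^{-1}_2)^m)$ rather than $\bar{\psi}\in\cL^2((L^2)^m)$, the gap being exactly the term $Du\sigma$, which only lives in a negative-order Sobolev space and must be handled through the It\^o--Wentzell transform.
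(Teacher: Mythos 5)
Your route is genuinely different from the paper's: you apply the classical Doob--Meyer decomposition and martingale representation $x$-wise to the shifted process $\bar u(\cdot,x)=u(\cdot,x+X^{\sigma}_\cdot)$, whereas the paper constructs $(K,\psi)$ by penalization, solving the BSPDEs \eqref{BSPDE-prf-thm-potl} with source $f_n=n(u-u_n)$ and passing to the limit in $K^n_t(x)=\int_0^tf_n(s,x+X^{\sigma}_s)\,ds$. The penalization buys joint measurability, adaptedness and $L^2(dx)$-integrability of $K$ for free (each $K^n$ is an explicit absolutely continuous integral of a jointly measurable field), and it lets the paper derive (iii) by passing to the limit in the weak formulation \eqref{eq-thm-pots-iii-n}, where the drift is always of the form $f_n\,dt$ --- so the It\^o--Wentzell formula of Theorem \ref{Ito-Wentzell}, which is stated only for drifts absolutely continuous in time, is never invoked for a singular measure $dK_t(x)$. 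You correctly identify the measurability-in-$x$ of the Doob--Meyer pair as the main technical burden of your route; note additionally that your step ``running the equivalence in reverse by It\^o--Wentzell'' with source $dK_t(x)$ is not covered by Theorem \ref{Ito-Wentzell} as stated, and you would instead have to argue as in the proof of Proposition \ref{prop-equiv-potentl}, i.e.\ an $x$-wise product rule for $\bar u(t,x)\varphi(t,x+X^{\sigma}_t)$ followed by integration in $x$.

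The genuine gap is in your converse direction, which you declare ``immediate.'' A regular $\sigma$-potential must by definition be $\sigma$-quasi-continuous, and this is \emph{not} read off from (i): relation (i) only yields continuity of the one-parameter paths $t\mapsto u(t,x+X^{\sigma}_t)$ for ${\bP}\otimes dx$-a.e.\ $x$, while $\sigma$-quasi-continuity requires joint continuity of $(t,x)\mapsto u(\omega,t,x)$ outside predictable open sets of small capacity. The paper closes this by again solving the penalized BSPDEs \eqref{BSPDE-prf-thm-potl}, observing that each $u_n$ is $\sigma$-quasi-continuous by Proposition \ref{prop-quasi-cont-BSPDE}, proving via the reflected-BSDE penalization estimates of \cite{El_Karoui-reflec-1997} and Dini's theorem that
\begin{equation*}
\lim_{n\rightarrow\infty}E\int_{\bR^d}\sup_{t\in[0,T]}|u(t,x+X^{\sigma}_{t})-u_n(t,x+X^{\sigma}_{t})|^2\,dx=0,
\end{equation*}
and then invoking the closedness Lemma \ref{lem-S-quasi-contin}. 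Some approximation of this kind by fields that are already known to be $\sigma$-quasi-continuous is unavoidable in your argument as well; without it the ``if'' direction does not deliver a regular $\sigma$-potential in the sense of the paper's definition. The remaining claims of your converse (the supermartingale inequality \eqref{Eq Stoch-Potential}, the bound \eqref{Eq0 Stocha-Potential} via Doob's inequality, and the terminal limit) are fine.
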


\begin{proof}
   For each $n\in\bN^+$, let $(u_n,\psi_n)\in \cS^2(L^2)\times \mathcal{L}^2((H_2^{-1})^m)$ be the weak solution of BSPDE:
   \begin{equation}\label{BSPDE-prf-thm-potl}
  \left\{\begin{array}{l}
  \begin{split}
    -du_n(t)&=\left[
  \text{tr}\left(\frac{1}{2}\sigma \sigma' D^2 u_n+\sigma D\psi_n\right)(t)+n(u-u_n)(t)\right]\,dt- \psi_n(t)\,dW_t;\\
    u_n(T)  & = 0.
    \end{split}
  \end{array}\right.
 \end{equation}
    In view of \eqref{P-BSPDE-repn} and \eqref{Eq Stoch-Potential}, we have
    \begin{align}
    u_n(t,x)
    =\int_t^Tne^{-n(s-t)}P^{\sigma}_{s-t}u(t,x)\,ds
    \leq\int_t^Tne^{-n(s-t)}u(t,x)\,ds
    \leq& \,u(t,x)\nonumber
    \end{align}
    and
    \begin{align*}
    f_n(t,x):=n(u-u_n)(t,x)=n\int_t^{\infty}ne^{-n(s-t)}(u(t,x)-P^{\sigma}_{s-t}u(t,x))\,ds\geq 0.
    \end{align*}
    Therefore, $0\leq u_n\leq u$. In view of Proposition \ref{prop-BSPDE-Flow}, we have $\bP\otimes dx$-a.e., 
\begin{align}
u_n(t,x+X^{\sigma}_t)=E_{\sF_t}\int_t^T ne^{-n(s-t)}u(s,x+X^{\sigma}_s)\,ds,\quad 0\leq t \leq T,\label{eq-bsde-rep-un}
\end{align}    
  and by the comparison principle for BSDEs, $u_n(t,x+X^{\sigma}_t)$ converges increasingly for every $t\in[0,T]$, $\bP\otimes dx$-a.e. 
%
   
   Noticing that the trajectories of $u_n(t,x+X^{\sigma}_t)$ (by Proposition \ref{prop-quasi-cont-BSPDE})
    and $u(t,x+X^{\sigma}_t)$ are continuous, we have 
    $$
    \lim_{n\rightarrow \infty}\int_t^T ne^{-n(s-t)}u(s,x+X^{\sigma}_s)\,ds=u(t,x+X^{\sigma}_t),\quad \forall\,t\in[0,T],\,\,\bP\otimes dx\text{-a.e.},
    $$
    which together with relation \eqref{eq-bsde-rep-un} implies by Dini's theorem and Lebesgue's domination convergence theorem
    \begin{align}
    \lim_{n\rightarrow\infty} E\int_{\bR^d}\sup_{t\in[0,T]}|u_n(t,x+X^{\sigma}_t)-u(t,x+X^{\sigma}_t)|^2  \,dx=0.  \label{eq-thm-pots-conv-sup-u}
    \end{align}

    Setting $K_t^n(x)=\int_0^t f_n(s,x+X^{\sigma}_s)\,ds$, we have
    {\small
    \begin{align}
    u_n(t,x+X^{\sigma}_t)=(K_T^n-K_t^n)(x)
    -\!\int_t^T\!\!\!(\psi_n+ Du_n\sigma)(s,x+X^{\sigma}_s)\,dW_s.\label{eq-thm-pots-reltn}
    \end{align}
    }
    Thus,
    {\small
    \begin{align}
      &E\int_{\bR^d} (K_T^n-K_t^n)^2(x)\,dx \nonumber\\
      =\,& E\int_{\bR^d} \Big(u_n(t,x+X^{\sigma}_t)
    -\!\int_t^T\!\!\!(\psi_n+Du_n\sigma)(s,x+X^{\sigma}_s)\,dW_s \Big)^2\,dx\nonumber\\
    =\,& E\bigg[\|u_n(t)\|^2+\!\!\int_t^T\!\!\!\|(\psi_n+Du_n\sigma)(s)\|^2  \,ds    \bigg],
    \label{eq-identity-thm-sto-potent}
    \end{align}
    }
    and for the differences, there holds a similar relation. In particular, we have
    {\small
    \begin{align}
    &E\int_{\bR^d}
     |\delta_{nk}K_T(x)|^2dx
    =\, E\bigg[\|\delta_{nk}u(0)\|^2
    +\!\!\int_0^T\!\!\!\|(\delta_{nk}\psi+ D\delta_{nk}u\sigma)(s)\|^2  \,ds    \bigg],\label{eq-thm-post-A}
    \end{align}
    }
 where for $n,k\in\bN^+$,
 $$
 \left(\delta_{nk}u,\delta_{nk}\psi,\delta_{nk}K\right)
 :=
 (u_n,\psi_n,K^n)
 -
 (u_k,\psi_k,K^k).
 $$
On the other hand, It\^o's formula yields
{\small
\begin{align}
&|\delta_{nk}u(t,x+X^{\sigma}_t)|^2
+\int_t^T
|(\delta_{nk}\psi+D\delta_{nk}u\sigma)(s,x+X^{\sigma}_s)|^2
\,ds\nonumber\\
=&
2\int_t^T\delta_{nk}u(s,x+X^{\sigma}_s)\,d \delta_{nk}K_s(x)
-2\int_t^T\delta_{nk}u(s,x+X^{\sigma}_s)(\delta_{nk}\psi+D\delta_{nk}u\sigma)(s,x+X^{\sigma}_s)\,dW_s\label{eq-ito-delta-nk}
\end{align}
}
and
\begin{align}
&E\|u_n(t)\|^2+E\int_t^T \|(\psi_n+Du_n\sigma)(s)\|^2\,ds
\nonumber\\
=\,&E\int_{\bR^d}|u_n(t,x+X^{\sigma}_t)|^2dx
+E\int_{\bR^d}\int_t^T
|(\psi_n+Du_n\sigma)(s,x+X^{\sigma}_s)|^2
\,dsdx\nonumber\\
=\,&
2E\int_{\bR^d}\!\int_t^T\!\!  u_n(s,x+X^{\sigma}_s)\,d K^n_s(x)dx
\nonumber\\
\leq\,& 2E\int_{\bR^d} \sup_{s\in [t,T]}|u_n(s,x+X^{\sigma}_s)|^2\,dx+\frac{1}{2} E\int_{\bR^d} (K_T^n-K_t^n)^2(x)\,dx.\label{eq-ito-un}
\end{align}

Putting \eqref{eq-thm-pots-conv-sup-u}, \eqref{eq-identity-thm-sto-potent} and \eqref{eq-ito-un} together, we obtain
\begin{align}
\sup_{n\in\bN^+}\left\{
E\int_{\bR^d} |K_T^n|^2(x)\,dx + E\int_0^T\|( Du_n\sigma+\psi_n)(s)\|^2ds
\right\}
<\infty.\label{eq-n-bdness}
\end{align}
Without any loss of generality, let $n>k$. Noticing that $(u_n-u_k)(f_n-f_k)\leq (u_n-u_k)f_n$, one has
\begin{align*}
&E\int_{\bR^d}\left|\int_t^T\delta_{nk}u(s,x+X^{\sigma}_s)\,d \delta_{nk}K_s(x)\right|\,dx\\
\leq\,& \left(E\int_{\bR^d} \sup_{s\in [t,T]}|\delta_{nk}u(s,x+X^{\sigma}_s)|^2\,dx\right)^{1/2}
 \left(E\int_{\bR^d} |K^n_T(x)|^2\,dx\right)^{1/2}
\end{align*}
which by the boundedness estimate \eqref{eq-n-bdness} converges to zero as $n$ tends to infinity.
Then it follows from \eqref{eq-ito-delta-nk} that
    \begin{align}
    &E\left[ \int_0^T\!\!\!\Big(\|D\delta_{nk}u\sigma(s)+\delta_{nk}\psi(s)\|^2  \Big)\,ds \right]
    \rightarrow 0,\quad \text{as }n,k\rightarrow\infty
    \end{align}
    which together with relations \eqref{eq-thm-pots-conv-sup-u} and \eqref{eq-thm-post-A} implies
    $$
    E\int_{\bR^d}
     |\delta_{nk}K_T(x)|^2dx\rightarrow 0,\quad\text{as }n,k\rightarrow \infty.
    $$
In view of relation \eqref{eq-thm-pots-reltn}, by Doob's inequality one further has
    \begin{align}
    &E\int_{\bR^d}\sup_{s\in[0,T]}|\delta_{nk}K_s(x)|^2\,dx
    \nonumber\\
    \leq\, &C\left\{
    E\int_{\bR^d} \sup_{s\in [0,T]}|\delta_{nk}u(s,x+X^{\sigma}_s)|^2\,dx
+E \int_0^T\!\!\|D \delta_{nk}u\sigma(s)+\delta_{nk}\psi(s)\|^2 \,ds
\right\}    \nonumber\\
    &\longrightarrow 0,\quad \text{as }n,k\rightarrow \infty, \label{eq-thm-pots-conv-A}
    \end{align}

    Denote by $K$ and $\psi$ the limits of $\{K^n\}$ and $\{\psi_n\}$ respectively. In view of relations \eqref{eq-thm-pots-reltn} and \eqref{eq-identity-thm-sto-potent}, passing to the limit we deduce (i) and (ii).

    As for (iii), the relation holds for the triple $(u_n,v_n,K^n)$ for each $n$, i.e., for any $\varphi\in\mathcal{D}_T$
{\small
\begin{align}
    &\langle u_n(t),\,\varphi(t)\rangle
    +\!\!\int_t^T\!\!\! \Big( \langle\sigma\psi_n,\,D \varphi \rangle-\frac{1}{2}\langle u_n,\,\text{tr}\left(\sigma\sigma'D^2\varphi\right) \rangle  +\langle u_n,\,\partial_s \varphi \rangle\Big)(s)\,ds + \int_t^T\!\!\! \langle\varphi(s), \psi_n(s)\,dW_s  \rangle
    \nonumber\\
    &=\, \int_t^T\int_{\bR^d}\varphi(s,x)f_n(s,x)\,dxds=\int_{\bR^d}\int_t^T\varphi(s,x+X^{\sigma}_s)\,dK^n_s(x)dx,\quad \text{a.s.}, \,\,\forall\, t\in[0,T].
  \label{eq-thm-pots-iii-n}
  \end{align}
  }
  Applying It\^o's formula, one has
  \begin{align*}
  &d\varphi(s,x+X^{\sigma}_s)\\
  =&
  \left(
  	\partial_s\varphi+\frac{1}{2}\text{tr}\left(\sigma\sigma'D^2\varphi\right)
  \right)(s,x+X_s^{\sigma})\,ds+D\varphi(s,x+X^{\sigma}_s)\sigma_s\,dW_s
  \end{align*}
  and
  \begin{align*}
   &\int_t^T \varphi(s,x+X^{\sigma}_s) \,d(K_s-K^n_s)(x)\\
   &=\varphi(T,x+X^{\sigma}_T)(K_T-K^n_T)(x)-\varphi(t,x+X^{\sigma}_t)(K_t-K^n_t)(x)
   -\int_0^T\!\!(K_s-K^n_s)(x)\,d\varphi(s,x+X^{\sigma}_s).
  \end{align*}
  Then in view of \eqref{eq-thm-pots-conv-A}, it is easy to get
%
   \begin{align*}
   &E\left| \int_{\bR^d} \int_t^T \varphi(s,x+X^{\sigma}_s) \,d(K_s-K^n_s)(x)\,dx  \right|
    \rightarrow 0, \quad \text{as }n\rightarrow\infty.
   \end{align*}
   Passing to the limit with $n\rightarrow \infty$ in \eqref{eq-thm-pots-iii-n}, we prove (iii).

  From Doob-Meyer decomposition theorem we conclude the uniqueness of the pair $(K,\,\psi)$.  
  
  Finally, let us consider the converse. First, we verify directly the nonnegativity of $u$, relation \eqref{Eq0 Stocha-Potential} and $\lim_{t\rightarrow T}u(t,\cdot)=0$ in $L^2(\bR^d)$ a.s.  Let $(u_n,\psi_n)$ be the solution of BSPDE \eqref{BSPDE-prf-thm-potl} and put $$(Y_t(x),Y^n_t(x),Z^n_t(x))=(u,u_n,\psi_n+Du_n\sigma)(t,x+X_t^{\sigma}).$$ Then $(Y^n_t(x),Z^n_t(x))$ satisfies BSDE
   \begin{align*}
   Y^n_t(x)=\int_t^Tn(Y_s(x)-Y^n_s(x))\,ds-\int_t^T\,Z^n_s(x)\,dW_s,
   \end{align*}
   and since $Y_s(x)$ is a supermartingale,
   $$
   0\leq Y^n_t(x)=E_{\sF_t}\int_t^Tne^{-n(s-t)}Y_s(x)\,ds\leq Y_t(x).
   $$
   By the penalization procedure for the reflected BSDE  \cite[Page 719-723]{El_Karoui-reflec-1997}, $Y^n$ converges up to $Y$. Taking into account the $\sigma$-quasi-continuity of $u_n$ and the continuity of $Y$, we have further
   $$\lim_{n\rightarrow\infty}E\int_{\bR^d}\sup_{t\in[0,T]}|u(t,x+X^{\sigma}_{t})-u_n(t,x+X^{\sigma}_{t})|^2\,dx=0.$$
   Then
   by Lemma \ref{lem-S-quasi-contin}, $u$ is $\sigma$-quasi-continuous,
   and by relation (i), one has
   \begin{align*}
   P^{\sigma}_ru(t,x+X^{\sigma}_t)=E_{{\sF}_t}u(t+r,x+X^{\sigma}_{t+r})
   =&E_{{\sF}_t}K_T(x) -E_{{\sF}_t}K_{t+r}(x)\\
   \leq&
   E_{{\sF}_t}K_T(x) -K_t(x).
   \end{align*}
   Thus, $P^{\sigma}_ru(t,x+X^{\sigma}_t)\leq u(t,x+X^{\sigma}_t)$ a.e., and there holds relation \eqref{Eq Stoch-Potential}.
   Hence, $u$ is a regular $\sigma$-potential.
\end{proof}

\begin{rmk}\label{rmk-potentl-est-K}
   Thanks to Hahn-Banach theorem and the denseness of $\cD_T$ in the space of continuous functions on $Q$, there is a unique random Radon measure satisfying relation (iii) of Theorem \ref{thm stoch-Potential}. In the following,  we also say that $u$ is a regular $\sigma$-potential associated with couple $(\psi,\mu)$.  Combining relations \eqref{eq-identity-thm-sto-potent} and \eqref{eq-ito-un} and passing to the limits, one gets
  $$
  E|K_T(x)|^2\leq C E\sup_{t\in[0,T]}|u(t,x+X_t^{\sigma})|^2\leq C |u(0,x)|^2,\quad dx\text{-a.e.}
  $$
  with the constants $C$s being independent of $\sigma$,  where the second inequality comes from the supermartingale property of $u(t,x+X_t^{\sigma})$.
%

In addition, as $E\int_{\bR^d} |K_T(x)|^2\, dx<\infty$, for any random field $\phi\in \mathcal{L}^2(L^2(\bR^d))$ satisfying
  $$\phi(t,x+X^{\sigma}_t)\textrm{ is continuous }\mathbb{P}\otimes dx \textrm{-a.e., and }
  E\int_{\bR^d}\sup_{t\in[0,T]}|\phi(t,x+X^{\sigma}_t)|^2\,dx<\infty,$$
  $\mu(\phi)$ makes sense by relation (iv).
\end{rmk}

When $u$ is a deterministic function on $Q$ and $\sigma\sigma'\equiv \mathbb{I}^{d\times d}$, then the approximating BSPDE \eqref{BSPDE-prf-thm-potl} becomes the following deterministic parabolic PDE
   \begin{equation*}
  \left\{\begin{array}{l}
  \begin{split}
    -\partial_tu_n &=
  \frac{1}{2}\Delta u_n+n(u-u_n);\\
    u_n(T)  & = 0.
    \end{split}
  \end{array}\right.
  \end{equation*}
As a result, one has $\psi=0$. One sees that Theorem \ref{thm stoch-Potential} generalizes the classical regular potential in the backward stochastic framework. We refer to \cite[Theorem 2]{MatoussiStoica2010} for the BSDE representation for classical regular potentials, and see  \cite{blumenthal-1968-Markov-Potential,fukushima-2010-dirichlet} for general theory on potentials.

\begin{prop}\label{prop-equiv-potentl}
Let $u\in\cS^2(L^2)$ be $\sigma$-quasi-continuous. Then $u$ is a regular $\sigma$-potential if and only if there exist a random Radon measure $\mu$ and random field $\psi\in\cL^2(H^{-1}_2)$ such that 
for any  $(\varphi,t)\in \mathcal{D}_T\times [0,T]$,
  {\small
  \begin{align}
    &\int_{\bR^d}\int_t^T\!\varphi(t,x)\,\mu(dt,dx)-\!\int_t^T\!\!\! \langle\varphi(s), \psi(s)\,dW_s\rangle\nonumber\\
    =&\langle u(t),\,\varphi(t)\rangle
    +\!\!\int_t^T\!\!\! \left( \langle \sigma\psi,\,D \varphi \rangle -\frac{1}{2}\langle u,\,\text{tr}\left(\sigma\sigma'D^2\varphi\right)\rangle +\langle u,\,\partial_s \varphi \rangle\right)(s)\,ds,\quad \text{a.s.}\label{eq-prop-equiv}
    \end{align}
    }
\end{prop}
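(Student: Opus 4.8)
The forward implication is essentially contained in Theorem \ref{thm stoch-Potential}. If $u$ is a regular $\sigma$-potential, that theorem furnishes a field $\psi$ and a continuous increasing process $K$ for which relation (iii) holds; taking $\mu$ to be the random Radon measure defined through relation (iv), the identity \eqref{eq-prop-equiv} is nothing but relation (iii) after moving the stochastic integral to the left-hand side. So here I would simply quote Theorem \ref{thm stoch-Potential} and record that the measure $\mu$ produced there is \emph{positive}, since it is built from the increasing process $K$.

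For the converse, suppose $u\in\cS^2(L^2)$ is $\sigma$-quasi-continuous and that \eqref{eq-prop-equiv} holds for some positive random Radon measure $\mu$ and some $\psi$. The plan is to recover the pathwise decomposition (i) of Theorem \ref{thm stoch-Potential} and then invoke the converse half of that theorem. First I would read \eqref{eq-prop-equiv} as the weak formulation of the linear BSPDE
$$-du=\Big[\text{tr}\big(\tfrac12\sigma\sigma'D^2u+\sigma D\psi\big)\Big]\,dt+\mu(dt,dx)-\psi\,dW_t ,$$
that is, BSPDE \eqref{SHJB-linr-sigm} with the source $f\,dt$ replaced by the measure $\mu(dt,dx)$ and with a terminal value to be identified; a term-by-term comparison of \eqref{eq-prop-equiv} with the weak form of \eqref{SHJB-linr-sigm} shows the two identities coincide. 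Applying the generalized It\^o--Wentzell formula (Theorem \ref{Ito-Wentzell}) along the flow $x\mapsto x+X^{\sigma}_t$, exactly as in the passage from \eqref{SHJB-linr-sigm} to \eqref{SHJB-linr-sigm-trvl}, I would transform this into the trivial backward equation for $\bar u(t,x):=u(t,x+X^{\sigma}_t)$,
$$-d\bar u(t,x)=\bar\mu(dt,x)-(\psi+Du\sigma)(t,x+X^{\sigma}_t)\,dW_t ,$$
where $\bar\mu$ is the image of $\mu$ under the predictable shift, characterised by $\int_{\bR^d}\int_t^T\varphi(s,x+X^{\sigma}_s)\,\bar\mu(ds,x)\,dx=\mu(\varphi 1_{[t,T]})$ for $\varphi\in\mathcal{D}_T$.

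Setting $K_t(x):=\int_0^t\bar\mu(ds,x)$ would then yield relation (i), namely
$$u(t,x+X^{\sigma}_t)=K_T(x)-K_t(x)-\int_t^T(\psi+Du\sigma)(s,x+X^{\sigma}_s)\,dW_s ,$$
and since $\mu\geq0$ the process $K$ is continuous and increasing with $K_0=0$. The integrability \eqref{Eq0 Stocha-Potential} and $K_T\in L^2(\Omega,\sF_T;L^2)$ would follow from $u\in\cS^2(L^2)$ together with the It\^o/energy identity of type (ii), while the terminal condition $\lim_{t\to T}u(t,\cdot)=0$ in $L^2$ would be obtained by letting $t\to T$ in \eqref{eq-prop-equiv} and using the $\cS^2(L^2)$-continuity of $u$ (so that $\bar u(T,\cdot)=0$). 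With (i)--(iv) thus established, the converse direction of Theorem \ref{thm stoch-Potential} gives $P^{\sigma}_su\leq u$ and hence that $u$ is a regular $\sigma$-potential.

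I expect the main obstacle to lie in the two measure-theoretic points of the second paragraph: making the It\^o--Wentzell change of variables rigorous when the forcing is a Radon measure rather than an $\cL^2(L^2)$ source, which I anticipate handling by mollifying $\mu$, applying Theorem \ref{Ito-Wentzell} to the smoothed problems, and passing to the limit; and verifying that the pulled-back measure $\bar\mu$ genuinely defines a continuous increasing process $K$ with the required square-integrability. The positivity of $\mu$ is the structural feature that must be tracked throughout, since it is precisely what turns $K$ into an increasing process and thereby produces the supermartingale inequality \eqref{Eq Stoch-Potential}.
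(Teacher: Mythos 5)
Your forward direction is exactly the paper's (quote relation (iii) of Theorem \ref{thm stoch-Potential}), and your instinct that the positivity of $\mu$ is the structural engine of the converse is correct. But the converse as you propose it has a genuine gap at the step where you set $K_t(x):=\int_0^t\bar\mu(ds,x)$. This presumes that the pulled-back measure disintegrates as $\bar\mu(ds,x)\,dx$, i.e.\ that for ($\bP\otimes dx$-a.e.) $x$ there is a genuine increasing process $K_\cdot(x)$ whose superposition over $x$ reproduces $\mu$. Nothing in the hypothesis provides this: a random Radon measure $\mu$ satisfying \eqref{eq-prop-equiv} need not be absolutely continuous in $x$ after the shift (think of a potential generated by a measure concentrated on a line $\{x_0\}\times[0,T]$ in low dimension: the correct $K_\cdot(x)$ is then a local-time-type process, singular in $t$ for each fixed $x$, and there is no formula for it in terms of $\mu$ alone). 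The representation (iv) of Theorem \ref{thm stoch-Potential} is a \emph{conclusion} obtained from the pathwise-in-$x$ Doob--Meyer decomposition of the supermartingale $u(t,x+X^{\sigma}_t)$, not an input one can extract from $\mu$ by a change of variables. Relatedly, Theorem \ref{Ito-Wentzell} is stated for drifts in $\mathfrak{D}^1$, so it does not apply verbatim to a measure-valued forcing; your mollification plan would still leave you having to show that the mollified increasing processes converge to a disintegration of $\mu$, which is essentially the whole difficulty.

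The paper's converse avoids reconstructing $K$ altogether. It first tests \eqref{eq-prop-equiv} with $h(s)\phi(x-y)$ to get $u(T,\cdot)=0$, then proves the It\^o--Kunita-type identity \eqref{eq-ito-kunita} by moving the shift onto the test function, $\phi(\cdot-X^{\sigma}_s)$ (with an explicit Riemann-sum estimate controlling the $\mu$-term). Taking $\phi\ge 0$ and using $\mu\ge 0$, the process $\langle u(t,\cdot+X^{\sigma}_t),\phi\rangle$ is a continuous nonnegative supermartingale; by arbitrariness of $\phi$ this yields $u(t_0,x+X^{\sigma}_{t_0})\ge E_{\sF_{t_0}}u(t_1,x+X^{\sigma}_{t_1})$ $\bP\otimes dx$-a.e., hence \eqref{Eq Stoch-Potential} via \eqref{P-sgm-exptn}, and Doob's maximal inequality for the nonnegative supermartingale $u(t,x+X^{\sigma}_t)$ gives \eqref{Eq0 Stocha-Potential}. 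All four items of the definition are then verified directly, with no appeal to the converse half of Theorem \ref{thm stoch-Potential}. If you want to keep your architecture, you must first establish the supermartingale property by exactly this testing argument and only then invoke Doob--Meyer for each $x$ to manufacture $K$ (also checking $\psi+Du\sigma\in\cL^2((L^2)^m)$, which your route assumes but does not derive); at that point the detour through relation (i) becomes redundant.
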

\begin{proof}
By (iii) of Theorem \ref{thm stoch-Potential}, it is sufficient to prove the converse.
For each $(y,\phi)\in \bR^d\times C_c^{\infty}(\bR^d)$ and $0\leq t\leq \tilde{t}\leq T$, choosing $\phi\in C_c^{\infty}(\bR^d)$ and $h\in C_c^{\infty}(\bR)$ with $h1_{[-1,T+1]}=1_{[-1,T+1]}$, and  applying relation \eqref{eq-prop-equiv} to test function $h(s)\phi(x)$, we have
\begin{align*}
&\langle u(\tilde t),\,\phi(\cdot-y)\rangle+\int_{\bR^d}\int_t^{\tilde t}\phi(x-y)\,\mu(ds,dx)-\!\int_t^{\tilde t}\!\!\! \langle\phi(\cdot-y), \psi(s)\,dW_s\rangle\\
&=\langle u(t),\,\phi(\cdot-y)\rangle
    +\!\!\int_t^{\tilde t}\!\!\! \left( \langle \sigma\psi(s),\,D \phi (\cdot-y)\rangle -\frac{1}{2}\langle u(s),\,\text{tr}\left(\sigma\sigma'D^2\phi(\cdot-y)\right)\rangle \right)\,ds ,\,\,\text{a.s.}
\end{align*}
In particular, we have $\langle u(T,\cdot),\phi\rangle =0$ a.s., which together with the arbitrariness of $\phi$ implies $u(T,\cdot)=0$, $\bP\otimes dx$-a.e. In a similar way to the proof of \cite[Theorem 1.1]{kunita1981some} for the It\^o-Kunita formula, we have
\begin{align}
&\langle u(\tilde t),\,\phi(\cdot-X^{\sigma}_{\tilde t})\rangle+\int_{\bR^d}\int_t^{\tilde t}\phi(x-X^{\sigma}_s)\,\mu(ds,dx)-\!\int_t^{\tilde t}\!\!\! \langle\phi(\cdot-X^{\sigma}_s), \psi(s)\,dW_s\rangle\nonumber\\
&=\langle u(t),\,\phi(\cdot-X^{\sigma}_t)\rangle
    -\!\int_{t}^{\tilde t}\langle u(s),\, D\phi(\cdot-X^{\sigma}_s)\sigma_s\,dW_s\rangle,  \,\text{a.s.}
    \label{eq-ito-kunita}
\end{align}
To the end, we take $\phi\geq 0$. Then $\left\{ \langle u( t,\cdot+X^{\sigma}_t),\,\phi\rangle\right\}_{t\in[0,T]}$ is a continuous nonnegative supermartingale. Hence, for any $0\leq t_0\leq t_1\leq T$, it holds
$$
\langle u( t_0,\cdot+X^{\sigma}_{t_0}),\,\phi\rangle
\geq E_{\sF_{t_0}} \langle u( t_1,\cdot+X^{\sigma}_{t_1}),\,\phi\rangle ,\quad \text{a.s.}
$$
which together with the arbitrariness of $\phi$ implies
$$u(t_0,x+X^{\sigma}_{t_0}) \geq E_{\sF_{t_0}}  u( t_1,\cdot+X^{\sigma}_{t_1}) , \quad \bP\otimes dx\text{-a.e.}
$$
Obviously, relation \eqref{Eq Stoch-Potential} holds and in view of the $\sigma$-quasi-continuity of $u$, for almost every $x\in\bR^d$, $\left\{u(t,x+X_t^{\sigma})\right\}_{t\in[0,T]}$ is a continuous nonnegative supermartingale. Then it follows that
\[
E\int_{\bR^d} \sup_{t\in[0,T]} |u(t,x+X^{\sigma}_t)|^2 \,dx
\leq C \|u(0)\|^2 <\infty.
\]
Hence, $u$ is a regular $\sigma$-potential.
\end{proof}
In the above proof, the verification for relation \eqref{eq-ito-kunita} is so similar to that of \cite[Theorem 1.1]{kunita1981some}  that we omit it. In fact, compared with \cite[Pages 119-121, proof of Theorem 1.1]{kunita1981some} it is sufficient to notice
\begin{align*}
&\left|\sum_{k=0}^{n-1}\sum_{j=1}^d\int_{t_k}^{t_{k+1}}(X^{\sigma}_{t_{k+1}}-X^{\sigma}_{t_{k}})^j\int_{\bR^d}\partial_{x^j}\phi(x-X^{\sigma}_{t_k})\,\mu(dt,dx)
\right|
\\
&\leq\,C \|D\phi\|_{(L^{\infty})^d}
\sum_{k=0}^{n-1}\left|X^{\sigma}_{t_{k+1}}-X^{\sigma}_{t_{k}}\right|
\int_{t_k}^{t_{k+1}}\int_{\bR^d}1_{\{|x|\leq \max_{s\in[0,t]}|X^{\sigma}_s|+1\}}(x)\,\mu(dt,dx)
\\
&\rightarrow 0 \text{ a.s., as } |\Delta_n|\rightarrow 0,
\end{align*}
where for each $n\in\bN^+$, $\Delta_n=\{0=t_0<t_1<\dots<t_n=T\}$ is a partition of $[0,T]$, $|\Delta_n|=\max_{0\leq k\leq n-1}|t_{k+1}-t_k|$ and without any loss of generality, $\phi$ is  supported in the unit ball of $\bR^d$ centered at the origin.

Furthermore, from relation \eqref{eq-ito-kunita} one can derive immediately through the standard denseness arguments the following representation for the regular $\sigma$-potential via associated random Radon measure.

\begin{cor}\label{cor-regul-Meas-to-potent}
  Let $u$ be a regular $\sigma$-potential and $\mu:\Omega \rightarrow \mathcal{M}([0,T]\times \bR^d)$ a random Radon measure such that relation (iii) holds. Then one has
  \begin{equation}\label{eq lem-stoch-Potential}
    \begin{split}
    &\langle\phi,\,u(t)\rangle
    =E_{\sF_t}\int_t^T\int_{\bR^d}\phi(y-X^{\sigma}_{s}+X^{\sigma}_{t})\mu(dy,ds),
    \end{split}
  \end{equation}
  for each $\phi\in L^2(\bR^d)$ and $t\in[0,T]$.
\end{cor}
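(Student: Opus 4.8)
The plan is to start from the It\^o--Kunita-type identity \eqref{eq-ito-kunita}, specialize it to $\tilde t=T$, and then strip off the martingale part by conditioning on $\sF_t$. Since $u$ is a regular $\sigma$-potential, $u(T,\cdot)=0$ in $L^2(\bR^d)$ a.s.\ (from the terminal condition in the definition together with $u\in\cS^2(L^2)$), so the boundary term $\langle u(T),\phi(\cdot-X^{\sigma}_T)\rangle$ drops out. The two stochastic integrals $\int_t^T\langle\phi(\cdot-X^{\sigma}_s),\psi(s)\,dW_s\rangle$ and $\int_t^T\langle u(s),D\phi(\cdot-X^{\sigma}_s)\sigma_s\,dW_s\rangle$ are genuine $(\sF_s)$-martingale increments on $[t,T]$: the integrands are square-integrable because $\phi,D\phi\in C_c^{\infty}$ are bounded with compact support, $U$ (hence $\sigma$) is bounded, $u\in\cS^2(L^2)$ and $\psi\in\cL^2(H^{-1}_2)$, using translation invariance of the norms in the shift $X^{\sigma}_s$. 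Their $\sF_t$-conditional expectations therefore vanish, and \eqref{eq-ito-kunita} collapses to
\[
\langle u(t),\,\phi(\cdot-X^{\sigma}_t)\rangle=E_{\sF_t}\int_{\bR^d}\int_t^T\phi(x-X^{\sigma}_s)\,\mu(ds,dx),
\]
valid a.s.\ for every deterministic $\phi\in C_c^{\infty}(\bR^d)$, the right-hand integral being well defined by Remark \ref{rmk-potentl-est-K} since $\phi(x-X^{\sigma}_s)$ equals $\phi(x)$ at the shifted argument $x+X^{\sigma}_s$.

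The second step is to substitute the random shift $\phi(\cdot+X^{\sigma}_t)$ for $\phi$ in this identity: the left-hand side becomes $\langle u(t),\phi\rangle$, while $\phi(x-X^{\sigma}_s)$ turns into $\phi(x-X^{\sigma}_s+X^{\sigma}_t)$, which is precisely the claimed formula. Because the preceding identity was proved only for deterministic test functions, this substitution must be justified by a denseness/monotone-class argument. First one checks the identity for $\sF_t$-measurable step integrands $\sum_i \mathbf{1}_{A_i}\phi_i$ with $A_i\in\sF_t$ and $\phi_i\in C_c^{\infty}$, where the indicators $\mathbf{1}_{A_i}$ may be pulled through $E_{\sF_t}$; then one approximates the jointly measurable, $\sF_t\otimes\cB(\bR^d)$-measurable integrand $(\omega,x)\mapsto\phi(x+X^{\sigma}_t(\omega))$ (measurable because $X^{\sigma}_t$ is $\sF_t$-measurable) by such step functions and passes to the limit. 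Finally one relaxes $\phi\in C_c^{\infty}$ to $\phi\in L^2(\bR^d)$ by density, the relevant integrals being controlled by $u(t)\in L^2$ and by the bound $E\int_{\bR^d}|K_T(x)|^2\,dx<\infty$ of Remark \ref{rmk-potentl-est-K}, which keeps $\mu$ acting on the functions in play.

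The main obstacle is exactly this random-shift step: inserting an $\sF_t$-measurable random test function into a relation established only for fixed $\phi$, together with the dominated-convergence estimates that legitimize both the passage from step functions to $\phi(\cdot+X^{\sigma}_t)$ and the final extension from $C_c^{\infty}$ to $L^2$. As an independent check one can bypass \eqref{eq-ito-kunita} altogether using the explicit representation (iv): writing the right-hand side of the target through $K$ gives $\int_{\bR^d}\int_t^T\phi(x+X^{\sigma}_t)\,dK_s(x)\,dx=\int_{\bR^d}\phi(x+X^{\sigma}_t)(K_T-K_t)(x)\,dx$ (the $X^{\sigma}_s$ cancels), and taking $E_{\sF_t}$ inside the $dx$-integral turns $E_{\sF_t}[(K_T-K_t)(x)]$ into $u(t,x+X^{\sigma}_t)$ by (i); the change of variables $z=x+X^{\sigma}_t$ then recovers $\langle\phi,u(t)\rangle$. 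This confirms the formula and shows the computation is essentially forced once relations (i) and (iv) are in hand.
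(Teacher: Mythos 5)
Your proof is correct and follows essentially the same route as the paper, which simply observes that the representation follows from relation \eqref{eq-ito-kunita} (taking $\tilde t=T$, using $u(T,\cdot)=0$, conditioning on $\sF_t$ to kill the two martingale terms) combined with standard denseness arguments to admit the random shift $\phi(\cdot+X^{\sigma}_t)$ and then general $\phi\in L^2(\bR^d)$. Your closing cross-check via relations (i) and (iv) of Theorem \ref{thm stoch-Potential} is a valid and slightly more direct confirmation, but it is not a different method in any essential sense.
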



We are ready to introduce the family of random measures involving in the notion of the weak solution for stochastic HJB equations.

\begin{defn}\label{def-regul-Stoch-meas}
A nonnegative random Radon measure $\mu:\Omega \rightarrow \mathcal{M}([0,T]\times \bR^d)$ is called regular $\sigma$-measure provided that there exists a regular $\sigma$-potential $u$ such that relation (iii) of Theorem \ref{thm stoch-Potential} is satisfied.
\end{defn}

In view of Definition \ref{def-regul-Stoch-meas}, we see that each regular $\sigma$-measure corresponds to a regular $\sigma$-potential $u$ such that relation (iii) of Theorem \ref{thm stoch-Potential} holds. On the other hand, from Corollary \ref{cor-regul-Meas-to-potent}, we conclude that the corresponding regular $\sigma$-potential can be precisely expressed via \eqref{eq lem-stoch-Potential} in terms of the measure. Therefore, the correspondence between the regular $\sigma$-potential and regular $\sigma$-measure is a bijection. Moreover, by Theorem \ref{thm stoch-Potential} and Proposition \ref{prop-equiv-potentl}, the regular $\sigma$-potential as well as the regular $\sigma$-measure is equivalently characterized via BSDE and in a variational way respectively.

%



\subsection{Monotonic convergence theorem}

\begin{prop}\label{prop-monotone-potential}
  Let $\{u_n;n\in\mathbb{N}^+\}$ be a sequence of regular $\sigma$-potentials converging up to some $u$. Assume further that $u(t,x+X^{\sigma}_t)$ is $\mathbb{P}\otimes dx$-a.e. continuous  with
  $$\int_{\bR^d}E  \sup_{t\in [0,T]}|u(t,x+X^{\sigma}_t)|^2  \,dx<\infty.$$
   Then $u$ is a regular  $\sigma$-potential.
\end{prop}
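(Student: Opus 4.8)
The plan is to verify directly the defining properties of a regular $\sigma$-potential for the limit $u$, transferring each one from the $u_n$ by a monotone/Dini argument. First I would record that, for every $n$, the potential $u_n$ is nonnegative, is $\sigma$-quasi-continuous (so that $u_n(t,x+X^{\sigma}_t)$ has continuous trajectories $\bP\otimes dx$-a.e.), and, by \eqref{Eq Stoch-Potential} combined with the representation \eqref{P-sgm-exptn}, the process $\{u_n(t,x+X^{\sigma}_t)\}_{t\in[0,T]}$ is a nonnegative continuous supermartingale for $dx$-a.e. $x$.

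The crucial step is to upgrade the pointwise increasing convergence $u_n\uparrow u$ to a convergence of suprema. Since $u_n(\cdot,x+X^{\sigma}_\cdot)$ and, by hypothesis, $u(\cdot,x+X^{\sigma}_\cdot)$ are continuous on the compact interval $[0,T]$ for $\bP\otimes dx$-a.e. $(\omega,x)$, and the convergence is monotone increasing, Dini's theorem yields
\[
\sup_{t\in[0,T]}\big|u(t,x+X^{\sigma}_t)-u_n(t,x+X^{\sigma}_t)\big|\longrightarrow 0, \quad \bP\otimes dx\text{-a.e.}
\]
Because $0\le u_n\le u$ and $E\int_{\bR^d}\sup_{t\in[0,T]}|u(t,x+X^{\sigma}_t)|^2\,dx<\infty$ by assumption, Lebesgue's dominated convergence theorem gives $\lim_{n\to\infty} E\int_{\bR^d}\sup_{t\in[0,T]}|u(t,x+X^{\sigma}_t)-u_n(t,x+X^{\sigma}_t)|^2\,dx=0$. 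By the translation invariance of $dx$ this is precisely $\|u-u_n\|_{\cS^2(L^2)}\to 0$; since each $u_n\in\cS^2(L^2)$ and $\cS^2(L^2)$ is complete, it follows that $u\in\cS^2(L^2)$ with continuous $L^2$-valued trajectories. Feeding the same convergence into Lemma \ref{lem-S-quasi-contin} (with $p=2$) shows that $u$ is $\sigma$-quasi-continuous.

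It then remains to check the potential inequality and the terminal condition. For $0\le t_0\le t_1\le T$, the supermartingale property of each $u_n$ gives $E_{\sF_{t_0}}[u_n(t_1,x+X^{\sigma}_{t_1})]\le u_n(t_0,x+X^{\sigma}_{t_0})\le u(t_0,x+X^{\sigma}_{t_0})$; letting $n\to\infty$ and using conditional monotone convergence, $E_{\sF_{t_0}}[u(t_1,x+X^{\sigma}_{t_1})]\le u(t_0,x+X^{\sigma}_{t_0})$, $\bP\otimes dx$-a.e., which via \eqref{P-sgm-exptn} is exactly \eqref{Eq Stoch-Potential}. Taking $t=T$ in the uniform convergence and using $u_n(T,\cdot)=0$ yields $u(T,x+X^{\sigma}_T)=0$ $\bP\otimes dx$-a.e., hence $u(T,\cdot)=0$; combined with the $L^2$-continuity established above this gives $\lim_{t\to T}u(t,\cdot)=0$ in $L^2$ a.s. Since $u$ is nonnegative, $\sigma$-quasi-continuous, lies in $\cS^2(L^2)$, satisfies \eqref{Eq0 Stocha-Potential} by assumption as well as \eqref{Eq Stoch-Potential}, and has vanishing terminal value, it is a regular $\sigma$-potential by definition. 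The main obstacle is precisely the Dini step: it is the continuity of the limiting trajectory $t\mapsto u(t,x+X^{\sigma}_t)$, assumed in the statement, that allows the monotone convergence to be promoted to uniform-in-$t$ convergence and thence, by domination, to the $\cS^2(L^2)$-convergence that drives every other property; without the continuity hypothesis the suprema need not converge and the argument breaks down.
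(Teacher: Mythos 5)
Your proof is correct. The first half — Dini's theorem to upgrade the monotone convergence to uniform-in-$t$ convergence of $u_n(\cdot,x+X^{\sigma}_\cdot)$, domination by $\sup_t|u(t,x+X^{\sigma}_t)|^2$ to get $E\int_{\bR^d}\sup_t|u-u_n|^2\,dx\to0$, and Lemma \ref{lem-S-quasi-contin} for the $\sigma$-quasi-continuity — is exactly the paper's opening move. Where you diverge is in the second half: the paper does \emph{not} verify the definition directly, but instead takes the couples $(\psi_n,K^n)$ furnished by Theorem \ref{thm stoch-Potential}, runs It\^o's formula to get uniform bounds on $E\int_{\bR^d}|K^n_T|^2\,dx$ and $\|\psi_n+Du_n\sigma\|_{\cL^2((L^2)^m)}$, shows both sequences are Cauchy, passes to the limit in the representation (i), and then invokes the converse direction of Theorem \ref{thm stoch-Potential}. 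You instead check the remaining items of the definition by hand: the supermartingale inequality for $u$ is obtained by conditional monotone convergence from $E_{\sF_{t_0}}[u_n(t_1,x+X^{\sigma}_{t_1})]\le u_n(t_0,x+X^{\sigma}_{t_0})\le u(t_0,x+X^{\sigma}_{t_0})$ together with \eqref{P-sgm-exptn} (and the translation invariance of $dx$ to move between $x$ and $x+X^{\sigma}_{t_0}$), and the terminal condition from $u_n(T,\cdot)=0$ plus the uniform convergence and the $L^2$-continuity of trajectories inherited through completeness of $\cS^2(L^2)$. This is a legitimately more elementary argument, entirely sufficient for the statement as posed, since the existence of the associated couple $(\psi,K)$ for $u$ then follows a posteriori from Theorem \ref{thm stoch-Potential}. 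What the paper's constructive detour buys is the explicit identification of that couple as the limit of $(\psi_n,K^n)$ (with $E\int_{\bR^d}\sup_t|K^n_t-K_t|^2\,dx\to0$), which is the form of the conclusion implicitly exploited afterwards, e.g.\ when identifying the Skorohod condition in Proposition \ref{prop-snell-potential}; with your route this identification would have to be recovered separately via the uniqueness in the Doob--Meyer decomposition.
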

\begin{proof}
First, Dini's Theorem yields that
\begin{align}
\lim_{n\rightarrow \infty}\int_{\bR^d}E  \sup_{t\in [0,T]}|u_n(t,x+X^{\sigma}_t)-u(t,x+X^{\sigma}_t)|^2 \,dx=0,
\end{align}
from which it follows by Lemma \ref{lem-S-quasi-contin} that $u$ is $\sigma$-quasi-continuous. Let $(\psi_n,K^n)$ be the couple associated with $u_n$. Then
\[
u_n(t,x+X^{\sigma}_t)=K_T^n(x)-K^n_t(x)-\int_t^T(\psi_n+ Du_n\sigma)(s,x+X^{\sigma}_s)\,dW_s,\quad t\in[0,T],
\]
and
\begin{align}
E\int_{\bR^d}|K_T^n(x)|^2\,dx=\|u_n(0)\|^2+E\int_0^T \|\psi_n(s)+ D u_n\sigma(s)\|^2  \,ds.
\label{eq-prop-monot-1}
\end{align}
By It\^o's formula, we have
\begin{align}
&|u_n(t,x+X^{\sigma}_t)|^2+E_{\sF_t}\int_t^T \left|(\psi_n+Du_n\sigma)(s,x+X^{\sigma}_s)\right|^2ds
\nonumber\\
=&\,2E_{\sF_t} \int_t^Tu_n(s,x+X^{\sigma}_s)\,dK_s^n(x) \nonumber\\
\leq& \frac{1}{\eps}
E_{\sF_t}\int_{\bR^d}
\sup_{s\in[t,T]}|u_n(s,x+X^{\sigma}_s)|^2dx
+\eps E_{\sF_t}\int_{\bR^d} |K^n_T(x)-K^n_t(x)|^2dx,\ \forall\, \eps>0\nonumber
\end{align}
which together with \eqref{eq-prop-monot-1} implies
\begin{align}
E
\int_0^T\|D u_n\sigma(s)+\psi_n(s)\|^2ds
+E\int_{\bR^d}|K_T^n(x)|^2dx
\leq
C E\int_{\bR^d}\sup_{s\in[0,T]} |u(s,x+X^{\sigma}_s)|^2dx\label{eq-prop-monotone-bound}
\end{align}
with $C$ being independent of $n$. For $n> l$, putting $(\delta_{nl}u,\delta_{nl}\psi,\delta_{nl}K)=(u_n-u_l,\psi_n-\psi_l,K^n-K^l)$ and applying It\^o's formula again, we have by the monotonicity of $u_n$ and \eqref{eq-prop-monotone-bound}
\begin{align}
&\|\delta_{nl}u(0)\|^2+ E\int_0^T \| D\delta_{nl}u\sigma(s)+\delta_{nl}\psi(s)\|^2  ds\nonumber\\
=\,&2E\int_Q\delta_{nl}u(s,x+X^{\sigma}_s)\,d\left(K^n_s-K^l_s\right)(x)dx\nonumber\\
\leq\,&
2E\int_Q\delta_{nl}u(s,x+X^{\sigma}_s)\,dK^n_s(x)dx\nonumber\\
\leq\,&2
\left(E\int_{\bR^d}\sup_{s\in[0,T]}|\delta_{nl}u(s,x+X^{\sigma}_s)|^2\right)^{1/2}\left( E\int_{\bR^d}|K_T^n(x)|^2dx \right)^{1/2}
\longrightarrow 0, \quad \text{as }n,l\rightarrow \infty.\nonumber
\end{align}
Denote by $\psi+Du\sigma$ the limit of $\psi_n+Du_n\sigma$ in $\cL^2((L^2)^m)$.
 On the other hand, since
\[
u_n(t,x+X^{\sigma}_s)+K_t^n(x)
=
u_n(0,x)+\int_0^t(\psi_n+Du_n\sigma)(s,x+X^{\sigma}_s)\,dW_s,
\]
it follows by Doob's inequality that
\[
E\int_{\bR^d}\sup_{t\in[0,T]}|\delta_{nl}K_t(x)|^2dx
\leq\,C\bigg\{
E\int_{\bR^d}\sup_{t\in[0,T]}|\delta_{nl}u(s,x+X^{\sigma}_s)|^2dx + E\int_0^T\!\!\!\|D \delta_{nl}u\sigma(s)+\delta_{nl}\psi(s)\|^2ds
\bigg\}
\]
which converges to zero as $n$ and $l$ tend to infinity. Denote by $K$ the limit of $K^n$. It follows that
\[
u(t,x+X^{\sigma}_t)=K_T(x)-K_t(x)-\int_t^T (\psi+Du\sigma)(s,x+X^{\sigma}_s)\,dW_s,\quad t\in[0,T].
\]
Hence, $u$ is a regular $\sigma$-potential by Theorem \ref{thm stoch-Potential}.
\end{proof}
In Proposition \ref{prop-monotone-potential}, we write directly the limit of the form $u(t,x+X^{\sigma}_t)$. In fact, this can be derived directly from the norm-equivalence relation \eqref{eq-equiv-norm} and the domination convergence theorem. In what follows, we will omit such kind of arguments for simplicity.

\medskip
   $({\mathcal O}1)$ \it $\xi\in\cL^2(L^2)$ with  $t\mapsto \xi(\omega,t,x+X^{\sigma}_t)$ being $\mathbb{P}\otimes{dx}$-a.e. continuous on $[0,T]$ and
   $$
   E\int_{\bR^d} \sup_{t\in[0,T]} |\xi(t,x+X^{\sigma}_t)|^2  dx<\infty.
   $$

   For each process $\xi$ satisfying $(\mathcal{O} 1)$, we define the Snell envelope $\cE( \xi	)$  by
   \begin{align}
   \cE_t(\xi)=\esssup_{\tau\in\mathscr{J}_t}E_{{\sF}_{t}}\left[ \xi({\tau},x+X^{\sigma}_{\tau}) 1_{\{\tau<T\}}\right] \label{Snell-envelope}
   \end{align}
   where
$$
\mathscr{J}_t=\{ \tau\in\mathscr{J}:\,t\leq \tau\leq T  \},
$$
with $\mathscr{J}$ being the set of all the stopping times dominated by $T$.

\begin{prop}\label{prop-snell-potential}
For each process $\xi$ satisfying $(\mathcal{O} 1)$ with $\xi(T)\leq 0$ $\mathbb{P}\otimes dx$-a.e., one has  $\cE_t( \xi	)=u(t,x+X^{\sigma}_t)$ with $u$ being a regular $\sigma$-potential associated with some couple $(\psi,\mu)$ and
\begin{align}
\int_{Q}(u-\xi)(t,x)\,\mu(dt,dx)=0,\quad \text{a.s.} 
\label{eq-prop-snell-potential}
\end{align}
(For convenience, we will write  $u=\cE(\xi)$ in what follows.)
\end{prop}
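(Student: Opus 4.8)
The plan is to construct $u$ by penalization and identify it with the Snell envelope fiber by fiber. For each $n\in\bN^+$, let $(u_n,\psi_n)$ be the solution of the penalized degenerate BSPDE
\begin{equation*}
  -du_n(t)=\Big[\text{tr}\big(\tfrac12\sigma\sigma'D^2u_n+\sigma D\psi_n\big)(t)+n(\xi-u_n)^+(t)\Big]\,dt-\psi_n(t)\,dW_t,\quad u_n(T)=0,
\end{equation*}
whose existence and uniqueness follow from the appendix (after the It\^o--Wentzell transform to the trivial form, via Proposition \ref{prop-banah-BSDE}) together with a standard fixed-point argument, the penalty $n(\xi-u_n)^+$ being Lipschitz in $u_n$ in the $L^2(\bR^d)$-norm and lying in $\cL^2(L^2)$. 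Since the driver $f_n:=n(\xi-u_n)^+$ is nonnegative, representation \eqref{P-BSPDE-repn} gives $u_n\geq0$ and $P^{\sigma}_su_n\leq u_n$, so each $u_n$ is a regular $\sigma$-potential, and by Proposition \ref{prop-quasi-cont-BSPDE} it is $\sigma$-quasi-continuous. The comparison principle shows that $u_n$ is nondecreasing in $n$.

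First I would pass to the fibers. Writing $\xi_t(x):=\xi(t,x+X^{\sigma}_t)$ and $Y^n_t(x):=u_n(t,x+X^{\sigma}_t)$, Proposition \ref{prop-BSPDE-Flow} shows that, $\bP\otimes dx$-a.e., the pair $(Y^n,Z^n)$ with $Z^n:=(\psi_n+Du_n\sigma)(\cdot,\cdot+X^{\sigma}_{\cdot})$ solves the penalized BSDE with zero terminal value, obstacle $\xi_t(x)$, and driver $n(\xi_t(x)-Y^n_t(x))^+$. Since $\xi_t(x)$ is continuous with $\xi_T(x)\leq0$ by $(\mathcal{O}1)$, the penalization theory for reflected BSDEs \cite[pp.~719--723]{El_Karoui-reflec-1997} yields that $Y^n_t(x)$ increases to the Snell envelope $\cE_t(\xi)$, that $K^n_t(x):=\int_0^tf_n(s,x+X^{\sigma}_s)\,ds$ converges to a continuous increasing process $K_t(x)$, that $Z^n$ converges, and that the limiting triple satisfies the Skorokhod condition $\int_0^T(\cE_t(\xi)-\xi_t(x))\,dK_t(x)=0$. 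In particular $\cE_t(\xi)$ is continuous in $t$, and the supermartingale bound $0\leq\cE_t(\xi)\leq E_{\sF_t}[\sup_{s}\xi_s(x)^+]$ together with Doob's inequality and $(\mathcal{O}1)$ gives $E\int_{\bR^d}\sup_{t\in[0,T]}|\cE_t(\xi)|^2\,dx<\infty$.

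Next I would let $u$ be the random field determined by $u(t,x+X^{\sigma}_t)=\cE_t(\xi)$. By the previous step $u_n$ increases to $u$, the process $u(t,x+X^{\sigma}_t)$ is $\bP\otimes dx$-a.e.\ continuous, and the integrability bound just established holds; hence Proposition \ref{prop-monotone-potential} applies and $u$ is a regular $\sigma$-potential. Denoting by $(\psi,K)$ the limits of $(\psi_n,K^n)$ furnished by that proposition and by $\mu$ the associated random Radon measure obtained through relation (iv) of Theorem \ref{thm stoch-Potential}, we obtain the couple $(\psi,\mu)$ with which $u$ is associated, and $\cE_t(\xi)=u(t,x+X^{\sigma}_t)$ as required.

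It remains to establish the complementarity relation \eqref{eq-prop-snell-potential}. Since $u$ satisfies the hypotheses of Remark \ref{rmk-potentl-est-K} (being a regular $\sigma$-potential) and $\xi$ satisfies $(\mathcal{O}1)$ by assumption, the integral $\int_Q(u-\xi)\,d\mu$ is well defined. Using representation (iv),
\begin{equation*}
  \int_Q(u-\xi)(t,x)\,\mu(dt,dx)=\int_{\bR^d}\int_0^T\big(u(t,x+X^{\sigma}_t)-\xi_t(x)\big)\,dK_t(x)\,dx=\int_{\bR^d}\int_0^T\big(\cE_t(\xi)-\xi_t(x)\big)\,dK_t(x)\,dx,
\end{equation*}
which vanishes by the fiberwise Skorokhod condition. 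The main obstacle is the passage to the limit along the penalization: one must control $K^n$, $Z^n$ and the measures simultaneously and verify the hypotheses of Proposition \ref{prop-monotone-potential}, and one must ensure both that the Skorokhod condition survives the limit and that $\int_Q(u-\xi)\,d\mu$ is meaningful, which is exactly where the $\sigma$-quasi-continuity of $u$ and the flow-continuity of $\xi$ granted by $(\mathcal{O}1)$ are indispensable.
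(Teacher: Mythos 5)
Your proposal is correct and follows essentially the same route as the paper: the same penalization $n(\xi-u_n)^+=n(u_n-\xi)^-$, the same fiberwise reduction via Proposition \ref{prop-BSPDE-Flow} to the penalized reflected BSDE of El Karoui et al., the same passage to the limit through Proposition \ref{prop-monotone-potential}, and the same derivation of the complementarity relation from relation (iv) of Theorem \ref{thm stoch-Potential} and the fiberwise Skorokhod condition. The only difference is that you spell out more of the intermediate verifications (Lipschitz continuity of the penalty, the Doob-inequality bound for the sup-integrability of the Snell envelope) that the paper leaves implicit.
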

\begin{proof}
For each $n\in\bN^+$, let $(u_n,\psi_n)\in\cS^2(L^2)\times\cL^2((H_2^{-1})^m)$ be the unique solution of BSPDE:
   \begin{equation}\label{bspde-linear-lem-snell}
  \left\{\begin{array}{l}
  \begin{split}
    -du_n(t)&=\left[\frac{1}{2}\text{tr}\left(\sigma\sigma'D^2 u_n(t)+2\sigma D\psi_n(t)\right)
    +n\left(u_n(t)-\xi(t)\right)^-\right]\,dt- \psi_n(t)\,dW_t;\\
    u_n(T)  & = 0.
    \end{split}
  \end{array}\right.
 \end{equation}
By Propositions \ref{prop-BSPDE-Flow} and \ref{prop-quasi-cont-BSPDE}, there holds the following BSDE representation
$$
u_n(t,x+X^{\sigma}_t)=\int_t^Tn\left(u_n-\xi\right)^-(s,x+X^{\sigma}_s)\,ds-\int_t^T(\psi_n+D u_n\sigma)(s,x+X^{\sigma}_s)\,dW_s, \,\,t\in[0,T],
$$
and $u_n$ is $\sigma$-quasi-continuous. Therefore, $u_n$ is a regular $\sigma$-potential. On the other hand,  by the penalization method for the reflected BSDE  \cite[Page 719-723]{El_Karoui-reflec-1997}, $u_n$ converges up to $u$ with $u(t,x+X^{\sigma}_t)=\cE_t( \xi	)$ and $u(t,x+X^{\sigma}_t)$ is a continuous process. In view of $(\mathcal{O}1)$ and \eqref{eq-prop-snell-potential}, it is easy to check $E\int_{\bR^d}  \sup_{t\in [0,T]}|u(t,x+X^{\sigma}_t)|^2 dx<\infty$. Hence, $u$ is a regular $\sigma$-potential associated with some couple $(\psi,\mu)$ by Proposition \ref{prop-monotone-potential}, and in view of  (iii) and (iv) in Theorem \ref{thm stoch-Potential}, we further obtain \eqref{eq-prop-snell-potential} from the solution for reflected BSDEs.
\end{proof}

An immediate consequence of Proposition \ref{prop-snell-potential} is the following corollary, which shows that the regular $\sigma$-potential can be charaterized as its own Snell envelope. 

\begin{cor}\label{cor-optim-stop}
Under the same hypothesis of Proposition \ref{prop-snell-potential}, assume further that $\xi(t,x+X^{\sigma}_t)$ is a supermartingale for almost every $x\in\bR^d$ with $\xi(T)=0$. Then one has $\cE(\xi)=\xi$.
\end{cor}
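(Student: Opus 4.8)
The plan is to show that, for almost every $x\in\bR^d$, the continuous supermartingale $Y_t(x):=\xi(t,x+X^{\sigma}_t)$ coincides with its own Snell envelope $\cE_t(\xi)$; the identity $\cE(\xi)=\xi$ then follows by translating this pathwise equality back through the norm-equivalence \eqref{eq-equiv-norm}. First I would record two elementary facts about $Y$. Since $Y$ is a supermartingale with $Y_T=\xi(T,\cdot+X^{\sigma}_T)=0$, optional sampling gives $Y_t\geq E_{\sF_t}[Y_T]=0$, so $Y$ is nonnegative; and because $Y_T=0$, the truncation in \eqref{Snell-envelope} is vacuous, i.e. $Y_{\tau}1_{\{\tau<T\}}=Y_{\tau}$ for every $\tau\in\mathscr{J}$ (the two sides agree on $\{\tau<T\}$ and both vanish on $\{\tau=T\}$). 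Consequently
$$
\cE_t(\xi)=\esssup_{\tau\in\mathscr{J}_t}E_{\sF_t}\!\left[Y_{\tau}\right].
$$

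Next I would establish the matching bounds. For the upper bound, the integrability furnished by $(\mathcal{O}1)$, namely $E\int_{\bR^d}\sup_{t\in[0,T]}|Y_t(x)|^2\,dx<\infty$, shows that for a.e. $x$ the family $\{Y_{\tau}\}_{\tau\in\mathscr{J}}$ is dominated in $L^2$, so the optional sampling theorem for the continuous supermartingale $Y$ yields $E_{\sF_t}[Y_{\tau}]\leq Y_t$ for every $\tau\in\mathscr{J}_t$; taking the essential supremum gives $\cE_t(\xi)\leq Y_t$. For the lower bound, the constant time $\tau\equiv t$ belongs to $\mathscr{J}_t$, whence $\cE_t(\xi)\geq E_{\sF_t}[Y_t]=Y_t$. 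Combining the two bounds, $\cE_t(\xi)=Y_t=\xi(t,x+X^{\sigma}_t)$ for all $t\in[0,T]$, $\mathbb{P}\otimes dx$-a.e.

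Finally, Proposition \ref{prop-snell-potential} identifies $\cE(\xi)$ with a regular $\sigma$-potential $u$ satisfying $u(t,x+X^{\sigma}_t)=\cE_t(\xi)$, so the previous step gives $u(t,x+X^{\sigma}_t)=\xi(t,x+X^{\sigma}_t)$, $\mathbb{P}\otimes dx$-a.e.; applying \eqref{eq-equiv-norm} to $h=u-\xi$ then forces $\|u-\xi\|_{\cL^2(L^2)}=0$, that is $\cE(\xi)=u=\xi$. No step presents a genuine difficulty here: the only points requiring care are the justification of optional sampling (covered by the square-integrability in $(\mathcal{O}1)$) and the admissibility of the constant stopping time $\tau\equiv t$ in $\mathscr{J}_t$, while the removal of the indicator $1_{\{\tau<T\}}$ is precisely where the hypothesis $\xi(T)=0$ enters.
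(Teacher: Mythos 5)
Your proof is correct and is exactly the argument the paper intends: the paper states this corollary without proof as an ``immediate consequence'' of Proposition \ref{prop-snell-potential}, the point being precisely that the Snell envelope of a (continuous, class (D)) supermartingale obstacle equals the obstacle itself, which you establish via optional sampling for the upper bound and the constant stopping time $\tau\equiv t$ for the lower bound, with the hypothesis $\xi(T)=0$ removing the indicator $1_{\{\tau<T\}}$. The final passage from the pathwise identity $u(t,x+X^{\sigma}_t)=\xi(t,x+X^{\sigma}_t)$ to $u=\xi$ via the measure-preserving shift in \eqref{eq-equiv-norm} is also the step the paper leaves implicit, so nothing is missing.
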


\begin{rmk}\label{rmk-DRBSPDE}
If instead of \eqref{Snell-envelope}, we consider the following optimal stopping problem:
   \begin{align*}
   u(t,x+X^{\sigma}_t)=\esssup_{\tau\in\mathscr{J}_t}E_{{\sF}_{t}}\left[ \int_t^{\tau} H(s,x+X_s^{\sigma})\,ds
   +\xi({\tau},x+X^{\sigma}_{\tau}) 1_{\{\tau<T\}} +\Psi(x+X^{\sigma}_{T})1_{\{\tau=T\}}\right]
   \end{align*}
with $(H,\Psi)\in\cL^2(L^2) \times L^2(\Omega,\sF_T;L^2)$ and $\xi(T)\leq \Psi$ $\mathbb{P}\otimes dx$-a.e. Set $(\tilde{u},\tilde\psi)=\mathbb{S}(\sigma,H,\Psi)$ and $\hat{u}=u-\tilde u$. Then
   \begin{align*}
   \hat{u}(t,x+X^{\sigma}_t)=\cE_t(\xi-\tilde{u})=\esssup_{\tau\in\mathscr{J}_t}E_{{\sF}_{t}}\left[
   (\xi-\tilde{u})({\tau},x+X^{\sigma}_{\tau}) 1_{\{\tau<T\}} \right],
   \end{align*}
   by Proposition \ref{prop-snell-potential}, $\hat{u}$ is a regular $\sigma$-potential associated with some couple $(\hat{\psi},\mu)$, and
    \[\int_{Q}(\hat{u}+\tilde u-\xi)(t,x)\,\mu(dt,dx)=0,\quad \text{a.s.}\]
Putting $\psi=\hat{\psi}+\tilde\psi$, we conclude that the triple $(u,\psi,\mu)$ is a solution of the following degenerate reflected BSPDE:
{\small
\begin{equation}\label{DRBSPDE}
  \left\{\begin{array}{l}
  \begin{split}
  -du(t,x)=\,&\displaystyle \Bigl[\frac{1}{2}\text{tr}\left(\sigma\sigma'D^2 u+2\sigma D\psi\right)(t)+H(t,x)
                \Bigr]\, dt+\mu(dt,x)
           -\psi(t,x)\, dW_{t},\,(t,x)\in Q;\\
    u(T,x)=\, &\Psi(x), \quad x\in\bR^d;\\
    u(t,x)\geq\,& \xi(t,x),\,\,d\mathbb{P}\otimes dt\otimes dx-a.e.;\\
    \int_Q \big( u(t,x)&-\xi(t,x) \big)\,\mu(dt,dx)=0,\,\text{a.s.} \quad \quad \textrm{(Skorohod condition)}
    \end{split}
  \end{array}\right.
\end{equation}
}
in the following sense:

(1) $(u,\psi)\in\cS^2(L^2)\times\cL^2((H^{-1}_2)^m)$ and $\mu$ is a regular $\sigma$-measure;

(2) reflected BSPDE \eqref{DRBSPDE} holds in the weak sense, i.e.,
for each $\varphi\in\mathcal{D}_T$ and $t\in[0,T]$
\begin{equation*}
  \begin{split}
    &\langle u(t),\,\varphi(t) \rangle +\int_t^T\left[\langle u(s),\,\partial_s \varphi(s)   \rangle + \langle D \varphi(s),\,\frac{1}{2}\sigma\sigma' D u(s)
        +\sigma \psi(s) \rangle \right]\,ds\\
    =\,&
    \langle \Psi,\varphi(T) \rangle+\int_t^T\langle H(s),\,\varphi(s) \rangle\,ds
    +\int_{[t,T]\times\bR^d}\varphi(s,x)\mu(ds,dx)-\int_t^T\langle \varphi(s),\,\psi(s)\,dW_s\rangle,\ \text{a.s.} ;
  \end{split}
\end{equation*}

(3) $u$ is $\sigma$-quasi-continuous, $u(t,x)\geq \xi(t,x)$ $\bP\otimes dt\otimes dx$-a.e. and the Skorohod condition of  \eqref{DRBSPDE} holds.

In view of the uniqueness of solution for reflected BSDEs (see \cite{El_Karoui-reflec-1997}), one verifies the uniqueness of solution for reflected BSPDE \eqref{DRBSPDE}. In a similar way to Qiu and Wei \cite{QiuWei-RBSPDE-2013}, one can obtain the existence and uniqueness of solution for reflected BSPDE  \eqref{DRBSPDE} associated with the external force $H$ depending on $u$ and $\psi+Du\sigma$ in a nonlinear fashion. This seems to be new for reflected BSPDEs by dropping the super-parabolicity requirements in \cite{QiuWei-RBSPDE-2013}. For the literature on reflected BSPDEs, we refer to Qiu and Wei \cite{QiuWei-RBSPDE-2013} and references therein.
\end{rmk}


\section{Solvability of stochastic HJB equation \eqref{SHJB}}

\subsection{Associated control problem}

Letting $\cU$ be the admissible control set, we consider the following control problem
\begin{align}
\inf_{\sigma\in\cU}E\left[\int_0^T\!\! f(s,x+X^{\sigma}_s,\sigma_s)\,ds +G(x+X^{\sigma}_T) \right] \label{Control-probm}
\end{align}
subject to $X_t^{\sigma}=\int_0^t\sigma_s\,dW_s$, $t\in[0,T]$. Then the dynamic cost functional is defined by
\begin{align}
J(t,x;\sigma)=E_{\sF_t}\left[\int_t^T\!\! f(s,x+X^{\sigma}_s-X^{\sigma}_t,\sigma_s)\,ds +G(x+X^{\sigma}_T-X_t^{\sigma}) \right],\ \ t\in[0,T] \label{eq-cost-funct}
\end{align}
and the value function is given by
\begin{align}
V(t,x)=\essinf_{\sigma\in\cU}J(t,x;\sigma),\quad t\in[0,T].
\label{eq-value-func}
\end{align}
\begin{rmk}\label{rmk-infn}
For any given $(t,x,\bar\sigma)\in[0,T]\times\bR^d\times \cU$,  set
$$
\mathbb{J}(t,x,\bar\sigma)=\left\{
J(t,x+X^{\bar\sigma}_t;\sigma): J(t,x+X^{\bar\sigma}_t;\sigma)\leq J(t,x+X^{\bar\sigma}_t;\bar\sigma),\,\,\sigma\in\cU
\right\}.
$$
Then $\mathbb{J}(t,x,\bar\sigma)$ is nonempty and for any $J(t,x+X^{\bar\sigma}_t;\tilde\sigma),J(t,x+X^{\bar\sigma}_t;\check\sigma)\in \mathbb{J}(t,x,\bar\sigma)$, putting 
$$
\gamma_s=\bar\sigma_s1_{\{s\in[0,t)\}}+
\left(\tilde\sigma_s 1_{\{J(t,x+X^{\bar\sigma}_t;\tilde\sigma)\leq J(t,x+X^{\bar\sigma}_t;\check\sigma)\}}
+  \check\sigma_s 1_{\{J(t,x+X^{\bar\sigma}_t;\tilde\sigma)> J(t,x+X^{\bar\sigma}_t;\check\sigma)\}}
\right)1_{\{s\in[t,T]\}},
$$
 one has $\gamma\in\cU$ and 
$$
J(t,x+X^{\bar\sigma}_t;\tilde\sigma)\wedge J(t,x+X^{\bar\sigma}_t;\check\sigma)=
J(t,x+X^{\bar\sigma}_t;\gamma)\in \mathbb{J}(t,x,\bar\sigma).
$$
Hence, by \cite[Theorem A.3]{Karatz-Shreve-1998MMF}, there exists  $\{\sigma^n\}_{n\in\bN^+}\subset \cU$ such that
$J(t,x+X^{\bar\sigma}_t;\sigma^n)$ converges decreasingly to $V(t,x+X^{\bar\sigma}_t)$ with probability 1.
\end{rmk}

\begin{lem}\label{lem-value-func}
Under assumption $(\cA 1)$, we have

(i) For any  $(t,x,\sigma)\in[0,T]\times\bR^d\times\cU$, there exists $\bar{\sigma}\in\cU$ such that
$$
E\left[ J(t,x+X^{\sigma}_t;\bar{\sigma})-V(t,x+X^{\sigma}_t)\right]<\eps;
$$

(ii) For each $(\bar{\sigma},x)\in\cU\times \bR^d$, $\left\{J(t,x+X_t^{\bar{\sigma}};\bar{\sigma})-V(t,x+X_t^{\bar{\sigma}})\right\}_{t\in[0,T]}$ is a supermartingale, i.e.,  for any $0\leq t\leq \tilde{t}\leq T$,
\begin{align}
V(t,x+X_t^{\bar{\sigma}})
\leq E_{\sF_t}V(\tilde{t},x+X_{\tilde{t}}^{\bar{\sigma}}) + E_{\sF_t}\int_t^{\tilde{t}}f(s,x+X_s^{\bar{\sigma}},\bar\sigma_s)\,ds,\,\,\,\text{a.s.};\label{eq-vfunc-supM}
\end{align}

(iii) For each $(\bar{\sigma},x)\in\cU\times \bR^d$, $\left\{V(s,x+X_s^{\bar{\sigma}})\right\}_{s\in[0,T]}$ is a continuous process.

(iv) For each $\sigma\in\cU$,
\begin{align}
\int_{\bR^d}E\sup_{t\in[0,T]}|V(t,x+X_t^{\sigma})|^2\,dx<\infty,
\label{eq-iv-lemm-value-funct}
\end{align}
and there exists $L_1>0$ such that for any $\sigma\in\cU$
$$
|V(t,x)-V(t,y)|+|J(t,x;\sigma)-J(t,y;\sigma)|\leq L_1|x-y|^{\alpha},\,\,\,\text{a.s.},\quad \forall\,x,y\in\bR^d.
$$

\end{lem}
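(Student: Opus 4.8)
The plan is to prove the four assertions in the order (iv), (i), (ii), (iii), because the a priori estimates and the approximation property feed into the supermartingale property and, above all, into the time-continuity, which I expect to be the crux. For (iv) I would first establish the spatial modulus for the cost functional itself: subtracting the defining expression \eqref{eq-cost-funct} at $x$ and $y$ and invoking $(\cA1)$ pointwise gives $|J(t,x;\sigma)-J(t,y;\sigma)|\le L(T+1)|x-y|^\alpha$ uniformly in $(\omega,t,\sigma)$, and since $V=\essinf_\sigma J(\cdot\,;\sigma)$ the same modulus passes to $V$ with $L_1:=L(T+1)$. For the integrability \eqref{eq-iv-lemm-value-funct} I would use $0\le V(t,x+X^\sigma_t)\le J(t,x+X^\sigma_t;\sigma)=E_{\sF_t}\big[\int_t^T f(s,x+X^\sigma_s,\sigma_s)\,ds+G(x+X^\sigma_T)\big]$, the cancellation of $X^\sigma_t$ being exactly the flow structure of Proposition \ref{prop-BSPDE-Flow}; bounding $f\le g$ dominates this by the martingale $E_{\sF_t}[\Xi]$ with $\Xi:=\int_0^T g(s,x+X^\sigma_s)\,ds+G(x+X^\sigma_T)$, and Doob's $L^2$-inequality together with $(\int_0^T g\,ds)^2\le T\int_0^T g^2\,ds$ and the translation invariance \eqref{eq-equiv-norm} reduces everything to $T\|g\|^2_{\cL^2(L^2)}+E\|G\|^2<\infty$.

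Assertion (i) is the routine consequence of the downward-directedness already recorded in Remark \ref{rmk-infn}: one has a sequence $\{\sigma^n\}\subset\cU$ with $J(t,x+X^\sigma_t;\sigma^n)\downarrow V(t,x+X^\sigma_t)$ a.s., and since it is nonnegative and dominated by $J(t,x+X^\sigma_t;\sigma^1)$ (integrable by the estimate used for (iv)), dominated convergence gives $E[J(t,x+X^\sigma_t;\sigma^n)-V(t,x+X^\sigma_t)]\to0$, so one takes $\bar\sigma=\sigma^n$ for $n$ large. For (ii) I would prove the dynamic-programming inequality \eqref{eq-vfunc-supM} by pasting: given $\bar\sigma$ and a near-optimal sequence $\{\sigma^n\}$ for the problem started at $\tilde t$ from $x+X^{\bar\sigma}_{\tilde t}$ (from (i)), set $\gamma^n=\bar\sigma$ on $[0,\tilde t)$ and $\gamma^n=\sigma^n$ on $[\tilde t,T]$, so $\gamma^n\in\cU$; splitting the integral in $J(t,x+X^{\bar\sigma}_t;\gamma^n)$ at $\tilde t$ and using the tower property and the flow cancellation yields $J(t,x+X^{\bar\sigma}_t;\gamma^n)=E_{\sF_t}\int_t^{\tilde t}f(s,x+X^{\bar\sigma}_s,\bar\sigma_s)\,ds+E_{\sF_t}[J(\tilde t,x+X^{\bar\sigma}_{\tilde t};\sigma^n)]$, and bounding the left side below by $V(t,x+X^{\bar\sigma}_t)$ and letting $n\to\infty$ gives \eqref{eq-vfunc-supM}. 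The same flow identity for $\bar\sigma$ shows $J(t,x+X^{\bar\sigma}_t;\bar\sigma)+\int_0^t f(s,x+X^{\bar\sigma}_s,\bar\sigma_s)\,ds$ is a continuous martingale; combined with this, \eqref{eq-vfunc-supM} is equivalent to $M_t:=J(t,x+X^{\bar\sigma}_t;\bar\sigma)-V(t,x+X^{\bar\sigma}_t)$ being a nonnegative supermartingale with $M_T=0$.

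The main obstacle is (iii): the essential infimum in \eqref{eq-value-func} is a priori only upper semicontinuous along the flow, so $V$, equivalently $M$, could jump. From (ii), $M$ has a c\`adl\`ag modification; since the Brownian filtration carries only continuous martingales, the Doob--Meyer martingale part of $M$ is continuous, every jump of $M$ occurs at a predictable time and equals $-\Delta A$ for the predictable increasing compensator $A$, and $J$ is continuous so $\Delta V=-\Delta M$. I would rule these jumps out directly. Fix a predictable time $\tau$ with an announcing sequence $\tau_k\uparrow\tau$. For every $\sigma$ coinciding with $\bar\sigma$ on $[0,\tau_k]$ the flow identity gives $J(\tau_k,x+X^{\bar\sigma}_{\tau_k};\sigma)\ge E_{\sF_{\tau_k}}[V(\tau,y_\sigma)]$ with $y_\sigma=x+X^{\bar\sigma}_{\tau_k}+X^\sigma_\tau-X^\sigma_{\tau_k}$; because $U$ is bounded, $E_{\sF_{\tau_k}}|X^\sigma_\tau-X^\sigma_{\tau_k}|^2\le K^2 E_{\sF_{\tau_k}}[\tau-\tau_k]$ \emph{uniformly} in $\sigma$, so the Hölder estimate of (iv) lets me replace $V(\tau,y_\sigma)$ by $V(\tau,x+X^{\bar\sigma}_\tau)$ up to an error $R_k$ that is independent of $\sigma$ and vanishes as $k\to\infty$. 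Taking $\essinf_\sigma$ and passing to the limit yields $V_{\tau-}\ge E_{\sF_{\tau-}}[V_\tau]$, while the supermartingale property of $M$ at the predictable time $\tau$ gives the reverse $V_{\tau-}\le E_{\sF_{\tau-}}[V_\tau]$; hence $V_{\tau-}=E_{\sF_{\tau-}}[V_\tau]$.

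Finally, since the Brownian filtration is quasi-left-continuous, $\sF_{\tau-}=\sF_\tau$ at the predictable time $\tau$, so $V_\tau$ is $\sF_{\tau-}$-measurable and $E_{\sF_{\tau-}}[V_\tau]=V_\tau$; therefore $V_{\tau-}=V_\tau$ and $V$ has no jump at $\tau$. As the jumps of $M$ are exhausted by predictable times, $A$ is continuous, whence $M$ and therefore $V(\cdot,x+X^{\bar\sigma}_\cdot)=J(\cdot,x+X^{\bar\sigma}_\cdot;\bar\sigma)-M$ are continuous, which is (iii). The two points I would watch carefully are the uniformity in $\sigma$ of the Hölder error $R_k$ (this is precisely what the boundedness of $U$ buys and what legitimizes interchanging the essential infimum with the limit $k\to\infty$), and the measurability/section arguments needed to carry the predictable-jump analysis through for $\bP\otimes dx$-a.e.\ $x$ simultaneously.
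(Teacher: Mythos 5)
Your treatment of (i), (ii) and (iv) is essentially the paper's: downward-directedness from Remark \ref{rmk-infn} plus monotone/dominated convergence for (i), the pasting argument for \eqref{eq-vfunc-supM}, and domination by $J(\cdot;\sigma)\le E_{\sF_\cdot}[\Xi]$ with Doob's inequality for \eqref{eq-iv-lemm-value-funct} together with the direct H\"older computation. For (iii) you take a genuinely different route. The paper is elementary and quantitative: it proves the two-sided bound
$E_{\sF_t}V(\tilde t,x+X^{\bar\sigma}_{\tilde t})-C(\tilde t-t)^{\alpha/2}\le V(t,x+X^{\bar\sigma}_t)\le E_{\sF_t}V(\tilde t,x+X^{\bar\sigma}_{\tilde t})+E_{\sF_t}\int_t^{\tilde t}g(s,x+X^{\bar\sigma}_s)\,ds$
(the lower bound being exactly your ``uniform-in-$\sigma$ H\"older error'' obtained from the boundedness of $U$), deduces continuity of $t\mapsto EV(t,x+X^{\bar\sigma}_t)$ and hence right-continuity by supermartingale regularization, and gets left-continuity by letting $t\uparrow\tilde t$ in the sandwich, since both bounding processes are continuous in $t$ by martingale representation in the Brownian filtration. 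Your route instead invokes Doob--Meyer, continuity of Brownian martingales, predictable jump times, announcing sequences and quasi-left-continuity of the filtration. It is workable and its core estimate ($R_k$ uniform in $\sigma$) is identical to the paper's, but it is considerably heavier for the same payoff; the paper's sandwich disposes of left-jumps without ever mentioning predictable times.

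There is one step in your (iii) that, as written, would fail: ``From (ii), $M$ has a c\`adl\`ag modification.'' The supermartingale property alone does not give this --- a supermartingale admits a right-continuous modification (under the usual conditions) if and only if its mean function $t\mapsto EM_t$ is right-continuous, and a deterministic decreasing function with a downward jump is a counterexample to the unconditional claim. Without this, the Doob--Meyer decomposition and the predictable-jump analysis apply only to the regularized process $M_{t+}$, which you have not yet identified with $V$ along the flow. The gap is fillable from ingredients you already have: taking expectations in \eqref{eq-vfunc-supM} and using $0\le f\le g$ gives $0\le EV(t,x+X^{\bar\sigma}_t)-EV(s,x+X^{\bar\sigma}_s)\le E\int_t^{s}g(r,x+X^{\bar\sigma}_r)\,dr\le (s-t)^{1/2}\|g(\cdot,x+X^{\bar\sigma}_\cdot)\|_{L^2(\Omega\times[0,T])}$ for $t\le s$, which yields continuity of the mean for $dx$-a.e.\ $x$. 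But note that this missing computation is precisely (half of) the estimate the paper makes the centerpiece of its proof of (iii), so once you supply it you are most of the way to the shorter argument anyway.
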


\begin{proof}
From Remark \ref{rmk-infn}, assertion (i) follows obviously. Again by Remark \ref{rmk-infn}, there exists $\{\sigma^n\}_{n\in\bN^+}\subset \cU$ such that $J(\tilde t,x+X^{\bar\sigma}_{\tilde t};\sigma^n)$ converges decreasingly to $V(\tilde t,x+X^{\bar\sigma}_{\tilde t})$ with probability 1. Therefore, we have
{\small
\begin{align*}
&E_{\sF_t}
V(\tilde{t},x+X_{\tilde{t}}^{\bar{\sigma}})
+ E_{\sF_t}\int_t^{\tilde{t}}f(s,x+X_s^{\bar{\sigma}},\bar{\sigma}_s)\,ds
\\
=\,&
E_{\sF_t}
\lim_{n\rightarrow \infty} 
J(\tilde t,x+X^{\bar\sigma}_{\tilde t};\sigma^n)
+ E_{\sF_t}\int_t^{\tilde{t}}f(s,x+X_s^{\bar{\sigma}},\bar{\sigma}_s)\,ds
\\
=\,&
\lim_{n\rightarrow \infty} 
E_{\sF_t}J(\tilde t,x+X^{\bar\sigma}_{\tilde t};\sigma^n)
+ E_{\sF_t}\int_t^{\tilde{t}}f(s,x+X_s^{\bar{\sigma}},\bar{\sigma}_s)\,ds
\\
=\,&
\lim_{n\rightarrow \infty}
E_{\sF_t}
\left[
\int_{\tilde{t}}^T
f(s,x+X_{\tilde{t}}^{\bar{\sigma}}+X_s^{\sigma^n}-X_{\tilde{t}}^{{\sigma}^n},\sigma^n_s)\,ds+
\int_t^{\tilde{t}}f(s,x+X_s^{\bar{\sigma}},\bar{\sigma}_s)\,ds+G(x+X_{\tilde{t}}^{\bar{\sigma}}+X_T^{\sigma^n}-X_{\tilde{t}}^{{\sigma^n}})
\right]\\
\geq\,&
\essinf_{\sigma\in\cU}E_{\sF_t}
\left[
\int_{t}^T
f(s,x+X_{t}^{\bar{\sigma}}+X_s^{\sigma}-X_{t}^{{\sigma}},\sigma_s)\,ds +G(x+X_{t}^{\bar{\sigma}}+X_T^{\sigma}-X_{t}^{{\sigma}})
\right]
\\
=\,&V(t,x+X_t^{\bar{\sigma}}),\quad \text{a.s.,}
\end{align*}
}
which verifies \eqref{eq-vfunc-supM} as well as assertion (ii).

 Then we have for $0\leq t\leq \tilde{t}\leq T$,
{\small
\begin{align*}
&E_{\sF_t}\int_t^{\tilde{t}}g(s,x+X_s^{\bar\sigma})\,ds\quad\text{(see assumption $(\cA 1)$)}\\
&\geq E_{\sF_t}\int_t^{\tilde{t}}f(s,x+X_s^{\bar{\sigma}},\bar{\sigma}_s)\,ds
\\
&\geq\,V(t,x+X_t^{\bar{\sigma}})-E_{\sF_t}V(\tilde{t},x+X_{\tilde{t}}^{\bar{\sigma}})\\
&=
\essinf_{\sigma\in\cU}
E_{\sF_t}\left[
\int_{t}^T
f(s,x+X_{t}^{\bar{\sigma}}+X_s^{\sigma}-X_{t}^{{\sigma}},\sigma_s)\,ds +G(x+X_{t}^{\bar{\sigma}}+X_T^{\sigma}-X_{t}^{{\sigma}})
\right]\\
&\quad-E_{\sF_t}\essinf_{\sigma\in\cU}
E_{\sF_{\tilde{t}}}\left[
\int_{\tilde{t}}^T
f(s,x+X_{\tilde{t}}^{\bar{\sigma}}+X_s^{\sigma}-X_{\tilde{t}}^{{\sigma}},\sigma_s)\,ds +G(x+X_{\tilde{t}}^{\bar{\sigma}}+X_T^{\sigma}-X_{\tilde{t}}^{{\sigma}})
\right]
\\
&\geq
\essinf_{\sigma\in\cU}
E_{\sF_t}\left[
\int_{t}^T
f(s,x+X_{t}^{\bar{\sigma}}+X_s^{\sigma}-X_{t}^{{\sigma}},\sigma_s)\,ds +G(x+X_{t}^{\bar{\sigma}}+X_T^{\sigma}-X_{t}^{{\sigma}})
\right]\\
&\quad-\essinf_{\sigma\in\cU}
E_{\sF_t}\left[
\int_{\tilde{t}}^T
f(s,x+X_{\tilde{t}}^{\bar{\sigma}}+X_s^{\sigma}-X_{\tilde{t}}^{{\sigma}},\sigma_s)\,ds +G(x+X_{\tilde{t}}^{\bar{\sigma}}+X_T^{\sigma}-X_{\tilde{t}}^{{\sigma}})
\right]
\\
&\geq
\essinf_{\sigma\in\cU}\bigg\{
E_{\sF_t}\int_{t}^{\tilde{t}}
f(s,x+X_{t}^{\bar{\sigma}}+X_s^{\sigma}-X_{t}^{{\sigma}},\sigma_s)\,ds\\
&\quad\quad\quad
+E_{\sF_t}\int_{\tilde{t}}^T
\left(f(s,x+X_{t}^{\bar{\sigma}}+X_s^{\sigma}-X_{t}^{{\sigma}},\sigma_s)
-f(s,x+X_{\tilde{t}}^{\bar{\sigma}}+X_s^{\sigma}-X_{\tilde{t}}^{{\sigma}},\sigma_s)\right)\,ds
\\
&\quad\quad\quad
+
E_{\sF_t}\left[
G(x+X_{t}^{\bar{\sigma}}+X_T^{\sigma}-X_{t}^{{\sigma}})-G(x+X_{\tilde{t}}^{\bar{\sigma}}+X_T^{\sigma}-X_{\tilde{t}}^{{\sigma}})
\right]
\bigg\}\\
&\geq
-\esssup_{\sigma\in\cU}\bigg\{E_{\sF_t}\int_{\tilde{t}}^TC\Big|\int_t^{ \tilde{t}}(\sigma_r-\bar{\sigma}_r)\,dW_r\Big|^{\alpha}\,ds
+CE_{\sF_t}\Big|\int_t^{ \tilde{t}}(\sigma_r-\bar{\sigma}_r)\,dW_r\Big|^{\alpha}\bigg\}\\
&\geq
-C(\tilde{t}-t)^{\alpha/2}\longrightarrow 0,\quad \text{as }|t-\tilde{t}|\rightarrow 0.
\end{align*}
}
In a similar way, one proves the continuity of $EV(t,x+X^{\bar{\sigma}}_t)$ in $t$, which by the regularity of supermartingale implies the right continuity of $V(t,x+X^{\bar{\sigma}}_t)$. On the other hand, by the BSDE theory, $E_{\sF_t}V(\tilde{t},x+X^{\bar{\sigma}}_{\tilde{t}})$ and $E_{\sF_t}\int_t^{\tilde{t}}g(s,x+X_s^{\bar{\sigma}})\,ds$ are continuous in $t\in[0,\tilde{t}]$, and this together with the above calculations implies the left continuity of $V(t,x+X^{\bar{\sigma}}_t)$ in $t$. Hence, $\left\{V(s,x+X_s^{\bar{\sigma}})\right\}_{s\in[0,T]}$ is a continuous process and we prove assertion (iii).

 As for (iv), relation \eqref{eq-iv-lemm-value-funct}  follows obviously from Proposition \ref{prop-BSPDE-Flow}, Corollary \ref{cor-degenerate-BSPDE} and the fact that $V(t,x+X^{\sigma}_t)\leq J(t,x+X^{\sigma}_t;\sigma)$ for any $\sigma\in\cU$. For any $x,y\in\bR^d$, by definition of the value function, we have
\begin{align*}
|V(t,x)-V(t,y)|&\leq \esssup_{\sigma\in\cU}E_{\sF_t}\bigg[\int_t^T\!\!|f(s,x+X_s^{\sigma}-X_t^{\sigma},\sigma_s)-f(s,y+X_s^{\sigma}-X_t^{\sigma},\sigma_s)|\,ds
\\
&\quad\quad\quad+|G(x+X_T^{\sigma}-X_t^{\sigma})-G(y+X_T^{\sigma}-X_t^{\sigma})|\bigg]
\\
&\leq
L(T-t)|x-y|^{\alpha}+L|x-y|^{\alpha}
\end{align*}
from which one derives the $\alpha$-H\"older continuity of $V(t,x)$ and $J(t,x;\sigma)$ in $x$.
We complete the proof.
\end{proof}


\subsection{Existence and uniqueness of the weak solution for stochastic HJB equation \eqref{SHJB}}


\begin{defn}\label{def-weak-sltn-HJB}
A couple $(u,\psi)\in \cS^2(L^2)\times \cL^2((H^{-1}_2)^m)$ is said to be a weak solution of BSPDE \eqref{SHJB}, if  for each $\sigma\in \cU$, $u$ is $\sigma$-quasi-continuous and there exists a random Radon measure $\mu^{\sigma}$, such that for any $\varphi\in \cD_T$,
\begin{align}
    &\langle u(t),\,\varphi(t)\rangle
    +\!\!\int_t^T\!\! \left( \langle \sigma\psi,\,D \varphi \rangle -\frac{1}{2}\langle u,\,\text{tr}\left(\sigma\sigma'D^2\varphi\right)\rangle +\langle u,\,\partial_s \varphi \rangle\right)(s)\,ds +\!\int_t^T\!\! \langle\varphi(s), \psi(s)\,dW_s  \rangle\nonumber\\
    &=\langle G,\,\varphi(T)\rangle+\int_t^T\langle f(s,\cdot,\sigma_s),\,\varphi(s) \rangle\,ds - \int_t^T\int_{\bR^d}\varphi(s,x)\mu^{\sigma}(ds,dx),\quad t\in[0,T].\label{eq-defn-weaksltn}
\end{align}
and the infimum of $\{\mu^{\sigma}\}_{\sigma\in\cU}$ vanishes in the sense that for each nonnegative $\tilde\varphi\in C_{c}^{\infty}(\bR^d)$ and any $(t,\eps,\sigma)\in [0,T]\times(0,\infty)\times\cU$, there exist a sequence $\{\sigma^i\}\subset\cU$ and a standard partition $\{\zeta^i\}_{i\in\bN^+}$ of unity in $\bR^d$ such that
\begin{align}
E\left[\sum_{i\in\bN^+}\int_{[t,T]\times\bR^d} (\tilde\varphi\zeta^i)(x-X_t^{{\sigma}}+X_s^{\sigma^i}-X_t^{\sigma^i})\,\mu^{\sigma^{[0,t]}\vee \sigma^i}(ds,dx)\right]<\eps,\label{eq-defn-weak-sltn-HJB}
\end{align}
 where
$\sigma^{[0,t]}\vee \sigma^i\in\cU$ with $\left(\sigma^{[0,t]}\vee \sigma^i\right)(s)=\sigma_s 1_{[0,t]}(s)+\sigma^i_s1_{(t,T]}(s)$ for $s\in[0,T]$.

\end{defn}

\begin{rmk}\label{rmk-uniq-diffn}
 In view of Definition \ref{def-weak-sltn-HJB}, we have for any $(\varphi,\sigma)\in\mathcal{D}_T\times \cU$,
\begin{align*}
    &\langle u(0),\,\varphi(0)\rangle
    +\!\!\int_0^t\!\! \left( \langle \sigma\psi,\,D \varphi \rangle -\frac{1}{2}\langle u,\,\text{tr}\left(\sigma\sigma'D^2\varphi\right)\rangle +\langle u,\,\partial_s \varphi \rangle\right)(s)\,ds +\!\int_0^t\!\! \langle\varphi(s), \psi(s)\,dW_s  \rangle\\
    &=\langle u(t),\,\varphi(t)\rangle
    +\int_0^t\langle f(s,\cdot,\sigma_s),\,\varphi(s)\rangle\,ds
    - \int_0^t\int_{\bR^d}\varphi(s,x)\mu^{\sigma}(ds,dx),\quad t\in[0,T].
\end{align*}
Thus $\langle u(t),\,\varphi(t)\rangle$ is a semimartingale. From the Doob-Meyer decomposition theorem and the arbitrariness of $\varphi$, one concludes that $\psi$ is uniquely determined by $u$ in the weak solution for BSPDEs like \eqref{SHJB}. On the other hand, by Proposition \ref{prop-equiv-potentl}, we see that for each $\sigma\in \cU$, $J(t,x;\sigma)-u(t,x)$ is a regular $\sigma$-potential.
%
\end{rmk}

\begin{thm}\label{thm-EU-main}
Under assumption $(\cA 1)$, BSPDE \eqref{SHJB} admits a unique weak solution $(u,\psi)$ with $u$ coinciding with the value function  of \eqref{eq-value-func} for stochastic optimal control problem \eqref{Control-probm}. For this solution,  $\psi+Du\sigma\in\cL^2((L^2)^m)$  for each $\sigma\in\cU$, and  there exists $L_1>0$ such that for any $x,y\in\bR^d$, $\sup_{t\in[0,T]}|u(t,x)-u(t,y)|\leq L_1|x-y|^{\alpha}$ a.s.
\end{thm}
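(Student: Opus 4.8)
The plan is to take $u:=V$, the value function of \eqref{eq-value-func}, and to realize the family $\{\mu^\sigma\}$ as the regular $\sigma$-measures attached to the potentials $w^\sigma:=J(\cdot;\sigma)-V$. The first thing I would record is that $V$ is $\sigma$-quasi-continuous for every $\sigma\in\cU$, and in fact genuinely continuous. By Lemma \ref{lem-value-func}(iii) the map $t\mapsto V(t,x+X^\sigma_t)$ is continuous for each fixed $x$, while Lemma \ref{lem-value-func}(iv) gives a deterministic $\alpha$-H\"older bound in $x$, uniform in $(t,\omega)$. Intersecting the continuity-in-$t$ statement over a countable dense set of $x$ and combining it with the uniform H\"older modulus shows that $(t,x)\mapsto V(\omega,t,x+X^\sigma_t(\omega))$ is jointly continuous for a.e.\ $\omega$; composing with the continuous path $X^\sigma$ makes $V(\omega,\cdot,\cdot)$ itself jointly continuous a.e., so $\sigma$-quasi-continuity is immediate (the exceptional set of the definition may be taken empty).

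Next, for fixed $\sigma$ I would verify the defining properties of a regular $\sigma$-potential for $w^\sigma=J(\cdot;\sigma)-V$. Membership in $\cS^2(L^2)$ and the integrability \eqref{Eq0 Stocha-Potential} come from Lemma \ref{lem-value-func}(iv) and the linear estimate for $J(\cdot;\sigma)=\mathbb{S}(\sigma,f(\cdot,\cdot,\sigma),G)$; $\sigma$-quasi-continuity from the previous step together with Proposition \ref{prop-quasi-cont-BSPDE}; nonnegativity and $w^\sigma(T)=0$ from $V\leq J(\cdot;\sigma)$ and $J(T;\sigma)=G=V(T)$; and the supermartingale inequality \eqref{Eq Stoch-Potential} from the fact that $J(t,x+X^\sigma_t;\sigma)+\int_0^t f\,ds$ is a martingale while $V(t,x+X^\sigma_t)+\int_0^t f\,ds$ is a submartingale by Lemma \ref{lem-value-func}(ii), so that their difference $w^\sigma(t,x+X^\sigma_t)$ is a continuous nonnegative supermartingale. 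Theorem \ref{thm stoch-Potential} then furnishes $\psi^{w,\sigma}$, an increasing process $K^\sigma$, and a regular $\sigma$-measure $\mu^\sigma$ obeying the variational identity (iii). Subtracting that identity from the weak form of the linear BSPDE solved by $J(\cdot;\sigma)$ produces exactly \eqref{eq-defn-weaksltn} with $\psi:=\psi^\sigma-\psi^{w,\sigma}$, and $\psi+DV\sigma\in\cL^2((L^2)^m)$ since both martingale integrands lie there. That this $\psi$ is the same for every $\sigma$ follows from the uniqueness of the martingale part of the semimartingale $\langle V(t),\varphi(t)\rangle$, precisely as in Remark \ref{rmk-uniq-diffn}.

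It remains to check the vanishing-infimum requirement \eqref{eq-defn-weak-sltn-HJB} and to prove uniqueness. For the former, fix $(t,\eps,\sigma)$ and use Remark \ref{rmk-infn} to select a minimizing sequence with $J(t,x+X^\sigma_t;\sigma^i)\downarrow V(t,x+X^\sigma_t)$. Taking conditional expectations in Theorem \ref{thm stoch-Potential}(i) for the potential $w^{\sigma^{[0,t]}\vee\sigma^i}$ identifies the conditional $[t,T]$-mass of $K^{\sigma^{[0,t]}\vee\sigma^i}$ with $\bigl(J(\cdot;\sigma^i)-V\bigr)(t,x+X^\sigma_t)$; relation (iv) then rewrites the $i$-th integral in \eqref{eq-defn-weak-sltn-HJB} as $\tilde\varphi\zeta^i$ integrated along the shifted paths against this vanishing mass, the partition of unity keeps the sum over $i$ convergent, and a sufficiently optimal choice of the $\sigma^i$ drives the total below $\eps$. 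Hence $u=V$ is a weak solution. For uniqueness, let $(u,\psi)$ be any weak solution. By Remark \ref{rmk-uniq-diffn} (via Proposition \ref{prop-equiv-potentl}) each $J(\cdot;\sigma)-u$ is a nonnegative regular $\sigma$-potential with measure $\mu^\sigma$, so $u\leq\essinf_{\sigma}J(\cdot;\sigma)=V$. Conversely, running the same conditional-mass computation on the potentials $J(\cdot;\sigma^i)-u$, the smallness of \eqref{eq-defn-weak-sltn-HJB} forces the $\tilde\varphi$-weighted mass, which dominates $\int\tilde\varphi\,(V-u)(t,\cdot+X^\sigma_t)$, to be arbitrarily small; as $V-u\geq0$ this yields $u\geq V$, whence $u=V$ and $\psi$ is the uniquely determined integrand.

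Finally, the H\"older estimate is inherited from Lemma \ref{lem-value-func}(iv): the bound $|V(t,x)-V(t,y)|\leq L_1|x-y|^\alpha$ holds a.s.\ simultaneously for all rational $t$, and by the joint continuity established above it extends to all $t$, giving $\sup_{t\in[0,T]}|u(t,x)-u(t,y)|\leq L_1|x-y|^\alpha$ a.s. The step I expect to be the main obstacle is the verification and exploitation of the vanishing-infimum condition \eqref{eq-defn-weak-sltn-HJB}: one must match the three-fold path shifts with relation (iv) so that the localized masses of the penalized measures are controlled by the near-optimality gap $J(\cdot;\sigma^i)-V$, while keeping the sum over the partition of unity summable and uniformly integrable. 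This is where the stochastic control problem is genuinely coupled to the family of regular $\sigma$-measures and where the non-Markovian bookkeeping is most delicate; by comparison, the $\sigma$-independence of $\psi$ is a secondary point settled by Doob--Meyer uniqueness.
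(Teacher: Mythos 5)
Your proposal is correct and follows essentially the same route as the paper: take $u=V$, show $J(\cdot\,;\sigma)-V$ is a regular $\sigma$-potential via the supermartingale property of Lemma \ref{lem-value-func}(ii), extract $\mu^{\sigma}$ and $\psi$ from Theorem \ref{thm stoch-Potential} and Proposition \ref{prop-equiv-potentl}, verify \eqref{eq-defn-weak-sltn-HJB} by localizing with an $\eps$-fine partition of unity and near-optimal controls $\sigma^i$, and prove uniqueness by the two-sided comparison with $V$. The only notable differences are cosmetic: the paper reaches the potential property through the Snell-envelope results (Proposition \ref{prop-snell-potential} and Corollary \ref{cor-optim-stop}) rather than by checking the definition directly, and it quantifies the localized mass via the bound $E|K^{\sigma}_T(x)|^2\leq C|J(0,x;\sigma)-V(0,x)|^2$ of Remark \ref{rmk-potentl-est-K} combined with the uniform $\alpha$-H\"older modulus of Lemma \ref{lem-value-func}(iv) --- the explicit estimate your sketch of the vanishing-infimum step should invoke to propagate near-optimality from the centers $x^i$ to the whole supports of the $\zeta^i$.
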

\begin{proof}
\textit{Existence.} Consider the value function $V$ of \eqref{eq-value-func} for stochastic optimal control problem \eqref{Control-probm}. Put $(u^{\sigma},\psi^{\sigma})=\mathbb{S}(\sigma,f,G)$ for $\sigma\in\cU$. One has $u^{\sigma}(t,x)=J(t,x;\sigma)$. In view of assertions (ii), (iii) and (iv) in Lemma \ref{lem-value-func}, from Proposition \ref{prop-snell-potential} and Corollary \ref{cor-optim-stop} we deduce that $u^{\sigma}-V$ is a regular $\sigma$-potential, whose associated random Radon measure is denoted by $\mu^{\sigma}$. Further by Proposition \ref{prop-equiv-potentl}, we conclude the relation \eqref{eq-defn-weaksltn}, where we denote the associated diffusion term of $V$ by $\psi$ which in a similar way to Remark \ref{rmk-uniq-diffn} is uniquely determined by $V$..

  For relation \eqref{eq-defn-weak-sltn-HJB}, we only give the proof for the case $t=0$, since it follows similarly for $t\in(0,T]$.

For each $\eps\in(0,1)$, choose the partition $\{\zeta^i\}_{i\in\bN^+}$ of unity  such that  for each $i\in\bN^+$, $\text{supp\,}\zeta^i\subset B(x^i,\eps)$ for some $x^i\in\bR^d$, where $B(x^i,\eps)$ denotes the open ball of radius $\eps$ centered at $x^i$.
By assertion (i) of Lemma \ref{lem-value-func}, for each $i\in\bN^+$ we take $\sigma^i\in\cU$ such that $|V(0,x^i)-J(0,x^i;\sigma^i)|<\eps$.  By the uniform H\"older continuity of $V(t,x)$ and $J(t,x;\sigma)$ in $x$, one has further $|V(0,x)-J(0,x;\sigma^i)|<(2L_1+1)\eps^{\alpha}$  for any $x\in\bR^d$. On the other hand, in view of Remark \ref{rmk-potentl-est-K} one has
\[
E|K^{\sigma}_T(x)|^2\leq C |V(0,x)-J(0,x;\sigma)|^2,
\]
where $K^{\sigma}$ is the increasing process associated to the regular $\sigma$-measure $\mu^{\sigma}$ (see (iii) of Theorem \ref{thm stoch-Potential}).

For the associated regular $\sigma$-measure $\mu^{\sigma}$, we have for any nonnegative $\varphi\in C_{c}^{\infty}(\bR^d)$,
\begin{align}
E\left[\sum_{i\in\bN^+}\int_Q (\varphi\zeta^i)(x-X_t^{\hat{\sigma}})\mu^{\sigma^i}(dt,dx)\right]
&=
E\left[\sum_{i\in\bN^+}\int_{\bR^d}\int_{0}^T (\varphi\zeta^i)(x)\,dK^{\sigma^i}_t(x)dx\right]
\nonumber\\
&\leq
\sum_{i\in\bN^+}\int_{\bR^d}  \varphi(x) \zeta^i(x) \left(E\left|K^{\sigma^i}_T(x)\right|^2\right)^{1/2}dx\nonumber\\
&\leq
C(2L_1+1)\|\varphi\|_{L^1(\bR^d)}\eps^{\alpha}.
\end{align}
Hence, $(V,\psi)$ is a weak solution to BSPDE \eqref{SHJB}.

\textit{Uniqueness.}
To prove the uniqueness, we need only verify that $u$ coincides with the value function $V$ for any solution $(u,\psi)$ of BSPDE \eqref{SHJB}. In fact, by Definition \ref{def-weak-sltn-HJB} and Proposition \ref{prop-equiv-potentl}, for each $\sigma\in\cU$ one always has $u(t,x)-J(t,x;\sigma)\leq 0$ and $\left\{ J(t,x+X^{\sigma}_t;\sigma)-u(t,x+X_{t}^{\sigma})  \right\}_{t\in[0,T]}$ is a  nonnegative continuous supermartingale. It is obvious that $u(t,x)\leq V(t,x)$, $\bP\otimes dt \otimes dx$-a.e.
On the other hand, in view of \eqref{eq-defn-weak-sltn-HJB}, one has 
\begin{align*}
\langle \varphi,\,\sum_{i\in\bN^+}\zeta^iJ(0,\cdot;\sigma^i)-u(0,\cdot)\rangle
&=
E\left[\sum_{i\in\bN^+}\int_{\bR^d}\int_{0}^T (\varphi\zeta^i)(x)\,dK^{\sigma^i}_t(x)dx\right]
\\
&=E\left[\sum_{i\in\bN^+}\int_Q (\varphi\zeta^i)(x-X_t^{{\sigma}^i})\mu^{\sigma^i}(dt,dx)\right]\\
&< \eps.
\end{align*}
It follows that
\[
\langle \varphi,\,V(0,\cdot)\rangle\geq \langle\varphi,\,u(0,\cdot)\rangle > \langle \varphi,\,V(0,\cdot)\rangle -\eps,
\]
which together with the arbitrariness of $(\eps,
\varphi)$ implies $V(0,\cdot)=u(0,\cdot)$. In a similar way, one has further $V(t,\cdot)=u(t,\cdot)$ for any $t\in[0,T]$. This completes the proof.
\end{proof}

  An immediate consequence of Theorem \ref{thm-EU-main} is the dynamic programming principle for the stochastic optimal control problems of non-Markovian type, which was established by Peng \cite[Theorem 6.6, Page 123]{Peng-DPP-1997} with a different method.
\begin{cor}\label{rmk-DPP}
Under assumption $(\cA1)$, there holds for any $0\leq t\leq t+\delta\leq T$ and $\zeta\in L^2(\Omega,\sF_t)$,
\[
V(t,\zeta)=\essinf_{\sigma\in\cU}E_{\sF_t}\left[\int_t^{t+\delta}f(s,\zeta+X^{\sigma}_s-X^{\sigma}_t,\sigma_s)\,ds
+V(t+\delta,\zeta+X^{\sigma}_{t+\delta}-X^{\sigma}_t)
\right],\,\text{a.s.}
\]
\end{cor}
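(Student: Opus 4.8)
The plan is to prove the two inequalities $V(t,\zeta)\ge(\mathrm{RHS})$ and $V(t,\zeta)\le(\mathrm{RHS})$ separately, where $(\mathrm{RHS})$ denotes the right-hand side of the asserted identity. Throughout I use that $u=V$ is the value function by Theorem \ref{thm-EU-main}, and that $V(t,\cdot)$ and $J(t,\cdot;\sigma)$ are $\bP$-a.s.\ $\alpha$-Hölder in space by Lemma \ref{lem-value-func}(iv), so that $V(t,\zeta)$ and $J(t,\zeta;\sigma)$ are well-defined $\sF_t$-measurable random variables for $\zeta\in L^2(\Omega,\sF_t)$. The backbone of both inequalities is the cocycle identity
\[
J(t,\zeta;\sigma)=E_{\sF_t}\Big[\int_t^{t+\delta}\!\! f(s,\zeta+X^{\sigma}_s-X^{\sigma}_t,\sigma_s)\,ds+J(t+\delta,\zeta+X^{\sigma}_{t+\delta}-X^{\sigma}_t;\sigma)\Big],
\]
which follows from the definition \eqref{eq-cost-funct} of $J$, the tower property $E_{\sF_t}=E_{\sF_t}E_{\sF_{t+\delta}}$, and the shift identity $\zeta+X^{\sigma}_s-X^{\sigma}_t=(\zeta+X^{\sigma}_{t+\delta}-X^{\sigma}_t)+(X^{\sigma}_s-X^{\sigma}_{t+\delta})$ valid for $s\ge t+\delta$; for random $\zeta$ it is obtained from the case of deterministic $x$ by a standard freezing argument based on the spatial Hölder continuity.

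For $V(t,\zeta)\ge(\mathrm{RHS})$ I would fix $\sigma\in\cU$ and substitute $V(t+\delta,\cdot)\le J(t+\delta,\cdot;\sigma)$ into the cocycle identity, obtaining
\[
J(t,\zeta;\sigma)\ge E_{\sF_t}\Big[\int_t^{t+\delta}\!\! f(s,\zeta+X^{\sigma}_s-X^{\sigma}_t,\sigma_s)\,ds+V(t+\delta,\zeta+X^{\sigma}_{t+\delta}-X^{\sigma}_t)\Big]\ge(\mathrm{RHS}),
\]
the last inequality because the middle term is one competitor in the essential infimum defining $(\mathrm{RHS})$. Since $(\mathrm{RHS})$ is a fixed random variable lying a.s.\ below every $J(t,\zeta;\sigma)$, it lies below their essential infimum, i.e.\ $V(t,\zeta)=\essinf_{\sigma\in\cU}J(t,\zeta;\sigma)\ge(\mathrm{RHS})$.

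For the reverse inequality I would paste controls. Fix $\sigma\in\cU$ and $\eps>0$, and put $\eta:=\zeta+X^{\sigma}_{t+\delta}-X^{\sigma}_t\in L^2(\Omega,\sF_{t+\delta})$. Choosing a countable Borel partition $\{A^i\}_{i\in\bN^+}$ of $\bR^d$ with $\mathrm{diam}\,A^i<\eps$ and points $x^i\in A^i$, Lemma \ref{lem-value-func}(i) together with the downward-directed structure of Remark \ref{rmk-infn} and \cite[Theorem A.3]{Karatz-Shreve-1998MMF} supplies controls $\sigma^i\in\cU$ each nearly optimal for the (deterministic) starting point $x^i$ at time $t+\delta$. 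Defining $\gamma\in\cU$ by $\gamma=\sigma$ on $[t,t+\delta]$ and $\gamma_s=\sum_{i}1_{\{\eta\in A^i\}}\sigma^i_s$ on $(t+\delta,T]$ (admissible since the weights are $\sF_{t+\delta}$-measurable), the second leg of $\gamma$ equals $\sigma^i$ on $\{\eta\in A^i\}$; the uniform $\alpha$-Hölder estimate of Lemma \ref{lem-value-func}(iv) then lets one pass between $\eta$ and $x^i$ and gives $E_{\sF_t}\big[J(t+\delta,\eta;\gamma)-V(t+\delta,\eta)\big]\le C\eps^{\alpha}$. Feeding this into the cocycle identity for $\gamma$ yields
\[
V(t,\zeta)\le J(t,\zeta;\gamma)\le E_{\sF_t}\Big[\int_t^{t+\delta}\!\! f(s,\zeta+X^{\sigma}_s-X^{\sigma}_t,\sigma_s)\,ds+V(t+\delta,\eta)\Big]+C\eps^{\alpha},
\]
and taking $\essinf_{\sigma\in\cU}$ and then letting $\eps\downarrow0$ gives $V(t,\zeta)\le(\mathrm{RHS})$.

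The delicate point is precisely this pasting step: the terminal state $\eta$ of the first leg is a genuine $\sF_{t+\delta}$-measurable random vector, so one cannot simply invoke a deterministic near-optimal control, and the supermartingale inequality of Lemma \ref{lem-value-func}(ii) only matches the control to the flow $x+X^{\bar\sigma}_t$ and hence does not by itself reach the full essential infimum. The remedy is the measurable-selection/partition-of-unity device already employed for \eqref{eq-defn-weak-sltn-HJB} in the proof of Theorem \ref{thm-EU-main}, in which the downward-directed property of Remark \ref{rmk-infn} produces near-optimal controls in the a.s.\ sense and the uniform spatial Hölder continuity of Lemma \ref{lem-value-func}(iv) controls the error from discretizing $\eta$ onto the grid $\{x^i\}$. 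Once the pasting is in place, the two inequalities combine to give the claimed dynamic programming principle, a.s., for every $0\le t\le t+\delta\le T$ and $\zeta\in L^2(\Omega,\sF_t)$.
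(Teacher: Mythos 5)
Your two-inequality strategy is sound, and the easy half ($V(t,\zeta)\geq \mathrm{RHS}$ via the cocycle identity for $J$ and $V\leq J$) is exactly right. Note, however, that the paper offers no written proof of Corollary~\ref{rmk-DPP}: it is declared an immediate consequence of Theorem~\ref{thm-EU-main}, and the hard inequality is in substance already contained in Lemma~\ref{lem-value-func}(ii), whose proof runs through Remark~\ref{rmk-infn} rather than through any spatial discretization. There, the family $\{J(t+\delta,\,x+X^{\bar\sigma}_{t+\delta};\sigma)\}_{\sigma\in\cU}$ is shown to be downward directed \emph{at the random terminal state itself} — the better of two controls is selected on an $\sF_{t+\delta}$-measurable event and the pasted control remains admissible — so \cite[Theorem A.3]{Karatz-Shreve-1998MMF} produces a single sequence $\sigma^n$ with $J(t+\delta,\cdot\,;\sigma^n)\downarrow V(t+\delta,\cdot)$ a.s.\ along the flow, and conditional monotone convergence finishes the argument with no partition of $\bR^d$ and no use of the H\"older modulus. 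Your route replaces this with an $\eps$-grid $\{x^i\}$, near-optimal controls at the deterministic nodes, and the uniform $\alpha$-H\"older continuity of Lemma~\ref{lem-value-func}(iv) to transfer between $\eta$ and $x^i$; this is a legitimate alternative (and is the same device the paper uses to verify \eqref{eq-defn-weak-sltn-HJB}), at the cost of an extra $C\eps^{\alpha}$ error and a heavier reliance on the spatial regularity.

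There is one step in your pasting argument that does not hold as written: the bound $E_{\sF_t}\bigl[J(t+\delta,\eta;\gamma)-V(t+\delta,\eta)\bigr]\leq C\eps^{\alpha}$. Lemma~\ref{lem-value-func}(i) (equivalently, Remark~\ref{rmk-infn}) only furnishes controls $\sigma^i$ with $E\bigl[J(t+\delta,x^i;\sigma^i)-V(t+\delta,x^i)\bigr]<\eps_i$ — a control on the \emph{unconditional} expectation — and a small unconditional expectation of a nonnegative variable gives no pointwise control on its conditional expectation given $\sF_t$. Consequently you only obtain
\[
E\bigl[V(t,\zeta)\bigr]\;\leq\; E\Bigl[\int_t^{t+\delta}\!\!f(s,\zeta+X^{\sigma}_s-X^{\sigma}_t,\sigma_s)\,ds+V(t+\delta,\eta)\Bigr]+C\eps^{\alpha}+\textstyle\sum_i\eps_i ,
\]
not the conditional version. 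The proof still closes, but you must say how: choose $\eps_i=\eps 2^{-i}$, take the infimum over $\sigma\in\cU$ of the right-hand side, observe that the family of competitors $\bigl\{E_{\sF_t}[\,\cdots]\bigr\}_{\sigma\in\cU}$ is downward directed by the same $\sF_t$-measurable pasting as in Remark~\ref{rmk-infn} (so $\inf_{\sigma}E[\,\cdots]=E[\essinf_{\sigma}(\cdots)]=E[\mathrm{RHS}]$), and combine $E[V(t,\zeta)]\leq E[\mathrm{RHS}]$ with the a.s.\ inequality $V(t,\zeta)\geq\mathrm{RHS}$ from your first half to conclude a.s.\ equality. Alternatively, avoid the issue entirely by adopting the paper's directedness argument at the random point $\eta$. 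With either repair the proof is complete; the remaining measure-theoretic points you flag (substituting a random $\zeta$ into $V(t,\cdot)$ and into the essential infimum) are indeed handled by the uniform H\"older continuity and a freezing argument, as you indicate.
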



\subsection{On the regularity}

We shall study the partially non-Markovian case and derive certain regular properties. Rewrite the Wiener process $W=(\tilde{W},\bar{W})$ with $\tilde{W}$ and $\bar{W}$ being two mutually independent and respectively, $m_0$ and $m_1$ dimensional  Wiener processes.


Instead of BSPDE \eqref{SHJB}, we consider
{\small
\begin{equation}\label{SHJB-SP}
  \left\{\begin{array}{l}
  \begin{split}
  -du(t,x)=\,&\displaystyle  
  \essinf_{\sigma=(\tilde{\sigma},\bar{\sigma})\in U} \bigg\{\text{tr}\left(\frac{1}{2}(\bar{\sigma} \bar{\sigma}' +\tilde{\sigma}\tilde{\sigma}')D^2 u+\tilde{\sigma} D\psi\right)(t,x)
        +f(t,x,\sigma)
                \bigg\} \,dt\\ &\displaystyle
           -\psi(t,x)\, d\tilde{W}_{t}, \quad
                     (t,x)\in Q;\\
    u(T,x)=\, &G(x), \quad x\in\bR^d.
    \end{split}
  \end{array}\right.
\end{equation}}
Here and in the following, we adopt the decomposition $\sigma=(\tilde{\sigma},\bar{\sigma})$ with $\tilde{\sigma}$ and $\bar{\sigma}$ valued in $\bR^{n\times m_0}$ and $\bR^{n\times m_1}$ respectively for the control $\sigma$, and associated with $(\tilde{W}, \bar{W})$, we take $\Omega=C([0,T];\bR^{m})$, $\tilde{\Omega}=C([0,T];\bR^{m_0})$, $\bar{\Omega}=C([0,T];\bR^{m_1})$, and $\omega=(\tilde{\omega},\bar{\omega})$ with $\omega\in\Omega$, $\tilde{\omega}\in\tilde{\Omega}$ and $\bar{\omega}\in\bar{\Omega}$.

Denote by $\{\tilde{\sF}_t\}_{t\geq0}$ the natural filtration generated by $\tilde{W}$ and augmented by all the
$\bP$-null sets.

\bigskip\medskip
   $({\mathcal A} 2)$ \it $G\in L^{2}(\Omega,\tilde{\sF}_T;L^2)$ and for each $(t,\tilde{v})\in [0,T]\times U$, the   random function
$
  f(\cdot,t,\cdot,\tilde{v}):~\Omega\times\bR^d\rightarrow \bR
$
is $\tilde{\sF}_t\otimes\cB(\bR^d)$-measurable.\rm

First, we shall show a measurability property of the value function $V(t,x)$.
\begin{lem}\label{lem-meas-value-funct}
Under assumptions $(\cA1)$ and $(\cA2)$, the value function $V(t,x)$ defined by \eqref{eq-value-func} is $\tilde{\sF}_t$-measurable for each $(t,x)\in[0,T]\times \bR^d$.
\end{lem}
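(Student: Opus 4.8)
The plan is to show that $V(t,x)$ agrees almost surely with the value obtained by optimizing only over controls that ignore the $\bar W$--history on $[0,t]$, and that this restricted value is manifestly $\tilde{\sF}_t$--measurable. Work on the canonical space $\Omega=\tilde\Omega\times\bar\Omega$ with $\bP=\tilde\bP\otimes\bar\bP$, and for $s\geq t$ introduce the future increment process $\Delta:=(W_{t+\cdot}-W_t)$, which is a Wiener process independent of $\sF_t$. Since $\sF_t$ is generated (up to null sets) by the pair of paths $(\tilde\omega|_{[0,t]},\bar\omega|_{[0,t]})$, any $\sigma\in\cU$ admits on $[t,T]$ a progressively measurable functional representation $\sigma_s=\Sigma_s(\tilde\omega|_{[0,t]},\bar\omega|_{[0,t]},\Delta)$. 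I call $\sigma$ a \emph{$\tilde{}$--control} when its representation does not depend on the coordinate $\bar\omega|_{[0,t]}$, and write $\cU^{\tilde{}}\subset\cU$ for the set of such controls (their values on $[0,t)$ are irrelevant to $J$ and may be fixed arbitrarily).

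The first step is a freezing representation for the cost. By $(\cA 2)$ the coefficients $f(s,\cdot,\tilde v)$ for $s\geq t$ and the terminal datum $G$ depend on the past only through $\tilde\omega|_{[0,t]}$ and on the future only through $\Delta$, while the controlled state $x+X^{\sigma}_s-X^{\sigma}_t=x+\int_t^s\Sigma_r\,dW_r$ depends only on $(\Sigma,\Delta)$. Hence $\Phi^{\sigma}:=\int_t^T f(s,x+X^{\sigma}_s-X^{\sigma}_t,\sigma_s)\,ds+G(x+X^{\sigma}_T-X^{\sigma}_t)$ may be written as $\Phi^{\sigma}=\Psi\big(\tilde\omega|_{[0,t]},\Delta;\Sigma(\tilde\omega|_{[0,t]},\bar\omega|_{[0,t]},\Delta)\big)$ for a measurable functional $\Psi$. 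Freezing the $\sF_t$--measurable data $(\tilde\omega|_{[0,t]},\bar\omega|_{[0,t]})$ and integrating over $\Delta\perp\sF_t$ (the independence lemma) yields the version
\[
J(t,x;\sigma)=H\big(\tilde\omega|_{[0,t]},\bar\omega|_{[0,t]}\big),\qquad H(a,b):=E_{\Delta}\big[\Psi(a,\Delta;\Sigma(a,b,\Delta))\big].
\]
For $\sigma\in\cU^{\tilde{}}$ the functional $\Sigma$ is independent of $b$, so $J(t,x;\sigma)=H(\tilde\omega|_{[0,t]})$ is $\tilde{\sF}_t$--measurable.

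Next set $\underline V(t,x):=\essinf_{\sigma\in\cU^{\tilde{}}}J(t,x;\sigma)$. The family $\{J(t,x;\sigma):\sigma\in\cU^{\tilde{}}\}$ is downward directed: for $\sigma^1,\sigma^2\in\cU^{\tilde{}}$ the event $A=\{J(t,x;\sigma^1)\leq J(t,x;\sigma^2)\}$ lies in $\tilde{\sF}_t$, so the pasted control $\sigma^1 1_A+\sigma^2 1_{A^c}$ on $[t,T]$ is again a $\tilde{}$--control and realizes $J(t,x;\sigma^1)\wedge J(t,x;\sigma^2)$, exactly as in Remark \ref{rmk-infn}. Therefore $\underline V(t,x)$ is the almost sure limit of a decreasing sequence of $\tilde{\sF}_t$--measurable random variables, hence is itself $\tilde{\sF}_t$--measurable; and since $\cU^{\tilde{}}\subset\cU$ we have $V(t,x)\leq\underline V(t,x)$ a.s.

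It then remains to prove $\underline V(t,x)\leq J(t,x;\sigma)$ a.s.\ for every $\sigma\in\cU$, for taking the essential infimum over $\sigma$ gives the reverse inequality and thus $V(t,x)=\underline V(t,x)$. Fix $\sigma\in\cU$ with representation $\Sigma$ and write $\underline V(t,x)=\underline V^{\ast}(\tilde\omega|_{[0,t]})$. For each fixed path $b\in\bar\Omega$ the frozen control with representation $\Sigma(\cdot,b,\cdot)$ lies in $\cU^{\tilde{}}$, whence $\underline V^{\ast}(a)\leq H(a,b)$ for $\tilde\bP$--a.e.\ $a$; by Tonelli the exceptional set $N=\{(a,b):\underline V^{\ast}(a)>H(a,b)\}$ is $(\tilde\bP\otimes\bar\bP)$--null. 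Because $\tilde W$ and $\bar W$ are independent, $(\tilde\omega|_{[0,t]},\bar\omega|_{[0,t]})$ has law $\tilde\bP\otimes\bar\bP$, so pulling $N$ back along this map shows $\underline V^{\ast}(\tilde\omega|_{[0,t]})\leq H(\tilde\omega|_{[0,t]},\bar\omega|_{[0,t]})=J(t,x;\sigma)$ for $\bP$--a.e.\ $\omega$. I expect the main obstacle to be the rigorous freezing representation of the second paragraph, namely making the independence lemma precise for the controlled functional $\Phi^{\sigma}$ on the canonical path space (the progressively measurable representation $\Sigma$ and the attendant null--set bookkeeping); once that is in place the directedness/essential-infimum argument and Tonelli's theorem finish the proof.
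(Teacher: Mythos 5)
Your proof is correct in outline, but it takes a genuinely different route from the paper. The paper argues by invariance under Cameron--Martin translations of the $\bar\omega$-component: it shows $J(t,x;\sigma)(\tau_h)=J(t,x;\sigma(\tau_h))$ via Girsanov, uses $\{\sigma(\tau_h):\sigma\in\cU\}=\cU$ to conclude $V(t,x)(\tau_h)=V(t,x)$ a.s.\ for every $h$ in the Cameron--Martin space of $\bar W$, and then deduces the measurability from the density of the stochastic exponentials of $\bar W$. That argument is short and entirely avoids the functional-representation machinery, at the price of invoking quasi-invariance of Wiener measure and a density/zero--one-law step at the end which the paper states rather tersely. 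Your argument instead restricts to the subclass $\cU^{\tilde{}}$, proves $\tilde\sF_t$-measurability of the restricted value directly via the freezing (independence) lemma, and recovers the full value by a Tonelli/section argument; a nice byproduct is the identity $V=\underline V$, i.e.\ that the pre-$t$ history of $\bar W$ may be discarded from the controls without changing the value, which is conceptually close to what the paper later needs when passing to $\tilde\cU$. Your overhead is exactly where you locate it: the progressively measurable functional representation $\sigma_s=\Sigma_s(\tilde\omega|_{[0,t]},\bar\omega|_{[0,t]},\Delta)$ holds only up to modification on null sets (the augmented Brownian filtration), so in the final step the frozen control $\Sigma(\cdot,b,\cdot)$ is a legitimate element of $\cU^{\tilde{}}$ (progressively measurable and $U$-valued) only for $\bar\bP$-a.e.\ $b$ rather than for every $b$ --- which is still enough for the Tonelli argument, but should be stated that way. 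With that bookkeeping done, both proofs are sound and rest on the same structural inputs ($\bP=\tilde\bP\otimes\bar\bP$ and the stability of $\cU$ under the relevant operations).
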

\begin{proof}
Set
\[
\bH:=\left\{h;h(0)=0,\frac{dh}{dt}\in L^2(0,T;\bR^{m_1})\right\},
\]
which is the Cameron-Martin space associated with the Wiener process $\bar{W}$. For any $h\in\bH$, we define the translation operator $\tau_h:{\Omega}\rightarrow {\Omega}$, $\tau_h((\tilde{\omega},\bar{\omega}))=(\tilde{\omega},\bar{\omega}+h)$ for ${\omega}=(\tilde{\omega},\bar{\omega})\in{\Omega}$. It is obvious that $\tau_h$ is a bijection and that it defines the probability transformation: $\left(\bP\circ \tau_h^{-1}\right)(d\omega)=\exp\{\int_0^T|\frac{dh}{dt}|^2\,dt-\frac{1}{2}\int_0^T\frac{dh}{dt}\,d\bar{W}_t\}\bP(d\omega)$. Fix some $(t,x)\in[0,T]\times\bR^d$. By Girsanov theorem, it follows that for any $\sigma\in\cU$,  $X^{\sigma}(\tau_h)=X^{\sigma(\tau_h)}$ and $J(t,x;\sigma)(\tau_h)=J(t,x;\sigma(\tau_h))$, $\bP$-a.s. Taking into account the fact that $\{\sigma(\tau_h)|\sigma\in\cU\}=\cU$, one gets further that
\[
\left(
\essinf_{\sigma\in\cU}J(t,x;\sigma)
\right)(\tau_h)
=
\essinf_{\sigma\in\cU}J(t,x;\sigma(\tau_h))
= \essinf_{\sigma(\tau_{-h})\in\cU}J(t,x;\sigma)
= \essinf_{\sigma\in\cU}J(t,x;\sigma).
\]
Hence, $V(t,x)(\tau_h)=V(t,x)$ $\bP$-a.s. for any $h\in\bH$. In particular, for any continuous and bounded function $\Phi$,
\begin{align*}
&E\left[  \Phi(V(t,x))\exp\Big\{\int_0^T|\frac{dh}{ds}|^2\,ds-\frac{1}{2}\int_0^T\frac{dh}{ds}\,d\bar{W}_s\Big\}  \right]\\
&=
E\left[  \Phi(V(t,x))(\tau_{h})\exp\Big\{\int_0^T|\frac{dh}{ds}|^2\,ds-\frac{1}{2}\int_0^T\frac{dh}{ds}\,d\bar{W}_s\Big\}  \right]
\\
&=E\left[  \Phi(V(t,x))\right]
E\left[\exp\Big\{\int_0^T|\frac{dh}{ds}|^2\,ds-\frac{1}{2}\int_0^T\frac{dh}{ds}\,d\bar{W}_s\Big\}  \right],
\end{align*}
which together with the arbitrariness of $(\Phi,h)$ implies that $V(t,x)$ is just $\tilde{\sF}_t$-measurable.
\end{proof}
 Denote by $\tilde{\cU}$ the set of all the $\tilde{\sF}_t$-adapted elements of $\cU$. In a similar way to Lemma \ref{lem-meas-value-funct}, one gets the following
\begin{cor}
Under assumptions $(\cA1)$ and $(\cA2)$,  the cost functional $J(t,x;\sigma)$ defined by \eqref{eq-cost-funct} is $\tilde{\sF}_t$-measurable for each $(t,x,\sigma)\in[0,T]\times \bR^d\times\tilde{\cU}$.
\end{cor}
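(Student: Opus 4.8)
The plan is to reproduce the Cameron--Martin/Girsanov argument of Lemma~\ref{lem-meas-value-funct}, but applied to a single fixed control $\sigma\in\tilde{\cU}$ rather than to the essential infimum over all of $\cU$. The role played in the lemma by the invariance of the whole set $\cU$ under the map $\sigma\mapsto\sigma(\tau_h)$ will here be played by the stronger, pointwise fact that a $\tilde{\sF}$-adapted control is itself left fixed by the translation $\tau_h$, so that the essinf manipulations of the lemma collapse to a trivial identity.

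First I would fix $(t,x,\sigma)\in[0,T]\times\bR^d\times\tilde{\cU}$ and record two observations. On one hand, $J(t,x;\sigma)$ is by its very definition \eqref{eq-cost-funct} a conditional expectation given $\sF_t$, hence $\sF_t$-measurable. On the other hand, since $\sigma\in\tilde{\cU}$ is $\tilde{\sF}$-adapted, each $\sigma_s$ depends only on the $\tilde{\omega}$-coordinate; as $\tau_h(\tilde{\omega},\bar{\omega})=(\tilde{\omega},\bar{\omega}+h)$ leaves $\tilde{\omega}$ untouched, we get $\sigma(\tau_h)=\sigma$ for every $h\in\bH$. Next I would invoke, verbatim from the proof of Lemma~\ref{lem-meas-value-funct}, the Girsanov identity $J(t,x;\sigma)(\tau_h)=J(t,x;\sigma(\tau_h))$, valid $\bP$-a.s.\ for any $\sigma\in\cU$ (here assumption $(\cA2)$ enters, guaranteeing that the coefficients $f$ and $G$ are unaffected by $\tau_h$, while the driving term transforms through $X^{\sigma}(\tau_h)=X^{\sigma(\tau_h)}$). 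Combined with $\sigma(\tau_h)=\sigma$ this yields $J(t,x;\sigma)(\tau_h)=J(t,x;\sigma)$, $\bP$-a.s., for all $h\in\bH$.

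From here the concluding step is identical to that of the lemma: testing against an arbitrary continuous bounded $\Phi$ and using the probability transformation induced by $\tau_h$ shows that $E[\Phi(J(t,x;\sigma))M_h]=E[\Phi(J(t,x;\sigma))]\,E[M_h]$, where $M_h$ is the associated Girsanov density. By the arbitrariness of $(\Phi,h)$ the random variable $J(t,x;\sigma)$ is independent of the $\bar{W}$-path, and hence measurable with respect to the $\sigma$-algebra generated by $\tilde{W}$.

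The only point requiring care, and thus the main (though routine) obstacle, is upgrading this $\tilde{W}$-measurability to genuine $\tilde{\sF}_t$-measurability. For this I would combine it with the $\sF_t$-measurability noted at the outset: a random variable that is simultaneously $\sF_t$-measurable and measurable with respect to $\tilde{W}$ on all of $[0,T]$ must be $\tilde{\sF}_t$-measurable, since the increments of $\tilde{W}$ after time $t$ are independent of $\sF_t$. This is handled exactly as in Lemma~\ref{lem-meas-value-funct}, and the remaining manipulations are a direct transcription of that proof.
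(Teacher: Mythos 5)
Your proposal is correct and follows exactly the route the paper intends: the paper gives no separate proof for this corollary, stating only that it is obtained ``in a similar way to Lemma \ref{lem-meas-value-funct},'' and your argument is precisely that Cameron--Martin/Girsanov translation argument specialized to a fixed $\sigma\in\tilde{\cU}$, where $\sigma(\tau_h)=\sigma$ makes the essential-infimum manipulations unnecessary. Your explicit treatment of the final upgrade from $\sigma(\tilde W)$-measurability to $\tilde{\sF}_t$-measurability (via the $\sF_t$-measurability of the conditional expectation and the independence of the post-$t$ increments of $\tilde W$ from $\sF_t$) is a detail the paper leaves implicit, and it is handled correctly.
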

Under assumptions $(\cA1)$ and $(\cA2)$, let $(u,\psi)$ be the unique solution of BSPDE \eqref{SHJB} in Theorem  \ref{thm-EU-main}. Then by Lemma \ref{lem-meas-value-funct}, for each $(t,x)\in[0,T]\times\bR^d$, $u(t,x)=V(t,x)$ is just $\tilde{\sF}_t$-measurable, and  making similar arguments as in Remark \ref{rmk-uniq-diffn}, especially, by investigating the case $\sigma_s=\sigma_0\in U$ for any $s\in[0,T]$,  we deduce that  $\psi_i=0$ for $i=m_0+1,\dots,m_0+m_1$. Then BSPDE \eqref{SHJB} can be written into the form of BSPDE \eqref{SHJB-SP}. Similarly, for each $\sigma\in\tilde{\cU}$, letting $(u^{\sigma},\psi^{\sigma})=\mathbb{S}(\sigma,f,G)$, then $\psi^{\sigma}_i=0$ for $i=m_0+1,\dots,m_0+m_1$. In view of Definition \ref{def-weak-sltn-HJB} and Theorem \ref{thm stoch-Potential}, one concludes that for any $\sigma=(\tilde{\sigma},\bar{\sigma})\in\tilde{\cU}$, there holds the gradient estimate $Du\bar{\sigma}\in\cL^2((L^2)^{m_1})$. Hence, we have

\begin{prop}\label{prop-parabolc-BSPDE}
Under assumptions $(\cA1)$ and $(\cA2)$, for the unique solution $(u,\psi)$ of BSPDE \eqref{SHJB}, $u(t,x)$ is just $\tilde{\sF}_t$-measurable for each $(t,x)\in[0,T]\times \bR^d$ and $\psi_i=0$ for $i=m_0+1,\dots,m_0+m_1$, and for any $\sigma=(\tilde{\sigma},\bar{\sigma})\in\tilde{\cU}$, there holds the gradient estimate $Du\bar{\sigma}\in\cL^2((L^2)^{m_1})$. In particular, BSPDE \eqref{SHJB} can be written into the form of BSPDE \eqref{SHJB-SP} and if there exists $\sigma_0=(\tilde\sigma_0,\bar\sigma_0)\in U$ such that $\bar\sigma_0\bar\sigma_0'>0$, then one has further $Du\in \cL^2((L^2)^d)$ and $\psi\in\cL^2((L^2)^{m_0})$.
\end{prop}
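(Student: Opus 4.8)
The plan is to treat the listed conclusions in the order in which they are prepared by the discussion preceding the statement, reserving the genuine new work for the nondegenerate ``in particular'' claim. The $\tilde{\sF}_t$-measurability of $u(t,x)=V(t,x)$ is exactly Lemma \ref{lem-meas-value-funct}. For the vanishing of $\psi_i$ with $i>m_0$, I would run the semimartingale argument of Remark \ref{rmk-uniq-diffn} with the \emph{constant} control $\sigma_s\equiv\sigma_0\in U$: testing \eqref{eq-defn-weaksltn} against $\varphi\in\cD_T$ exhibits $\langle u(t),\varphi(t)\rangle$ as a continuous $\sF$-semimartingale whose only martingale part is $\int_0^{\cdot}\langle\varphi(s),\psi(s)\,dW_s\rangle$ (the $ds$-integrals and the $\mu^{\sigma}$-integral being of finite variation). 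Since $u$, hence $\langle u(t),\varphi(t)\rangle$, is $\tilde{\sF}_t$-adapted and $\tilde{\sF}$ is the Brownian filtration of $\tilde W$, the martingale part must reduce to a $\tilde W$-integral, which forces $\langle\varphi,\psi_i\rangle=0$ for $i>m_0$ and all $\varphi$, i.e.\ $\psi_i=0$. Substituting $\psi_{m_0+1}=\cdots=\psi_{m_0+m_1}=0$ and $\sigma\sigma'=\tilde\sigma\tilde\sigma'+\bar\sigma\bar\sigma'$ into \eqref{SHJB} rewrites it as \eqref{SHJB-SP}. Finally, Theorem \ref{thm-EU-main} already gives $\psi+Du\sigma\in\cL^2((L^2)^m)$ for every $\sigma\in\cU$; splitting this vector into its $\tilde W$- and $\bar W$-blocks and using $\psi_i=0$ for $i>m_0$ isolates the last $m_1$ entries as $Du\,\bar\sigma$, whence the gradient estimate $Du\,\bar\sigma\in\cL^2((L^2)^{m_1})$ for every $\sigma=(\tilde\sigma,\bar\sigma)\in\tilde\cU$.

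For the nondegenerate case I would specialize the gradient estimate to the constant control $\sigma\equiv\sigma_0=(\tilde\sigma_0,\bar\sigma_0)$, which lies in $\tilde\cU$, obtaining $Du\,\bar\sigma_0\in\cL^2((L^2)^{m_1})$. Because $\bar\sigma_0\bar\sigma_0'$ is a constant positive-definite $d\times d$ matrix, it is invertible, and from the identity $(Du\,\bar\sigma_0)\bar\sigma_0'=Du(\bar\sigma_0\bar\sigma_0')$ one recovers $Du$ explicitly as
\[
Du=(Du\,\bar\sigma_0)\,\bar\sigma_0'(\bar\sigma_0\bar\sigma_0')^{-1},
\]
so that $Du$ is the image of the $L^2$-field $Du\,\bar\sigma_0$ under a fixed bounded linear map; hence $Du\in\cL^2((L^2)^d)$. (Equivalently, $|Du\,\bar\sigma_0|^2\geq\lambda|Du|^2$ with $\lambda>0$ the least eigenvalue of $\bar\sigma_0\bar\sigma_0'$.) Once $Du\in\cL^2((L^2)^d)$ is known, $Du\,\sigma_0\in\cL^2((L^2)^m)$ since $\sigma_0$ is constant, and subtracting it from $\psi+Du\,\sigma_0\in\cL^2((L^2)^m)$ yields $\psi\in\cL^2((L^2)^m)$; combined with $\psi_i=0$ for $i>m_0$ this is precisely $\psi\in\cL^2((L^2)^{m_0})$.

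The step I expect to be the main obstacle is the rigorous proof that $\psi_i=0$ for $i>m_0$, as this is the only place where partial Markovianity is genuinely exploited. The subtlety is that the $\sF$- and $\tilde{\sF}$-semimartingale decompositions of $\langle u(t),\varphi(t)\rangle$ need not coincide a priori: one first notes, via Stricker's theorem, that a $\tilde{\sF}$-adapted $\sF$-semimartingale is an $\tilde{\sF}$-semimartingale, then represents its $\tilde{\sF}$-martingale part as $\int(\cdot)\,d\tilde W$ by the Brownian representation in $\tilde{\sF}$, and finally uses that $\tilde W$ remains an $\sF$-Brownian motion (so this $\tilde{\sF}$-martingale is still an $\sF$-martingale) to identify the two decompositions by uniqueness and conclude that the $\bar W$-block of $\psi$ vanishes. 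Everything afterwards---the reduction to \eqref{SHJB-SP}, the gradient estimate, and the nondegenerate inversion---is then either a direct substitution or elementary linear algebra on the constant matrix $\bar\sigma_0\bar\sigma_0'$.
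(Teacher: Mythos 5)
Your proposal is correct and follows essentially the same route as the paper: $\tilde{\sF}_t$-measurability via Lemma \ref{lem-meas-value-funct}, vanishing of $\psi_i$ ($i>m_0$) by the semimartingale/adaptedness argument of Remark \ref{rmk-uniq-diffn} with a constant control, the gradient estimate by splitting $\psi+Du\sigma\in\cL^2((L^2)^m)$ into its $\tilde W$- and $\bar W$-blocks, and the nondegenerate case by inverting $\bar\sigma_0\bar\sigma_0'$. You in fact supply more detail than the paper does at the two places it is terse (the identification of the $\sF$- and $\tilde\sF$-semimartingale decompositions, and the final linear-algebra inversion), and those details are sound.
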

To study further the regularity of the solution $(u,\psi)$ for BSPDE \eqref{SHJB}, we assume

\medskip
   $({\mathcal A} 3)$ \it For each $(t,\sigma)\in [0,T]\times U$, $G(\cdot)$ and
$
  f(t,\cdot,\sigma)
$
are deterministic functions on $\bR^d$.\rm

\medskip
Under assumptions $(\cA1)$ and $(\cA3)$, the control problem \eqref{Control-probm} is of Markovian type and has been extensively studied (see \cite{Flem-Soner-2006controlled} and references therein). This can also be seen as a particular case of Proposition \ref{prop-parabolc-BSPDE} with $m_0=0$ and $m_1=m$.

\begin{cor}\label{cor-markovian}
Under assumptions $(\cA1)$ and $(\cA3)$, for the unique weak solution $(u,\psi)$ of BSPDE \eqref{SHJB}, $u$ is deterministic and $\psi\equiv 0$ and for any non-random control $\sigma$, there holds the gradient estimate $Du{\sigma}\in\cL^2((L^2)^{m_1})$. In particular,  BSPDE \eqref{SHJB} is equivalent to the following deterministic PDE
{\small
\begin{equation}\label{SHJB-determn}
  \left\{\begin{array}{l}
  \begin{split}
  -\partial_tu(t,x)=\,&\displaystyle  
  \essinf_{\sigma\in U} \bigg\{\text{tr}\left(\frac{1}{2}{\sigma}{\sigma}'D^2 u\right)(t,x)
        +f(t,x,\sigma)
                \bigg\}, \quad
                     (t,x)\in Q;\\
    u(T,x)=\, &G(x), \quad x\in\bR^d.
    \end{split}
  \end{array}\right.
\end{equation}}
\end{cor}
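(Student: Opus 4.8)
The plan is to obtain the corollary as the degenerate specialization of Proposition \ref{prop-parabolc-BSPDE} corresponding to $m_0=0$ and $m_1=m$, exactly as announced in the paragraph preceding the statement. Under $(\cA3)$ the terminal datum $G$ and, for every $(t,\sigma)$, the running cost $f(t,\cdot,\sigma)$ are deterministic functions of $x$, so $(\cA3)$ is precisely $(\cA2)$ with the $\tilde{W}$-component absent, i.e.\ with $m_0=0$ and $m_1=m$. In this situation $\tilde{W}$ is the zero-dimensional Wiener process, and the filtration $\{\tilde{\sF}_t\}$ generated by $\tilde{W}$ and augmented by the $\bP$-null sets is \textit{trivial}, containing only sets of probability $0$ or $1$. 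First I would record these identifications, and then read off each of the three assertions from Proposition \ref{prop-parabolc-BSPDE}.

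Next I would translate the conclusions of Proposition \ref{prop-parabolc-BSPDE} through the triviality of $\{\tilde{\sF}_t\}$. Since $u(t,x)$ is $\tilde{\sF}_t$-measurable for every $(t,x)$ and $\tilde{\sF}_t$ is trivial, $u(t,x)$ equals a constant $\bP$-a.s., i.e.\ $u$ is deterministic. The conclusion $\psi_i=0$ for $i=m_0+1,\dots,m_0+m_1$ now ranges over all $i=1,\dots,m$, so $\psi\equiv 0$. Finally, a $U$-valued process adapted to the trivial filtration $\{\tilde{\sF}_t\}$ is $\bP$-a.s.\ non-random, so that $\tilde{\cU}$ is exactly the class of deterministic controls; the gradient estimate $Du\bar{\sigma}\in\cL^2((L^2)^{m_1})$ of Proposition \ref{prop-parabolc-BSPDE} therefore reads $Du\sigma\in\cL^2((L^2)^{m})$ for every non-random control $\sigma$. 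No non-degeneracy hypothesis is needed here, since we only claim the contracted estimate on $Du\sigma$ rather than the full gradient $Du$.

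It remains to identify the equation. Inserting $\psi\equiv 0$ together with the determinism of $u$ into the weak formulation \eqref{eq-defn-weaksltn}, the It\^o term $\int_t^T\langle\varphi(s),\psi(s)\,dW_s\rangle$ and the first-order term $\langle\sigma\psi,D\varphi\rangle$ both drop out, so that for each non-random $\sigma$ the identity reduces to a purely deterministic relation between $u$, $f(\cdot,\cdot,\sigma)$ and the measure $\mu^{\sigma}$; the vanishing-infimum condition \eqref{eq-defn-weak-sltn-HJB} then expresses precisely that $u$ is a weak (distributional) solution of the deterministic HJB equation \eqref{SHJB-determn}. Since $u=V$ by Theorem \ref{thm-EU-main}, this $u$ coincides with the value function of the now Markovian control problem \eqref{Control-probm}, which yields the asserted equivalence.

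I do not expect a genuine obstacle: the whole content is a specialization of Proposition \ref{prop-parabolc-BSPDE}. The only points demanding care are the two implications drawn from the triviality of $\{\tilde{\sF}_t\}$ -- that $\tilde{\sF}_t$-measurability forces determinism and that $\{\tilde{\sF}_t\}$-adaptedness forces the admissible controls to be non-random -- together with the final bookkeeping check that deleting the martingale part genuinely collapses the weak formulation of \eqref{SHJB} to the distributional formulation of \eqref{SHJB-determn}.
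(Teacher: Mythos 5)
Your proposal is correct and follows exactly the route the paper takes: the paper itself presents Corollary \ref{cor-markovian} as the special case $m_0=0$, $m_1=m$ of Proposition \ref{prop-parabolc-BSPDE}, and your additional observations (triviality of $\{\tilde\sF_t\}$ forcing $u$ deterministic and $\psi\equiv 0$, adaptedness to the trivial filtration forcing controls to be non-random, and the collapse of the weak formulation \eqref{eq-defn-weaksltn} once the martingale terms vanish) merely make explicit the details the paper leaves implicit. No gap.
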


\begin{rmk}\label{rmk-determn-HJB}
According to the viscosity solution theory for HJB equations (see \cite{crandall-1992-usr-gude,Flem-Soner-2006controlled} for instance), PDE \eqref{SHJB-determn} admits a unique viscosity solution $u$ which coincides with the value function $V(t,x)$ and hence with the weak solution in the sense of Definition \ref{def-weak-sltn-HJB}. In particular, if we assume further that there exists a positive constant $\kappa$ such that for any $\sigma\in U$, $\sigma\sigma'>\kappa \mathbb{I}^{d\times d}$, then PDE \eqref{SHJB-determn} is uniformly parabolic and by the regularity estimates of the viscosity solutions for fully nonlinear parabolic PDE (see \cite{crandall2000lp,krylov-1987,wang1992-I,wang1992-II}), there exists $\bar\alpha\in(0,1)$ such that  $u\in C^{1+\frac{\bar\alpha}{2},2+\bar\alpha}([0,T-\eps]\times\bR^d)$ for any $\eps\in(0,T)$, where the \textit{time-space} H\"older space $C^{1+\frac{\bar\alpha}{2},2+\bar\alpha}([0,T-\eps]\times\bR^d)$ is defined as usual. Then PDE holds in the classical sense and applying the It\^o-Wentzell formula, we have for each $\bar{\sigma}\in\cU$,
{\small
\begin{align*}
&\mu^{\bar{\sigma}}(dt,dx)
\\
&=\left(
                \text{tr}\left(\frac{1}{2}\bar{\sigma} \bar{\sigma}'D^2 u+\bar{\sigma} D\psi^{\bar{\sigma}}\right)(t,x)
        +f(t,x,\bar{\sigma})
             -\essinf_{\sigma\in U} \bigg\{\frac{1}{2}\text{tr}\left({\sigma}{\sigma}'D^2 u\right)(t,x)
        +f(t,x,\sigma)
                \bigg\}
                 \right)
                 dtdx\\
&dK^{\bar{\sigma}}_t(x)
\\
&=
\bigg(
                \text{tr}\left(\frac{1}{2}\bar{\sigma} \bar{\sigma}'D^2 u+\bar{\sigma} D\psi^{\bar{\sigma}}\right)(t,x+X^{\bar{\sigma}}_t)+f(t,x+X^{\bar{\sigma}}_t,\bar{\sigma})
        \\
        &
        \quad\quad\quad
                -\essinf_{\sigma\in U} \bigg\{\frac{1}{2}\text{tr}\left({\sigma}{\sigma}'D^2 u\right)(t,x+X^{\bar{\sigma}}_t)
        +f(t,x+X^{\bar{\sigma}}_t,\sigma)
                \bigg\}  \bigg)
                 dt.
\end{align*}
}
When $\bar{\sigma}$ chosen to be deterministic process, there holds $\psi^{\bar{\sigma}}\equiv 0$ and PDE \eqref{SHJB-determn} also writes
{\small
\begin{equation}\label{SHJB-determn-sigma}
  \left\{\begin{array}{l}
  \begin{split}
  -\partial_tu(t,x)+\mu^{\bar{\sigma}}(dt,x)=\,&\displaystyle  
  \text{tr}\left(\frac{1}{2}{\bar{\sigma}}{\bar{\sigma}}'D^2 u\right)(t,x)
        +f(t,x,\bar{\sigma})
                , \quad
                     (t,x)\in Q;\\
    u(T,x)=\, &G(x), \quad x\in\bR^d.
    \end{split}
  \end{array}\right.
\end{equation}}
\end{rmk}

\section{Comment}

Notice that in the solution pair of Theorem \ref{thm-EU-main}, $u$ inherits the integrability and spacial H\"older continuity of coefficients $f$ and $G$. With the contraction mapping principle, under certain assumptions it seems reasonable to extend the results of above section to the fully nonlinear stochastic HJB equation of the following form:
\begin{equation}\label{SHJB-nonlinear}
  \left\{\begin{array}{l}
  \begin{split}
  -du(t,x)=\,&\displaystyle 
  \essinf_{\sigma\in U} \bigg\{\text{tr}\left(\frac{1}{2}\sigma \sigma' D^2 u+\sigma D\psi\right)(t,x)
        +f(t,x,u,\psi+Du\sigma,\sigma)
                \bigg\} \,dt\\ &\displaystyle
           -\psi(t,x)\, dW_{t}, \quad
                     (t,x)\in Q;\\
    u(T,x)=\, &G(x), \quad x\in\bR^d,
    \end{split}
  \end{array}\right.
\end{equation}
where $f$ is allowed to depend on $u$, $Du$ and $\psi$.  On the other hand, Peng's open problem (see \cite{Peng_92,peng2011backward}) is claimed for BSPDEs of the following form:
{\small
\begin{equation}\label{SHJB-peng-nonlinear}
  \left\{\begin{array}{l}
  \begin{split}
  -du(t,x)=\,&\displaystyle 
  \essinf_{v\in U} \bigg\{\text{tr}\left(\frac{1}{2}\sigma \sigma'(t,x,v) D^2 u(t,x)+\sigma(t,x,v) D\psi(t,x)\right)
       +b'(t,x,v)Du(t,x) \\ &\displaystyle
          \quad\quad\quad+f(t,x,\sigma)
                \bigg\} \,dt -\psi(t,x)\, dW_{t}, \quad
                     (t,x)\in Q;\\
    u(T,x)=\, &G(x), \quad x\in\bR^d,
    \end{split}
  \end{array}\right.
\end{equation}
}
while throughout this work, the controlled leading coefficients of BSPDE \eqref{SHJB} are assumed to be space-homogeneous and $b\equiv 0$ for simplicity. The key reason is that we need the norm equivalence relationship \eqref{eq-equiv-norm}. Hence, to study the general case like BSPDE \eqref{SHJB-peng-nonlinear}, we need to verify the associated  norm equivalence relationships under certain assumptions on $b$ and $\sigma$. We would postpone such generalizations along the  above two lines to future work, as many additional technical efforts are needed.

\begin{appendix}
\section{Generalized It\^o-Wentzell formula by Krylov \cite{Krylov_09}}

Denote  by $\mathscr{D}$ the space of real-valued Schwartz distributions on
$C^{\infty}_{c}(\bR^d)$.  By $\mathfrak{D}$ we denote the set of all $\mathscr{D}$-valued functions defined on
$\Omega\times [0,T]$ such that, for any $u\in \mathfrak{D}$ and $\phi\in C_c^{\infty}$, the
function $\langle u,\,\phi\rangle$ is $\sP$-measurable.

For $p=1,2$ we denote by $\mathfrak{D}^p$ the totality of $u\in\mathfrak{D}$ such that for any $R_1,R_2\in(0,\infty)$ and $\phi\in C_c^{\infty}$, we have
$$
\int_0^{R_2} \sup_{|x|\leq R_1} |\langle u(t,\cdot),\phi(\cdot-x)\rangle|^p \,dt<\infty \quad \text{a.s.}
$$

For $u,f,g\in \mathfrak{D}$, we say that the equality
\begin{equation}\label{eq 1}
du(t,x)=f(t,x)\,dt+g(t,x)\,dW_t, \quad t\in [0,T],
\end{equation}
holds in the sense of distribution if $f {1}_{[0,T]} \in \mathfrak{D}^1$, $g {1}_{[0,T]} \in \mathfrak{D}^2$ and for any $\phi\in C_c^{\infty}$ with probability one we have for all $t\in[0,T]$
$$
\langle u(t,\cdot),\,\phi\rangle=
\langle u(0,\cdot),\,\phi\rangle+\int_0^t\langle f(s,\cdot),\,\phi\rangle\,ds+\int_0^t\langle g(s,\cdot)\,dW_s,\,\phi\rangle.
$$

Let $x_t$ be an $\bR^d$-valued predictable process of the following form
$$
x_t=\int_0^tb_s\,ds+\int_0^t\beta_s\,dW_s,
$$
where $b$ and $\beta$ are predictable processes such that for all $\omega\in\Omega$ and $s\in[0,T]$,
we have
$$
{\rm tr } (\beta_s\beta'_s) <\infty \quad \hbox { \rm and } \quad \int_0^T [|b_t|+{\rm tr } (\beta_t\beta'_t) ]\,dt<\infty.
$$
\begin{thm}[Theorem 1 of \cite{Krylov_09}]\label {Ito-Wentzell}
  Assume that \eqref{eq 1} holds in the sense of distribution and define
  $$
  v(t,x):=u(t,x+x_t).
  $$
  Then we have
  \begin{equation*}
    \begin{split}
      dv(t,x)
      =&\bigg(
      f(t,x+x_t)+\text{tr}\left\{ \frac{1}{2}\beta_t\beta'_tD^2v(t,x)+\beta_t Dg(t,x+x_t)\right\}
      +b'_tDv(t,x)\bigg)\,dt\\
      &+\left( g(t,x+x_t)+Dv(t,x)\beta_t \right)\,dW_t
      ,\quad t\in[0,T]
    \end{split}
  \end{equation*}
  holds in the sense of distribution.
\end{thm}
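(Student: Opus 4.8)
The plan is to establish the formula in its weak (distributional) form by testing against an arbitrary $\phi\in C_c^{\infty}$ and analyzing the real-valued semimartingale $\langle v(t,\cdot),\phi\rangle$. The first observation is that $\langle v(t,\cdot),\phi\rangle=\langle u(t,\cdot),\phi(\cdot-x_t)\rangle$, so the object of interest is the pairing of the distribution-valued semimartingale $u$ against the \emph{random, time-dependent} test function $\Phi_t:=\phi(\cdot-x_t)$. Since $\phi$ is smooth and compactly supported and $x_t=\int_0^t b_s\,ds+\int_0^t\beta_s\,dW_s$, the finite-dimensional It\^o formula yields the dynamics of $\Phi_t$ as a $C_c^{\infty}$-valued It\^o process, namely $d\Phi_t=\big[-D\phi(\cdot-x_t)b_t+\tfrac12\text{tr}(\beta_t\beta_t'D^2\phi(\cdot-x_t))\big]\,dt-D\phi(\cdot-x_t)\beta_t\,dW_t$.

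The core computation is then the It\^o product rule applied to $\langle u(t,\cdot),\Phi_t\rangle$, which contributes a drift term, a martingale term, and, crucially, a joint quadratic-variation term between the $dW$-part $g\,dW_t$ of $u$ and the $dW$-part $-D\phi(\cdot-x_t)\beta_t\,dW_t$ of $\Phi$. After integrating by parts in $x$ to move each spatial derivative off $\phi$ and onto $u$ or $g$, and using $Du(t,\cdot+x_t)=Dv(t,\cdot)$ and $D^2u(t,\cdot+x_t)=D^2v(t,\cdot)$, the four drift contributions reproduce, against $\phi$, exactly $f(t,x+x_t)$, the term $b_t'Dv$, the term $\tfrac12\text{tr}(\beta_t\beta_t'D^2v)$, and the cross term $\text{tr}(\beta_t Dg(t,x+x_t))$, while the two martingale contributions reproduce $g(t,x+x_t)$ and $Dv(t,x)\beta_t$. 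Since $\phi$ is arbitrary, this is precisely the asserted identity in the sense of distributions.

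The delicate point is that $u,f,g$ are only distributions and $x_t$ is genuinely random, so neither the product rule nor the integration by parts can be applied verbatim. I would make the argument rigorous by spatial mollification: set $u^{\eps}=u*\rho_{\eps}$, $f^{\eps}=f*\rho_{\eps}$, $g^{\eps}=g*\rho_{\eps}$, which are smooth random fields satisfying $du^{\eps}=f^{\eps}\,dt+g^{\eps}\,dW_t$. For the smooth field $u^{\eps}$ the classical It\^o-Wentzell formula holds pointwise in $x$ (the product-rule and cross-variation computation above then being fully justified), giving the formula for $v^{\eps}(t,x)=u^{\eps}(t,x+x_t)$. Pairing with $\phi$ and integrating by parts so as to express every term through $\langle u^{\eps}(t,\cdot),\cdot\rangle$ and $\langle g^{\eps}(t,\cdot),\cdot\rangle$ only, I would then let $\eps\to0$: the pointwise convergences $\langle u^{\eps},\cdot\rangle\to\langle u,\cdot\rangle$ and $\langle g^{\eps},\cdot\rangle\to\langle g,\cdot\rangle$ against the fixed (derivatives of) $\phi(\cdot-x_t)$, combined with the $\mathfrak{D}^1$- and $\mathfrak{D}^2$-integrability hypotheses, allow passage to the limit in the Lebesgue integrals and in the stochastic integrals, the latter via the Burkholder-Davis-Gundy inequality and dominated convergence.

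The main obstacle I anticipate is this limiting passage in the stochastic terms, and in particular controlling the cross-variation contribution $\text{tr}(\beta Dg)$ uniformly in $\eps$, since the mollified gradients $Dg^{\eps}$ do not converge as functions while only the integration-by-parts pairings $\langle g^{\eps},D\phi(\cdot-x_t)\beta_t\rangle$ remain controlled. Keeping all derivatives on the smooth, compactly supported test function throughout, rather than on the fields, is exactly what renders the limit tractable; verifying the requisite uniform integrability from the $\mathfrak{D}^p$ assumptions is the technical heart of the proof.
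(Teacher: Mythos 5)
This statement is quoted verbatim from Krylov \cite{Krylov_09} and the paper offers no proof of it, so there is nothing internal to compare against; measured against Krylov's original argument, your proposal is essentially the same proof. The reduction to $\langle u(t,\cdot),\phi(\cdot-x_t)\rangle$, the mollification to justify the product rule and the cross-variation term, the systematic integration by parts keeping all derivatives on the test function, and the passage to the limit using the $\mathfrak{D}^1$/$\mathfrak{D}^2$ hypotheses are exactly the ingredients of Krylov's proof. The only point you gloss over is that the ``classical'' It\^o--Wentzell formula for the mollified fields is itself not entirely free (the regularity in $t$ is minimal and $W$ may be infinite-dimensional), but taking it as the base case is a reasonable division of labour.
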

We note that in the It\^o-Wentzell formula by Krylov \cite[Theorem 1]{Krylov_09}, the Wiener process $(W_t)_{t\geq 0}$ can be general separable Hilbert space-valued.

\section{Banach space-valued BSDEs and degenerate BSPDEs}\label{sec:Banch-BSDE}

For given $p\in(1,\infty)$ and $n\in
(-\infty,\infty)$, we denote by $H ^{n}_{p}$ the space of Bessel
potentials, that is
$$H ^{n}_{p}:=\{\phi\in \mathscr{D}:(1-\Delta)^{\frac{n}{2}}\phi\in L^{p}(\bR ^{d})\}$$
with the Sobolev norm
$$\|\phi\|_{n,p}:=\|(1-\Delta)^{\frac{n}{2}}\phi\|_{L^p(\bR^d)}, ~~ \phi\in
H^{n}_{p}.$$ 
It
is well known that $H ^{n}_{p}$ is a Banach space with the norm
$\|\cdot\|_{n,p}$ and  $C^{\infty}_{c}$ is dense in $H
^{n}_{p}$.

To the end, we take $p\in(1,\infty)$ and $n\in\bR$. Denote by $\bH^{n}_{p,2}$ the subspace of $\mathfrak{D}$ such that,
\[
\|u\|_{\bH_{p,2}^{n}}:=\left\{E\int_{\bR^d}\left(\int_{0}^{T}
    |(1-\Delta)^{\frac{n}{2}}u(t,x)|^{2}dt\right ) ^{\frac{p}{2}}dx\right\}^{1/p}
    <\infty,\quad \forall\, u\in\bH^{n}_{p,2}.
\]

\begin{defn}
    For a function $u\in
    \cL^p(H^n_p),$ we write $u\in\bH_{p,\infty}^n$ if

    (i) there exists $A(u)\in\sF_T\times\cB(\bR^d)$,
    $\bP\otimes dx(A(u))=0$, such that
        for any
        $(\omega,x)\in  \Omega\times \bR^d \setminus A(u),$
        $(1-\Delta)^{n/2}u(\cdot,x)$ is continuous on $[0,T];$

    (ii) $\|u\|_{\bH_{p,\infty}^{n}}:=
        \left(E\int_{\bR^{d}}\sup_{t\in[0,T]}
        |(1-\Delta)^{\frac{n}{2}}u(t,x)|^{p}\ dx\right)^{1/p}<\infty.$
\end{defn}

 Consider functional
\[
F:\Omega\times[0,T]\times\bR^d\times \cL^p(H_p^n)\times (\bH_{p,2}^n)^m \rightarrow \bR
\]
which satisfies: (i) for each $(u,\psi)\in  \cL^p(H_p^n)\times (\bH_{p,2}^n)^m$, $F(\cdot,\cdot,u,\psi)\in\cL^p(H_p^n)$; (ii) there exists $L_2>0$ such that for any $(u_1,\psi_1), (u_2,\psi_2)\in \cL^p(H_p^n)\times (\bH_{p,2}^n)^m$,
\begin{align*}
&\|\left(F(\cdot,\cdot,u_1,\psi_1)-F(\cdot,\cdot,u_2,\psi_2)\right)1_{[t,T]}\|_{\cL^p(H^n_p)}\\
&\leq L_2 \left(\|(u_1-u_2)1_{[t,T]}\|_{\cL^p(H^n_p)}+\sum_{i=1}^m\|(\psi^i_1-\psi^i_2)1_{[t,T]}\|_{\bH^n_{p,2}}\right)
\quad\text{a.s.},\quad \forall\,t\in[0,T].
\end{align*}
Given $\Psi\in L^{p}(\Omega,\sF_{T};H_{p}^{n})$, consider the BSDE
\begin{equation}\label{bsde}
  \left\{\begin{array}{l}
    \begin{split}
      -du(t,x)=F(t,x,u,\psi)\,dt-\psi(t,x)\,dW_t,~~~(t,x)\in Q,
    \end{split}\\
    \begin{split}
      u(T,x)=\Psi(x),~~~~~~~~~x\in \bR^{d}.
    \end{split}
  \end{array}\right.
\end{equation}

\begin{prop}\label{prop-banah-BSDE}
   For $(F,\Psi)$ given above, there exists a unique pair $(u,\psi)\in\left(\cL^p(H_{p}^{n})\cap
  \bH_{p,\infty}^{n}\right)\times\left(\bH_{p,2}^{n}\right)^m$ such that for any $ \phi \in \mathcal{D}_T$
   and $\tau\in[0, T],$ there holds a.s.
\begin{align*}
    \langle u(\tau,\cdot),\phi(\tau) \rangle= \langle \Psi,\phi(T) \rangle+
            \!\int_{\tau}^{T}\!\!\!\left(  \langle F (s,\cdot,u,\psi),\phi(s) \rangle - \langle u,\,\partial_s\phi \rangle (s) \right)\, ds
                    -\!\int_{\tau}^{T}\!\!\! \langle \psi(s)\, dW_{s},\phi(s) \rangle,
\end{align*}
  with
  {\small
      \begin{equation}\label{Estimate of bsde}
      \|u\|_{\bH_{p,\infty}^{n}}+ \|u\|_{\cL^p(H_p^n)} +\sum_{i=1}^m\|\psi^i\|_{\bH_{p,2}^{n}}\leq
      C(p,T,L_2)\left(\|F(\cdot,\cdot,0,0)\|_{\cL^p(H^{n}_{p})}+\|\Psi\|_{L^{p}(\Omega,\sF_{T};H_{p}^{n})}
      \right).
   \end{equation}
   }
\end{prop}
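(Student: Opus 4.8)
The plan is to prove Proposition \ref{prop-banah-BSDE} by first reducing to the case $n=0$ and then solving the Lipschitz problem by a fixed-point argument built on the explicitly solvable linear case. Since the driver contains no spatial differential operator acting on $(u,\psi)$, the Bessel operator $(1-\Delta)^{n/2}$ commutes with the whole equation; as it is an isometric isomorphism from $H^n_p$ onto $L^p$, setting $\tilde u=(1-\Delta)^{n/2}u$, $\tilde\psi=(1-\Delta)^{n/2}\psi$, $\tilde F(t,x,\tilde u,\tilde\psi)=(1-\Delta)^{n/2}F(t,x,(1-\Delta)^{-n/2}\tilde u,(1-\Delta)^{-n/2}\tilde\psi)$ and $\tilde\Psi=(1-\Delta)^{n/2}\Psi$ transforms \eqref{bsde} into an equation of the same type with $n=0$, and the Lipschitz bound on $F$ is preserved with the same constant $L_2$. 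Hence I would assume $n=0$ throughout, so that $H^0_p=L^p$ and the three norms reduce to $\cL^p(L^p)$, $\bH^0_{p,\infty}$ and $\bH^0_{p,2}$.

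First I would treat the linear case, i.e. $F=F(t,x)\in\cL^p(L^p)$ independent of $(u,\psi)$. Because there is no coupling in $x$ through derivatives, for $dx$-a.e. $x$ the process $M_t(x):=E_{\sF_t}\big[\Psi(x)+\int_0^T F(s,x)\,ds\big]$ is a square-integrable martingale, and the martingale representation theorem supplies $\psi(\cdot,x)$ with $M_t(x)=M_0(x)+\int_0^t\psi(s,x)\,dW_s$; setting $u(t,x)=E_{\sF_t}\big[\Psi(x)+\int_t^T F(s,x)\,ds\big]$ then yields a pair solving the weak formulation after testing against $\phi\in\cD_T$ and integrating by parts in $t$. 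The a priori estimate \eqref{Estimate of bsde} follows by bounding each norm separately: Doob's maximal inequality in $L^p(\Omega)$ controls $\|u\|_{\bH^0_{p,\infty}}$ by the terminal data, the Burkholder--Davis--Gundy inequality controls $\|\psi\|_{\bH^0_{p,2}}$, and in both cases Minkowski's integral inequality is used to interchange the outer $\int_{\bR^d}dx$ with the $L^p(\Omega)$ and time integrations; the $\cL^p(L^p)$ bound on $u$ is then immediate from H\"older in $t$.

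For the general Lipschitz driver I would set up a contraction. On the Banach space $\mathcal{X}:=\big(\cL^p(L^p)\cap\bH^0_{p,\infty}\big)\times(\bH^0_{p,2})^m$ define $\Gamma(v,\eta)=(u,\psi)$ to be the linear solution with frozen driver $F(\cdot,\cdot,v,\eta)$. Applying the linear estimates to the difference $\Gamma(v_1,\eta_1)-\Gamma(v_2,\eta_2)$ and invoking the Lipschitz hypothesis with constant $L_2$ bounds the $\mathcal{X}$-norm of the image difference by $C(p,T)L_2$ times a norm of $(v_1-v_2,\eta_1-\eta_2)$. To turn this into a genuine contraction I would either restrict to a short interval $[T-\delta,T]$ with $\delta=\delta(p,T,L_2)$ small enough that $C\delta<1$ and then concatenate finitely many such intervals to cover $[0,T]$, or, equivalently, introduce an exponentially weighted equivalent norm with factor $e^{\beta s}$ inside the time integral and take $\beta$ large. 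The Banach fixed point theorem then gives existence and uniqueness of $(u,\psi)\in\mathcal{X}$, and uniqueness in the full class follows since any two solutions have difference solving the homogeneous problem. Passing the linear estimate to the fixed point and reinserting the $L_2$-Lipschitz term via Gr\"onwall (or summing over the finitely many subintervals) produces the stated constant $C(p,T,L_2)$ in \eqref{Estimate of bsde}.

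The main obstacle I anticipate is the mixed nature of the norms $\bH^0_{p,2}$ and $\bH^0_{p,\infty}$, which place an $L^2$- or $\sup$-in-time norm \emph{inside} an $L^p(\Omega\times\bR^d)$ norm rather than the usual Hilbert ($p=2$) structure of BSDE theory. Consequently the It\^o/energy identity that trivially controls $\|\psi\|$ when $p=2$ is unavailable, and every estimate must instead route the stochastic-integral and supremum bounds through Burkholder--Davis--Gundy and Doob applied pathwise in $x$, combined with Minkowski's integral inequality to move the spatial integral outside; keeping the constants uniform and independent of $x$ while doing so, and ensuring the weighted-norm contraction constant genuinely depends only on $(p,T,L_2)$, is the delicate bookkeeping on which the whole argument rests.
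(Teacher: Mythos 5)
Your proposal is correct and follows essentially the same route the paper intends: the paper omits the proof, remarking only that the linear case is \cite[Lemmas 3.1 and 3.2]{DuQiuTang10} (which is exactly your pointwise-in-$x$ martingale representation combined with Doob, Burkholder--Davis--Gundy and integration over $x$) and that the Lipschitz case then follows by standard Picard iteration, i.e.\ your contraction argument. The reduction to $n=0$ via the isometry $(1-\Delta)^{n/2}\colon H^n_p\to L^p$ is likewise implicit in how the norms $\bH^n_{p,2}$ and $\bH^n_{p,\infty}$ are defined, so your write-up simply supplies the details the author chose to suppress.
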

In fact, the above proposition is generalized from  \cite[Lemmas 3.1 and 3.2]{DuQiuTang10} where $F$ is independent of $u$ and $\psi$. Since the proof is just a standard application of Picard iteration for the BSDE theory (see \cite{Karoui_Peng_Quenez,ParPeng_90}), we omit it.

An immediate consequence of Theorem \ref{Ito-Wentzell} and Proposition \ref{prop-banah-BSDE} is the following
\begin{cor}\label{cor-degenerate-BSPDE}
Assume the same hypothesis of Proposition \ref{prop-banah-BSDE} with $p=2$. Given $\sigma\in\cU$, the following BSPDE
\begin{equation*}
  \left\{\begin{array}{l}
  \begin{split}
  -d{u}(t,x)=\,&\displaystyle 
  \left[\text{tr}\left(\frac{1}{2}\sigma \sigma' D^2 {u}+\sigma D{\psi}\right)(t,x)+F(t,x,u,\psi+Du\sigma)\right]
                 \,dt
           -{\psi}(t,x)\, dW_{t},
                     \,\,(t,x)\in Q;\\
    {u}(T,x)=\, &\Psi(x), \quad x\in\bR^d,
    \end{split}
  \end{array}\right.
\end{equation*}
admits a unique solution $(u,\psi)\in \cL^2(H_{2}^{n})\times\cL^2\left((H_{2}^{n-1})^m\right)$, i.e., for any $(t,\varphi)\in[0,T]\times\mathcal{D}_T$,
\begin{equation*}
  \begin{split}
    & \langle  u,\,\varphi  \rangle(t) +\!\!\int_t^T\! \langle u,\,\partial_s \varphi    \rangle(s)  \,ds+\int_t^T \langle \varphi(s),\,\psi(s)\,dW_s \rangle\\
    =\,&
    \langle \Psi,\varphi(T) \rangle+\!\!\int_t^T\!\left[  \langle F(s,\cdot,u,\psi+Du \sigma ),\,\varphi(s)  \rangle
    +\frac{1}{2} \langle u,\, \text{tr}\left(\sigma\sigma'D^2\varphi\right)  \rangle(s) -
     \langle \sigma\psi,\, D\varphi  \rangle (s) \right]\,ds
     ,\,\text{a.s.}
  \end{split}
\end{equation*}
Moreover, for this solution, one has
{\small
\begin{align*}
      &\left(E\int_{\bR^d} \sup_{t\in[0,T]} \left|(1-\Delta)^{\frac{n}{2}}u(t,x+X_t^{\sigma})\right|^2  \,dx\right)^{1/2}+ \|u\|_{\cL^2(H_2^n)} +\|\psi+Du\sigma\|_{\cL^2((H_{2}^{n})^m)}\\
      &\leq
      C\left(\|F(\cdot,\cdot,0,0)\|_{\cL^2(H^{n}_{2})}+\|\Psi\|_{L^{2}(\Omega,\sF_{T};H_{2}^{n})}
      \right),
   \end{align*}
   }
 with $C$ depending on $T$ and $L_2$.
\end{cor}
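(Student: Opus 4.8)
The plan is to reduce this degenerate BSPDE to the zeroth-order (non-degenerate) Banach space-valued BSDE of Proposition \ref{prop-banah-BSDE} by the random space shift $x\mapsto x+X^\sigma_t$ together with the It\^o-Wentzell formula, exactly as was done in the linear case when passing from \eqref{SHJB-linr-sigm} to the trivial equation \eqref{SHJB-linr-sigm-trvl}. First I would set
\[
(\bar u(t,x),\bar\psi(t,x)):=\big(u(t,x+X^\sigma_t),\,(\psi+Du\sigma)(t,x+X^\sigma_t)\big),\qquad (t,x)\in Q,
\]
and, writing $\Theta^\sigma_t h(\cdot):=h(\cdot+X^\sigma_t)$ for the time-dependent predictable translation operator (with spatial inverse $\Theta^{-\sigma}_t h(\cdot)=h(\cdot-X^\sigma_t)$), define the shifted datum $\bar\Psi:=\Theta^\sigma_T\Psi$ and the shifted functional $\bar F(t,x,\bar u,\bar\psi):=\Theta^\sigma_t\big[F\big(t,\cdot,\Theta^{-\sigma}\bar u,\Theta^{-\sigma}\bar\psi\big)\big](x)$. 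Since $\sigma\sigma'$ is the leading diffusion and the $dW$-integrand produced by the shift is precisely $\bar\psi=\psi+Du\sigma$, Theorem \ref{Ito-Wentzell} shows (in the sense of distributions, so as to accommodate $n<0$) that $(u,\psi)\in\cL^2(H^n_2)\times\cL^2((H^{n-1}_2)^m)$ solves the stated weak BSPDE if and only if $(\bar u,\bar\psi)$ solves the trivial BSDE
\[
-d\bar u(t,x)=\bar F(t,x,\bar u,\bar\psi)\,dt-\bar\psi(t,x)\,dW_t,\qquad \bar u(T,x)=\bar\Psi(x),
\]
which is of the form \eqref{bsde} with no leading operator.

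Next I would verify that $(\bar F,\bar\Psi)$ satisfies the hypotheses of Proposition \ref{prop-banah-BSDE} with $p=2$. The key observation is that for fixed $(\omega,t)$ the translation $\Theta^\sigma_t$ is an isometry of $H^n_2$: it commutes with the Fourier multiplier $(1-\Delta)^{n/2}$ and preserves the Lebesgue measure, so $\|\Theta^\sigma_t h\|_{n,2}=\|h\|_{n,2}$. Consequently $\Theta^\sigma$ is an isometric bijection of $\cL^2(H^n_2)$, $\bH^n_{2,2}$ and $\bH^n_{2,\infty}$ (this is the norm-equivalence \eqref{eq-equiv-norm}, upgraded to $H^n_2$), and $\bar\Psi\in L^2(\Omega,\sF_T;H^n_2)$ with $E\|\bar\Psi\|^2_{n,2}=E\|\Psi\|^2_{n,2}$. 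Because the time-restriction $1_{[t,T]}$ commutes with the purely spatial shift, transporting the Lipschitz bound on $F$ through this isometry gives that $\bar F$ inherits the very same Lipschitz constant $L_2$, together with the required predictability, while $\|\bar F(\cdot,\cdot,0,0)\|_{\cL^2(H^n_2)}=\|F(\cdot,\cdot,0,0)\|_{\cL^2(H^n_2)}$.

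Proposition \ref{prop-banah-BSDE} then furnishes a unique $(\bar u,\bar\psi)\in(\cL^2(H^n_2)\cap\bH^n_{2,\infty})\times(\bH^n_{2,2})^m$ satisfying \eqref{Estimate of bsde}. Undoing the shift, $u(t,\cdot)=\Theta^{-\sigma}_t\bar u(t,\cdot)\in\cL^2(H^n_2)$ and $\psi+Du\sigma=\Theta^{-\sigma}\bar\psi\in\cL^2((H^n_2)^m)$; since $u\in\cL^2(H^n_2)$ and $U$ is bounded, $Du\sigma\in\cL^2((H^{n-1}_2)^m)$, and subtracting yields $\psi\in\cL^2((H^{n-1}_2)^m)$, which is the asserted regularity. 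The isometry transports \eqref{Estimate of bsde} into the stated estimate, the $\sup_t$ term being exactly $\|\bar u\|_{\bH^n_{2,\infty}}$ and, for $p=2$, $\|\bar\psi\|_{\bH^n_{2,2}}$ coinciding with $\|\psi+Du\sigma\|_{\cL^2((H^n_2)^m)}$ by Tonelli.

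I expect the principal obstacle to be the rigorous justification of the It\^o-Wentzell transformation for the nonlinear functional $F$: one must check that the drift and diffusion coefficients of the BSPDE lie in $\mathfrak{D}^1$ and $\mathfrak{D}^2$ respectively so that Theorem \ref{Ito-Wentzell} genuinely applies, and, more delicately, that $\bar F$ is a well-defined functional on the shifted spaces whose Lipschitz property is truly inherited from $F$. It is precisely here that the isometry of $\Theta^\sigma$ on $H^n_2$ does the essential work, so I would establish that identification carefully before invoking Proposition \ref{prop-banah-BSDE}; the remaining translation between the two weak formulations is then routine.
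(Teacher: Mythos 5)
Your proposal is correct and follows exactly the route the paper intends: the corollary is stated there as an immediate consequence of Theorem \ref{Ito-Wentzell} and Proposition \ref{prop-banah-BSDE}, i.e.\ precisely the shift $x\mapsto x+X^{\sigma}_t$ reducing the degenerate BSPDE to the trivial Banach space-valued BSDE, with the translation invariance of the $H^n_2$ norm (the $H^n_2$ upgrade of \eqref{eq-equiv-norm}) transporting the Lipschitz hypothesis and the estimate back and forth. Your write-up merely makes explicit the details the paper leaves implicit.
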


\begin{rmk}
Corollary \ref{cor-degenerate-BSPDE} seems to be an interesting complement  for the existing literature on  degenerate BSPDEs (for instance, see \cite{DuTangZhang-2013,Horst-Qiu-Zhang-14,Hu_Ma_Yong02,ma1997adapted,ma1999linear}). Indeed, $n$ herein is allowed to be any real number and for the solution, $(1-\Delta)^{n/2}u(t,x+X_t^{\sigma})$ is $\bP\otimes dx$-a.e. time-continuous, and especially, when $n=0$, the associated estimate on $u(t,x+X_t^{\sigma})$ leads to the $\sigma$-quasi-continuity of $u$ in Proposition \ref{prop-quasi-cont-BSPDE}.
\end{rmk}

\end{appendix}

\bibliographystyle{siam}

\begin{thebibliography}{10}

\bibitem{Bensousan_83}
{\sc A.~Bensoussan}, {\em Maximum principle and dynamic programming approaches
  of the optimal control of partially observed diffusions}, Stoch., 9 (1983),
  pp.~169--222.

\bibitem{blumenthal-1968-Markov-Potential}
{\sc R.~M. Blumenthal and R.~K. Getoor}, {\em Markov {Processes and Potential
  Theory}}, Pure and Applied Mathematics, 29 (1968).

\bibitem{ChangPangYong-2009}
{\sc M.~Chang, Y.~Pang, and J.~Yong}, {\em Optimal stopping problem for
  stochastic differential equations with random coefficients}, \textrm{SIAM} J.
  Control Optim., 48 (2009), pp.~941--971.

\bibitem{coron-lions-1986-remark}
{\sc J.-M. Coron and P.-L. Lions}, {\em A remark on one-dimensional controlled
  diffusion processes}, Stoch., 18 (1986), pp.~73--81.

\bibitem{crandall-1992-usr-gude}
{\sc M.~G. Crandall, H.~Ishii, and P.-L. Lions}, {\em User's guide to viscosity
  solutions of second order partial differential equations}, Bull. Amer. Math.
  Soc., 27 (1992), pp.~1--67.

\bibitem{crandall2000lp}
{\sc M.~G. Crandall, M.~Kocan, and A.~{\'S}wiech}, {\em L$^p$-theory for fully
  nonlinear uniformly parabolic equations}, Commun. Partial Differ. Equ., 25
  (2000), pp.~1997--2053.

\bibitem{DuQiuTang10}
{\sc K.~Du, J.~Qiu, and S.~Tang}, {\em $\textrm{L}^p$ theory for
  super-parabolic backward stochastic partial differential equations in the
  whole space}, Appl. Math. Optim., 65 (2011), pp.~175--219.

\bibitem{DuTang2010}
{\sc K.~Du and S.~Tang}, {\em Strong solution of backward stochastic partial
  differential equations in $\textrm{C}^2$ domains}, Probab. Theory Relat.
  Fields, 154 (2012), pp.~255--285.

\bibitem{DuTangZhang-2013}
{\sc K.~Du, S.~Tang, and Q.~Zhang}, {\em $\textrm{W}^{m,p}$-solution ($p\ge 2$)
  of linear degenerate backward stochastic partial differential equations in
  the whole space}, J. Differ. Equ., 254 (2013), pp.~2877--2904.

\bibitem{EnglezosKaratzas09}
{\sc N.~Englezos and I.~Karatzas}, {\em Utility maximization with habit
  formation: Dynamic programming and stochastic \textrm{PDEs}}, \textrm{SIAM}
  J. Control Optim., 48 (2009), pp.~481--520.

\bibitem{El_Karoui-reflec-1997}
{\sc N.~E\textrm{l Karoui}, C.~Kapoudjian, E.~Paudoux, S.~Peng, and M.~C.
  Quenez}, {\em Reflected solutions of backward \textrm{SDE's}, and related
  obstacle problems for \textrm{PDE's}}, Ann. Probab., 25 (1997), pp.~702--737.

\bibitem{Karoui_Peng_Quenez}
{\sc N.~E\textrm{l Karoui}, S.~Peng, and M.~C. Quenez}, {\em Backward
  stochastic differential equations in finance}, Math. Finance, 7 (1997),
  pp.~1--71.

\bibitem{Flem-Soner-2006controlled}
{\sc W.~H. Fleming and H.~M. Soner}, {\em Controlled {Markov} Processes and
  Viscosity Solutions}, vol.~25, Springer, New York, 2006.

\bibitem{fukushima-2010-dirichlet}
{\sc M.~Fukushima, Y.~Oshima, and M.~Takeda}, {\em Dirichlet Forms and
  Symmetric {Markov} Processes}, vol.~19, Walter de Gruyter, 2010.

\bibitem{GraeweHorstQui13}
{\sc P.~Graewe, U.~Horst, and J.~Qiu}, {\em A non-markovian liquidation problem
  and backward {SPDEs} with singular terminal conditions}.
\newblock to appear in {SIAM} J. Control Optim., 2014.

\bibitem{Horst-Qiu-Zhang-14}
{\sc U.~Horst, J.~Qiu, and Q.~Zhang}, {\em A constrained control problem with
  degenerate coefficients and associated backward {SPDEs} with singular
  terminal condition}.
\newblock arXiv:1407.0108 [q-fin.MF], 2014.

\bibitem{Hu_Ma_Yong02}
{\sc Y.~Hu, J.~Ma, and J.~Yong}, {\em On semi-linear degenerate backward
  stochastic partial differential equations}, Probab. Theory Relat. Fields, 123
  (2002), pp.~381--411.

\bibitem{Hu_Peng_91}
{\sc Y.~Hu and S.~Peng}, {\em Adapted solution of a backward semilinear
  stochastic evolution equations}, Stoch. Anal. Appl., 9 (1991), pp.~445--459.

\bibitem{Karatz-Shreve-1998MMF}
{\sc I.~Karatzas and S.~E. Shreve}, {\em Methods of Mathematical Finance},
  vol.~39, Springer, New York, 1998.

\bibitem{krylov-1987}
{\sc N.~V. Krylov}, {\em Nonlinear Elliptic and Parabolic Equations of the
  Second Order}, D. Reidel, Dordrecht, 1987.

\bibitem{Krylov_09}
\leavevmode\vrule height 2pt depth -1.6pt width 23pt, {\em On the
  \textrm{I}t$\hat{\textrm{o}}$-\textrm{W}entzell formula for
  distribution-valued processes and related topics}, Probab. Theory Relat.
  Fields, 150 (2010), pp.~295--319.

\bibitem{kunita1981some}
{\sc H.~Kunita}, {\em Some extensions of {Ito's} formula}, in S{\'e}minaire de
  Probabilit{\'e}s XV 1979/80, Springer, 1981, pp.~118--141.

\bibitem{lions-1983-HJB}
{\sc P.-L. Lions}, {\em On the {Hamilton-Jacobi-Bellman} equations}, Acta Appl.
  Math., 1 (1983), pp.~17--41.

\bibitem{ma2012non}
{\sc J.~Ma, H.~Yin, and J.~Zhang}, {\em On non-{M}arkovian forward--backward
  {SDEs} and backward stochastic {PDEs}}, Stoch. Process. Appl., 122 (2012),
  pp.~3980--4004.

\bibitem{ma1997adapted}
{\sc J.~Ma and J.~Yong}, {\em Adapted solution of a degenerate backward {SPDE},
  with applications}, Stoch. Process. Appl., 70 (1997), pp.~59--84.

\bibitem{ma1999linear}
\leavevmode\vrule height 2pt depth -1.6pt width 23pt, {\em On linear,
  degenerate backward stochastic partial differential equations}, Probab.
  Theory Relat. Fields, 113 (1999), pp.~135--170.

\bibitem{MatoussiStoica2010}
{\sc A.~Matoussi and L.~Stoica}, {\em The obstacle problem for quasilinear
  stochastic pdes}, Ann. Probab., 38 (2010), pp.~1143--1179.

\bibitem{Oksend-Sulem-Zhang-2011}
{\sc B.~{\O}ksendal, A.~Sulem, and T.~Zhang}, {\em Singular control of
  \textrm{SPDEs} and backward \textrm{SPDEs} with reflection},  (2011).
\newblock http://hal.inria.fr/docs/00/63/95/50/PDF/RR-7791.pdf.

\bibitem{Pardoux1979}
{\sc E.~Pardoux}, {\em Stochastic partial differential equations and filtering
  of diffusion processes}, Stoch.,  (1979), pp.~127--167.

\bibitem{ParPeng_90}
{\sc E.~Pardoux and S.~Peng}, {\em Adapted solution of a backward stochastic
  differential equation}, Syst. Control Lett., 14 (1990), pp.~55--61.

\bibitem{Peng_92}
{\sc S.~Peng}, {\em Stochastic {H}amilton-{J}acobi-{B}ellman equations}, {SIAM}
  J. Control Optim., 30 (1992), pp.~284--304.

\bibitem{Peng-DPP-1997}
{\sc S.~Peng}, {\em Backward stochastic differential equations: Stochastic
  optimization theory and viscosity solution for {HJB} equations}, in Topics on
  Stochastic Analysis (in Chinese), J.-A. Yan, S.~Peng, S.~Fang, and L.~Wu,
  eds., Science Press, Beijing, 1997.

\bibitem{peng1999open}
{\sc S.~Peng}, {\em Open problems on backward stochastic differential
  equations}, in Control of Distributed Parameter and Stochastic Systems,
  Springer, 1999, pp.~265--273.

\bibitem{peng2011backward}
\leavevmode\vrule height 2pt depth -1.6pt width 23pt, {\em Backward stochastic
  differential equation, nonlinear expectation and their applications}, in
  Proceedings of the International Congress of Mathematicians, 2010,
  pp.~393--432.

\bibitem{QiuTangBDSDES2010}
{\sc J.~Qiu and S.~Tang}, {\em On backward doubly stochastic differential
  evolutionary system},  (2010).
\newblock arXiv:1309.4152 [math.FA].

\bibitem{QiuTangMPBSPDE11}
\leavevmode\vrule height 2pt depth -1.6pt width 23pt, {\em Maximum principles
  for backward stochastic partial differential equations}, J. Funct. Anal., 262
  (2012), pp.~2436--2480.

\bibitem{QiuTangYou-SPA-2011}
{\sc J.~Qiu, S.~Tang, and Y.~You}, {\em \textrm{2D} backward stochastic
  \textrm{Navier-Stokes} equations with nonlinear forcing}, Stoch. Process.
  Appl., 122 (2012), pp.~334--356.

\bibitem{QiuWei-RBSPDE-2013}
{\sc J.~Qiu and W.~Wei}, {\em On the quasi-linear reflected backward stochastic
  partial differential equations}, J. Funct. Anal., 267 (2014), pp.~3598--3656.

\bibitem{Tang_98}
{\sc S.~Tang}, {\em The maximum principle for partially observed optimal
  control of stochastic differential equations}, \textrm{SIAM} J. Control
  Optim., 36 (1998), pp.~1596--1617.

\bibitem{Tang-Wei-2013}
{\sc S.~Tang and W.~Wei}, {\em On the {Cauchy} problem for backward stochastic
  partial differential equations in {H\"{o}lder} spaces},  (2014).
\newblock to appear in {Ann. Probab.}, arXiv:1304.5687v1 [math.AP].

\bibitem{wang1992-I}
{\sc L.~Wang}, {\em On the regularity theory of fully nonlinear parabolic
  equations: {I}}, Commun. Pure Appl. Math., 45 (1992), pp.~27--76.

\bibitem{wang1992-II}
\leavevmode\vrule height 2pt depth -1.6pt width 23pt, {\em On the regularity
  theory of fully nonlinear parabolic equations: {II}}, Commun. Pure Appl.
  Math., 45 (1992), pp.~141--178.

\bibitem{Tang-Yang-2011}
{\sc Z.~Yang and S.~Tang}, {\em Dynkin game of stochastic differential
  equations with random coefficients, and associated backward stochastic
  partial differential variational inequality}, \textrm{SIAM} J. Control
  Optim., 51 (2013), pp.~64--95.

\bibitem{Zhou_93}
{\sc X.~Zhou}, {\em On the necessary conditions of optimal controls for
  stochastic partial differential equations}, \textrm{SIAM} J. Control Optim.,
  31 (1993), pp.~1462--1478.

\end{thebibliography}

\end{document}